\theoremstyle{plain}
  \newtheorem{thm}{Theorem}[section]
  \newtheorem{prop}[thm]{Proposition}
  \newtheorem{lem}[thm]{Lemma}
  \newtheorem{cor}[thm]{Corollary}
\theoremstyle{definition}
  \newtheorem{dfn}[thm]{Definition}
  \newtheorem{exmp}[thm]{Example}
  \newtheorem{ques}[thm]{Question}
  \newtheorem{rem}[thm]{Remark}
\numberwithin{equation}{section}
\let\opn\operatorname 
\newcommand\ba{\mathbf{a}}
\renewcommand\iff{\Longleftrightarrow}
\newcommand\NN{\mathbb{N}}
\newcommand\ZZ{\mathbb{Z}}
\newcommand\cA{\mathcal{A}}
\newcommand\cB{\mathcal{B}}
\newcommand\cF{\mathcal{F}}
\newcommand\cP{\mathcal{P}}
\newcommand\fb{\mathfrak b}
\newcommand\kk{\Bbbk}
\newcommand\wS{\widetilde{S}}
\newcommand\wI{\widetilde{I}}
\newcommand\wF{\widetilde{F}}
\newcommand\wQ{\widetilde{Q}}
\newcommand\wP{\widetilde{P}}
\newcommand\wg{\widetilde{g}}
\newcommand\BoX{\opn{\mathsf{b-pol}}}
\newcommand\chara{\opn{char}}
\newcommand\lcm{\opn{lcm}}
\newcommand\m{\mathsf{m}}
\newcommand\sq{\mathsf{sq}}
\newcommand\wm{\widetilde{\m}}
\newcommand\n{\mathsf{n}}
\newcommand\wn{\widetilde{\n}}
\newcommand\z{X}
\newcommand\too{\longrightarrow}
\newcommand\rmv{\opn{rm}}
\newcommand\ga{\alpha}
\newcommand\gl{\lambda}
\newcommand\gd{\delta}
\newcommand\gs{\sigma}
\newcommand\gt{\tau}
\newcommand\longto{\longrightarrow}
\newcommand\diff{\partial}
\let\wt\widetilde
\newcommand\void\varnothing
\def\<{\langle}
\def\>{\rangle}
\newcommand\bra[1]{\left[ #1\right]}
\let\bpol\BoX
\newcommand\bdeg[1]{\mathbf\deg}
\newcommand\Con{\opn{Con}}
\newcommand\embto{\hookrightarrow}
\newcommand\LCM{\opn{LCM}}
\title[Eliahou-Kervaire type resolution]{Alternative polarizations of Borel fixed ideals, Eliahou-Kervaire type resolution and \\
discrete Morse theory}
\author{Ryota Okazaki}
\thanks{The first author is partially supported by JST, CREST}
\address{Department of Pure and Applied Mathematics, 
Graduate School of Information Science and Technology,
Osaka University, Toyonaka, Osaka 
560-0043, Japan}
\email{r-okazaki@cr.math.osaka-u.ac.jp}
\author{Kohji Yanagawa}
\thanks{The second author is partially supported by Grant-in-Aid for Scientific Research (c) (no.22540057).}
\address{Department of Mathematics, Kansai University,
Suita 564-8680, Japan}
\email{yanagawa@ipcku.kansai-u.ac.jp}
\begin{document}
\maketitle
\begin{abstract}
We construct an Eliahou-Kervaire-like minimal free resolution of the alternative polarization $\BoX(I)$
of a Borel fixed ideal $I$. It yields new descriptions of the minimal free resolutions  
of $I$ itself and $I^\sq$, where $(-)^\sq$ is the squarefree operation in the shifting theory. 
These resolutions are cellular, 
and the (common) supporting cell complex is given by discrete Morse theory. 
If $I$ is generated in one degree, our description is equivalent to that of Nagel and Reiner. 
\end{abstract}
\section{Introduction}
Let $S:=\kk[x_1, \ldots, x_n]$ be a polynomial ring over a field $\kk$. 
For a monomial ideal $I \subset S$, $G(I)$ denotes the set of minimal (monomial) generators of $I$. 
We say a monomial ideal $I \subset S$ is {\it Borel fixed} (or {\it strongly stable}), if  
$\m \in G(I)$, $x_i|\m$ and $j <i$  imply $(x_j/x_i) \cdot \m \in I$.    
Borel fixed ideals are important, since they appear as the {\it generic initial ideals} 
of homogeneous ideals (if $\chara(\kk)=0$).  

As shown in \cite{AHH2}, a squarefree analog of a Borel fixed ideal is also important in combinatorial commutative algebra. 
We say a squarefree monomial ideal $I$ is {\it squarefree strongly stable},   
if  $\m \in G(I)$, $x_i|\m$, $x_j \! \not | \m$ and $j <i$  imply $(x_j/x_i) \cdot \m \in I$.
    
Any monomial $\m \in S$ with $\deg(\m) =e$ has a unique expression 
\begin{equation}\label{alpha expression}
\m = \prod_{i=1}^e x_{\alpha_i} \quad \text{with} \quad 1 \le \alpha_1 \le \alpha_2 \le \cdots 
\le \alpha_e \le n. 
\end{equation}
Now we can consider the squarefree monomial 
$$\m^\sq = \prod_{i=1}^e x_{\alpha_i+i-1}$$
in the ``larger" polynomial ring $T=\kk[x_1, \ldots, x_N]$ with $N \gg 0$. If $I \subset S$ is Borel fixed, then 
$$I^\sq:= ( \, \m^\sq \mid \m \in G(I)\, ) \subset T$$
is squarefree strongly stable. 
Moreover, for a Borel fixed ideal $I$ and all $i,j$, we have 
$\beta_{i,j}^S(I)=\beta_{i,j}^T(I^\sq)$.  See \cite{AHH2} for further information. 

A minimal free resolution of a Borel fixed ideal $I$ has been constructed   
by Eliahou and Kervaire \cite{EK}. 
While the minimal free resolution is unique up to isomorphism, 
its ``description" depends on the choice of a free basis, and further analysis of the minimal free resolution 
is still an interesting problem. See, for example,  \cite{BW,HT,JW,Mer,NR}.  
In this paper, we will give a new approach which is applicable to both $I$ and $I^\sq$. 
Our main tool is the ``alternative" polarization $\BoX(I)$ of $I$.

Let $\wS:=\kk[ \, x_{i,j} \mid 1 \le i \le n, \, 1 \le j \le d \, ]$ be the polynomial ring, and  set
$$\Theta := \{ x_{i,1}-x_{i,j} \mid 1 \leq i \leq n, \, 2 \leq j \leq d \, \} \subset \wS.$$ 
Then there is an isomorphism  $\wS/(\Theta) \cong S$ induced by  
$\wS \ni x_{i,j} \longmapsto x_i \in S$. 
Throughout this paper, $\wS$ and $\Theta$ are used in this meaning. 

Assume that $\m \in G(I)$ has the expression \eqref{alpha expression}. If $\deg (\m) \, (=e) \le d$, we set
\begin{equation}\label{b-pol}
\BoX(\m)= \prod_{i=1}^e x_{\alpha_i, i} \in \wS.
\end{equation} 
Note that $\BoX(\m)$ is a squarefree monomial.  
If there is no danger of confusion, $\BoX(\m)$ is denoted by $\wm$.  
If $\m=\prod_{i=1}^nx_i^{a_i}$, then we have 
$$\wm \ (=\BoX(\m)) \, =\prod_{\substack{1 \leq i \leq n \\b_{i-1}+1 \leq j \leq b_i }}x_{i,j} \in \wS, 
 \quad \text{where} \quad b_i:=\sum_{l=1}^ia_l.$$
If $\deg(\m) \le d$ for all $\m \in G(I)$, we set 
$$\BoX(I):= (\BoX(\m) \mid \m \in G(I) )\subset \wS.$$ 

Extending a result due to Nagel and Reiner (\cite{NR}), the second author showed that if $I$ is Borel fixed, then $\wI:=\BoX(I)$ is a polarization of $I$
(\cite[Theorem 3.4]{y10}), that is, 
$\Theta$ forms an $\wS/\wI$-regular sequence with the natural isomorphism $$\wS/(\wI+(\Theta)) \cong S/I.$$
(In the present paper, we give a new proof of this fact. See Corollary~\ref{new proof}.)   
Note that the construction of $\BoX(-)$ is different from the standard polarization. 
In fact, it does not give a polarization for a general monomial ideal. 

Moreover
$$
\Theta'= \{ \, x_{i,j} -x_{i+1, j-1} \mid 1 \le i <n,   1< j \le d \, \} \subset \wS
$$
forms an $\wS/\wI$-regular sequence too, and we have  $$\wS/(\wI+(\Theta')) \cong T/I^\sq$$
through $\wS \ni x_{i,j} \longmapsto x_{i+j-1} \in T$ (if we adjust the value of 
$N= \dim T$). The equation $\beta_{i,j}^S(I)=\beta_{i,j}^T(I^\sq)$ mentioned above easily follows from this observation.

In \S2, we will construct a minimal $\wS$-free resolution $\wP_\bullet$ of $\wS/\wI$, 
which is analogous to the  Eliahou-Kervaire resolution of $S/I$. 
However, their description can {\it not} be lifted to $\wI$, and we need modification. 
Roughly speaking,  for $\m \in G(I)$ and $i$ with $i < \max \{ \, l  \mid \text{$x_l$ divides $\m$}\, \} =:\nu(\m)$, 
we use the operation $\m \longmapsto (x_i/x_k)\cdot \m$ 
(and take $\BoX(-)$ of both sides), where $k =\min \{ \, l > i \mid \text{$x_l$ divides $\m$}\, \}$. 
Recall that  Eliahou and Kervaire use  the operation $\m \longmapsto (x_i/x_{\nu(\m)})\cdot \m$. 
Clearly, $\wP_\bullet \otimes_{\wS} \wS/(\Theta)$ and  $\wP_\bullet \otimes_{\wS} \wS/(\Theta')$ 
give the minimal free resolutions of $S/I$ and $T/I^\sq$ respectively. 
We also remark that the method of Herzog and Takayama (\cite[Theorem~1.12]{HT}) is not applicable to our case, 
while $\wI$ is a variant of the ideals treated there. 

Under the assumption that a Borel fixed ideal $I$ is generated in one degree (i.e., all elements of $G(I)$ have the same degree), 
Nagel and Reiner \cite{NR} constructed the alternative polarization $\wI=\BoX(I)$ of $I$, and described  
a minimal $\wS$-free resolution of $\wI$ explicitly (and induced minimal free resolution of $I$ itself and $I^\sq$). 
More precisely, they gave a polytopal CW complex supporting a minimal free resolution of $\wI$.  
In \S4, we will show that their resolution is equivalent to our description. 
In this sense,  our results are generalizations of those for \cite{NR}.

Batzies and Welker (\cite{BW}) developed a strong theory which tries to construct minimal  
free resolutions of monomial ideals using Forman's {\it discrete Morse theory} (\cite{F}). 
If a monomial ideal $J$ is {\it shellable} in the sense of \cite{BW} (i.e., has {\it linear quotients}, 
in the sense of \cite{HT}), their method is applicable to $J$, and we can get a {\it Batzies-Welker type} minimal free resolution.   
Their resolution depends on the choice of an {\it acyclic matching} on $2^{G(J)}$, and the matchings are far from unique in general.  
For most matchings, it is almost impossible to compute the differential map explicitly.   

A Borel fixed ideal $I$ and the polarization $\wI=\BoX(I)$ are shellable.  
In \S5, we will show that our resolution  $\wP_\bullet$ of $\wS/\wI$ and 
the induced resolutions of $S/I$ and $T/I^\sq$ are Batzies-Welker type. 
In particular, these resolutions are cellular. 
As far as the authors know, an {\it explicit} description of a Batzies-Welker type resolution 
of a general Borel fixed ideal has never been obtained before.

Theoretically, \S3, in which we show directly that $\wP_\bullet$ is a resolution, 
is unnecessary, and \S5 is enough.  
However the technique developed in \cite{BW} is quite different from the usual one in this area. 
If we give only the proof based on \cite{BW}, the paper might become unreadable to novice readers. 
Therefore we keep \S3. 

If $I$ is generated in one degree, the CW complex supporting our resolution $\wP_\bullet$ is regular. 
In fact, it coincides with the CW complex of Nagel and Reiner. 
We strongly believe that our CW complex is regular in general, but there is no way to prove it now.

\section{The Eliahou-Kervaire type resolution of \texorpdfstring{$\wS/\BoX(I)$}{b-pol}}
Throughout the rest of the paper, $I$ is a Borel fixed monomial ideal with $\deg \m \le d$ for all $\m \in G(I)$. 
For the definitions of the alternative polarization $\BoX(I)$ of $I$ and related concepts, consult the previous section.  
For a monomial $\m = \prod_{i=1}^n x_i^{a_i} \in S$, set $\mu(\m):= \min \{ \, i \mid a_i > 0\, \}$ and $\nu(\m):= \max\{ \, i \mid a_i > 0\, \}$. 
In \cite{EK}, it is shown that any monomial $\m \in I$ 
has a unique expression $\m= \m_1 \cdot \m_2$ with $\nu(\m_1) \le \mu(\m_2)$ and $\m_1 \in G(I)$.  
Following \cite{EK}, we set $g(\m):=\m_1$. 

For a monomial $\m \in S$ and $i$ with $i < \nu(\m)$, set   
$$\fb_i(\m)=(x_i/x_k) \cdot \m, \ \text{where} \  k:= \min \{ \, j  \mid a_j > 0, \, j > i  \}.$$  
Since $I$ is Borel fixed,  $\m \in I$ implies $\fb_i(\m) \in I$.  

\begin{dfn}\label{admissible2} 
For a finite subset $\wF=\{ \, (i_1,j_1), (i_2, j_2), \ldots, (i_q, j_q) \, \}$ of $\NN \times \NN$ 
and a monomial $\m =\prod_{i=1}^e x_{\alpha_i} = \prod_{j=1}^n x_j^{a_j} \in G(I)$ with $1 \le \alpha_1 \le \alpha_2 \le \cdots \le \alpha_e \le n$, 
we say the pair $(\wF, \wm)$ is {\it admissible} (for $\BoX(I)$), if the following 
conditions are satisfied: 
\begin{itemize}
\item[(a)] $1 \le i_1 < i_2 < \cdots < i_q < \nu(\m)$, 
\item[(b)] $j_r = \max \{ \, l \mid \alpha_l \le i_r\, \} +1$ (equivalently, $j_r=1+\sum_{l=1}^{i_r} a_l$) for all $r$. 
\end{itemize}
For $\m \in G(I)$, the pair $(\emptyset, \wm)$ is also admissible. 
\end{dfn}

The following lemma is easy, and we omit the proof.  

\begin{lem}\label{rem for adm}
Let $(\wF, \wm)$ be an admissible pair with $\wF=\{ \, (i_1,j_1), \ldots, (i_q, j_q) \, \}$ 
and $\m=\prod x_i^{a_i} \in G(I)$. Then we have the following.
\begin{itemize}
\item[(i)] $j_1 \le j_2 \le \cdots \le j_q$.
\item[(ii)] If $r < \nu(\m)$, then $x_{k, j_r} \cdot \BoX(\fb_{i_r}(\m))= x_{i_r, j_r} \cdot \BoX(\m)$, where 
$k=\min \{ \, l \mid l> i_r, a_l > 0 \, \}$.  
\end{itemize}
\end{lem}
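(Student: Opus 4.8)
The lemma has two parts, and both should follow by unwinding the definitions.

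For part (i), I would argue directly from condition (b) of Definition~\ref{admissible2}. Since $\wF$ is admissible we have $i_1 < i_2 < \cdots < i_q$ by (a), and $j_r = 1+\sum_{l=1}^{i_r} a_l$ by (b). The partial sums $\sum_{l=1}^{i} a_l$ are non-decreasing in $i$ (the $a_l$ are non-negative), so $i_r < i_{r+1}$ forces $j_r \le j_{r+1}$. That is the whole argument; no obstacle here.

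For part (ii), write $\m = \prod_{i=1}^n x_i^{a_i}$ and fix $r$ with (presumably) $i_r < \nu(\m)$ — note the statement says ``$r<\nu(\m)$'' but this must mean $i_r < \nu(\m)$, since otherwise $\fb_{i_r}(\m)$ is not even defined; I would state it that way. Set $k = \min\{\, l \mid l > i_r,\ a_l > 0\,\}$, which exists because $i_r < \nu(\m)$. Then $\fb_{i_r}(\m) = (x_{i_r}/x_k)\cdot\m$ has exponent vector $(a_1,\dots,a_{i_r}+1,\dots,a_k-1,\dots,a_n)$. The plan is to compute $\BoX$ of both $\m$ and $\fb_{i_r}(\m)$ using the product formula $\BoX\bigl(\prod x_i^{c_i}\bigr) = \prod_{b_{i-1}+1\le j\le b_i} x_{i,j}$ with $b_i = \sum_{l=1}^i c_l$, and compare. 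Incrementing $a_{i_r}$ by $1$ and decrementing $a_k$ by $1$ changes only the partial sums $b_l$ for $i_r \le l < k$, each of which goes up by exactly $1$. Tracking which variables $x_{l,j}$ appear, one sees that $\BoX(\m)$ and $\BoX(\fb_{i_r}(\m))$ share all factors except that $\BoX(\m)$ contains $x_{i_r, j}$ for $j$ in a certain range ending at $b_{i_r} = \sum_{l=1}^{i_r} a_l$, while $\BoX(\fb_{i_r}(\m))$ contains instead one extra factor $x_{i_r, b_{i_r}+1}$ at position $i_r$ and is missing the factor $x_{k, b_k}$ at position $k$ (here $b_k = \sum_{l=1}^k a_l$ computed from $\m$). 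Since $b_{i_r}+1 = 1 + \sum_{l=1}^{i_r} a_l = j_r$ by admissibility, multiplying $\BoX(\fb_{i_r}(\m))$ by $x_{i_r, j_r}$ restores the missing-at-$k$ factor's counterpart; more precisely one checks $x_{k, j_r}\cdot \BoX(\fb_{i_r}(\m)) = x_{i_r, j_r}\cdot \BoX(\m)$ by verifying the two monomials have the same support.

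The only mildly delicate point — and the place to be careful rather than a genuine obstacle — is bookkeeping the indices $j$ at positions $i_r$ and $k$: one must confirm that $x_{i_r, j_r}$ does \emph{not} already divide $\BoX(\fb_{i_r}(\m))$ (so that the left side is squarefree and the claimed equality of \emph{monomials}, not just of their radicals, holds) and that $x_{k, j_r}$ divides $\BoX(\m)$ with $j_r$ lying in the range $[b_{k-1}+1, b_k]$ for $\m$. Both follow from $b_{i_r} < j_r \le b_{i_r+1}^{\m} \le \cdots$, using that $k$ is the \emph{next} index after $i_r$ with positive exponent, so $b_l = b_{i_r}$ for $i_r \le l < k$ and hence $j_r = b_{i_r}+1 = b_{k-1}+1 \le b_k$. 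Once this is pinned down, the equality is immediate, which is presumably why the authors call the lemma ``easy'' and omit the proof.
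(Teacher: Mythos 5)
The paper itself omits the proof of this lemma (it is declared easy), so the only issue is whether your argument stands on its own. Part (i) is fine, and for part (ii) your strategy --- compute $\BoX(\m)$ and $\BoX(\fb_{i_r}(\m))$ from the block formula $\BoX\bigl(\prod x_i^{c_i}\bigr)=\prod_{b_{i-1}+1\le j\le b_i}x_{i,j}$ and compare the blocks --- is exactly the intended one, and your key observation that $b_l=b_{i_r}$ for $i_r\le l<k$, hence $j_r=b_{i_r}+1=b_{k-1}+1\le b_k$, is correct. Two statements in the middle of your write-up are wrong as written, however, and should be repaired. First, the factor of $\BoX(\m)$ at position $k$ that disappears in $\BoX(\fb_{i_r}(\m))$ is the bottom one of the $k$-th block, $x_{k,\,b_{k-1}+1}=x_{k,j_r}$, not the top one $x_{k,b_k}$: passing from $\m$ to $\fb_{i_r}(\m)$ raises the partial sums $b'_l=b_l+1$ exactly for $i_r\le l<k$, so the block at $k$ shrinks from $\{j_r,\dots,b_k\}$ to $\{j_r+1,\dots,b_k\}$ while its top end is unchanged; your version agrees with the truth only when $a_k=1$, and if taken literally it would yield $x_{k,b_k}\cdot\BoX(\fb_{i_r}(\m))=x_{i_r,j_r}\cdot\BoX(\m)$, which is false in general. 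Second, the squarefreeness remark: $x_{i_r,j_r}$ \emph{does} divide $\BoX(\fb_{i_r}(\m))$ --- it is precisely the new factor created at position $i_r$, since that block grows to $\{b_{i_r-1}+1,\dots,b_{i_r}+1\}$ and $b_{i_r}+1=j_r$. What is true (and presumably what you meant) is that $x_{i_r,j_r}\nmid\BoX(\m)$ and $x_{k,j_r}\nmid\BoX(\fb_{i_r}(\m))$; but no such check is needed, since the claimed identity $x_{k,j_r}\cdot\BoX(\fb_{i_r}(\m))=x_{i_r,j_r}\cdot\BoX(\m)$ is an equality of monomials read off directly from the two block decompositions, squarefree or not. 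With these two corrections the proof is complete; and your reading of the hypothesis ``$r<\nu(\m)$'' as $i_r<\nu(\m)$ is the right one (for an admissible pair it is automatic by condition (a), and it is what guarantees that $k$ exists).
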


For $\m \in G(I)$ and an integer $i$ with $1 \le i < \nu(\m)$, 
set $\m_{\< i\>}:= g(\fb_i(\m))$ and $\wm_{\< i\>}:=\BoX(\m_{\<i\>})$. 
If $i \ge \nu(\m)$, we set $\m_{\< i \>}:=\m$ and $\wm_{\< i\>} := \wm$
for convenience.

In the situation of Lemma~\ref{rem for adm} (ii), $\wm_{\<i_r\>}$ divides $x_{i_r,j_r} \cdot \wm$ 
for all $1 \le r \le q$. 

\begin{lem}\label{m_i_r}
With the above notation, $\m_{\<i\>}$ and $\fb_{i}(\m)$ have the same exponents 
in the variables $x_k$ with $k \le i$. In particular, $\nu(\m_{\<i\>}) \ge i$. 
\end{lem}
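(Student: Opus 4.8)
The plan is to argue directly with the Eliahou--Kervaire factorization of $\fb_i(\m)$. Since $I$ is Borel fixed and $i<\nu(\m)$, the monomial $\fb_i(\m)=(x_i/x_k)\m$ (with $k=\min\{j>i\mid a_j>0\}$) lies in $I$, so it has the unique expression $\fb_i(\m)=\m_{\<i\>}\cdot w$ with $\m_{\<i\>}=g(\fb_i(\m))\in G(I)$, $w$ a monomial (possibly $1$), and $\nu(\m_{\<i\>})\le\mu(w)$. The first step is to observe that the whole assertion is equivalent to the single inequality $\mu(w)>i$: if $w$ is divisible by no variable $x_l$ with $l\le i$, then $\m_{\<i\>}$ and $\fb_i(\m)=\m_{\<i\>}\,w$ carry the same exponent of every such $x_l$; and since the $x_i$-exponent of $\fb_i(\m)$ is $a_i+1\ge1$, this same observation forces $x_i\mid\m_{\<i\>}$, i.e.\ $\nu(\m_{\<i\>})\ge i$, which is the ``in particular'' clause. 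So everything reduces to excluding the possibility $\mu(w)\le i$.

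So I would assume $\mu(w)\le i$ and derive a contradiction; note this automatically gives $\nu(\m_{\<i\>})\le\mu(w)\le i$ as well. The key identity is $\fb_i(\m)/x_i=\m/x_k$, and $\m/x_k$ is a proper divisor of the minimal generator $\m$, hence $\m/x_k\notin I$. Thus it suffices to show $\m_{\<i\>}\mid\m/x_k$, which (because $\fb_i(\m)=\m_{\<i\>}w$ and $\fb_i(\m)/x_i=\m/x_k$) is equivalent to $x_i\mid w$. But if $x_i\nmid w$, then the full $x_i$-exponent $a_i+1\ge1$ of $\fb_i(\m)$ is carried by $\m_{\<i\>}$, so $\nu(\m_{\<i\>})\ge i$; together with $\nu(\m_{\<i\>})\le\mu(w)\le i$ this forces $\mu(w)=i$, hence $x_i\mid w$ after all, a contradiction. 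Therefore $x_i\mid w$, so $\m_{\<i\>}\mid\fb_i(\m)/x_i=\m/x_k\in I$, contradicting $\m\in G(I)$ and finishing the proof.

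I do not expect a genuine obstacle here; the only point needing a moment's care is that $w$ may a priori involve variables of index strictly below $i$, so one cannot simply ``cancel $x_i$'' before checking $x_i\mid w$ --- which is exactly what the exponent count above supplies. Alternatively, one can feed $\fb_i(\m)/x_{\mu(w)}\in I$ into a single Borel move sending $x_i$ to $x_{\mu(w)}$ (legitimate since the $x_i$-exponent of $\fb_i(\m)/x_{\mu(w)}$ is positive), which again returns $\fb_i(\m)/x_i=\m/x_k$ and yields the same contradiction. One could also deduce the lemma from the fact that, for a monomial $\m'\in I$, $g(\m')$ is the unique prefix of the sorted factorization of $\m'$ that belongs to $G(I)$, but the direct argument above is shorter.
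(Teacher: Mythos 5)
Your argument is correct and is essentially the paper's proof: assuming the conclusion fails, you show that $\m_{\<i\>}$ divides a proper divisor of $\m$ (namely $\m/x_k$, whereas the paper just notes $\m_{\<i\>}\mid\m$), contradicting the minimality of $\m\in G(I)$. The extra intermediate step about $x_i\mid w$ only fills in the divisibility check the paper leaves implicit, so no substantive difference.
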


\begin{proof}
If the assertion does not hold,  then $\m_{\<i\>}$ divides $\m$, 
and it contradicts the assumption that $\m \in G(I)$. 
\end{proof}

For $\wF=\{ \, (i_1,j_1), \ldots, (i_q, j_q) \, \}$ and $r$ with $1 \le r \le q$, 
set $$\wF_r := \wF \setminus \{ \, (i_r, j_r) \, \}.$$ 
If $(\wF, \m)$ is an admissible pair for $\BoX(I)$, we set 
$$B(\wF, \wm) := \{ \, r \mid \text{$(\wF_r, \wm_{\<i_r\>})$ is admissible}\, \}.$$

\begin{exmp}
Let $I \subset S=\kk[x_1,x_2,x_3,x_4]$ be the smallest Borel fixed ideal containing $\m=x_1^2x_3x_4$. 
In this case, $\m'_{\<i\>}=\fb_{i}(\m')$ for all $\m' \in G(I)$. 
Hence, we have  $\m_{\<1\>} =x_1^3x_4$, $\m_{\<2\>}=x_1^2x_2x_4$ and 
$\m_{\<3\>} =x_1^2x_3^2$. 
Clearly, 
$$(\wF, \wm) =(\{\,(1,3), (2,3), (3,4) \, \}, x_{1,1} \, x_{1,2} \, x_{3,3} \, x_{4,4})$$ is admissible. 
(For this $\wF$, $i_r=r$ holds and the reader should be careful.) Now  
\begin{align*}
& (\wF_1, \wm_{\<1\>})= (\{\,(2,3), (3,4) \, \}, x_{1,1} \, x_{1,2} \, x_{1,3} \, x_{4,4}) \\
& (\wF_2, \wm_{\<2\>})= (\{\,(1,3), (3,4) \, \}, x_{1,1} \, x_{1,2} \, x_{2,3} \, x_{4,4}) \\
& (\wF_3, \wm_{\<3\>})= (\{\,(1,3), (2,3) \, \}, x_{1,1} \, x_{1,2} \, x_{3,3} \, x_{3,4}).
\end{align*} 
The last two pairs are admissible. On the other hand, the first one does not satisfy the condition (b) of Definition~\ref{admissible2}.
Hence $B(\wF, \wm)=\{ 2,3 \}$. 

Next let $I'$ be the smallest Borel fixed ideal containing $\m=x_1^2x_3x_4$ and $x_1^2x_2$. 
For $\wF=\{\,(1,3), (2,3), (3,4) \, \}$, $(\wF, \wm)$ is admissible again. 
However $\wm_{\<2\>}=x_1^2x_2$ in this time, and $(\wF_2, \wm_{\<2\>}) =(\{\,(1,3), (3,4) \, \}, x_{1,1} \, x_{1,2} \, x_{2,3} )$ 
is no longer admissible. Hence $B(\wF, \wm)=\{ 3 \}$  for $\BoX(I')$. 
\end{exmp}

\begin{lem}\label{lem for adm}
Let $(\wF, \wm)$ be as in Lemma~\ref{rem for adm}.  
\begin{itemize}
\item[(i)] For all $r$ with $1 \le r \le q$, $(\wF_r, \wm)$ is admissible. 
\item[(ii)] We always have $q \in B(\wF, \wm)$. 
\item[(iii)] Assume that $(\wF_r, \wm_{\<i_r\>})$ satisfies the condition (a) of Definition~\ref{admissible2}.  
Then $r \in B(\wF, \wm)$  if and only if either $j_r < j_{r+1}$ or $r=q$. 
\item[(iv)] For $r,s$ with $1 \le r < s \le q$ and $j_r <j_s$, we have $\fb_{i_r}(\fb_{i_s}(\m))=\fb_{i_s}(\fb_{i_r}(\m))$ 
and hence $(\wm_{\<i_r\>})_{\<i_s\>}=(\wm_{\<i_s\>})_{\<i_r\>}$. 
\item[(v)] For $r,s$ with $1 \le r < s \le q$ and $j_r = j_s$, we have 
$\fb_{i_r}(\m)= \fb_{i_r} (\fb_{i_s}(\m))$ and hence $\wm_{\<i_r\>}= (\wm_{\<i_s\>})_{\<i_r\>}$. 
\end{itemize}
\end{lem}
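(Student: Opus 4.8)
The plan is to prove each of the five parts of Lemma~\ref{lem for adm} by unwinding the definitions, using only the combinatorics of the exponent sequences and the elementary operations $\fb_i$ and $g$. The statements are of two flavors: (i)--(iii) are purely about the ``admissibility'' bookkeeping (conditions (a) and (b) of Definition~\ref{admissible2} together with the definition of $B(\wF,\wm)$), while (iv)--(v) are about the commutation behavior of the Borel moves $\fb_{i_r}$ and the associated reduction $g$. I would treat them in the order (i), (iv), (v), (ii), (iii), since the later bookkeeping claims are cleanest once the commutation facts are in hand.

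For part (i): removing $(i_r,j_r)$ from $\wF$ cannot destroy condition (a), since $1\le i_1<\cdots<i_q<\nu(\m)$ stays strictly increasing after deleting one term; and condition (b) is a condition imposed separately on each pair $(i_s,j_s)$ relative to the \emph{same} monomial $\m$, so it survives verbatim for the remaining indices. Hence $(\wF_r,\wm)$ is admissible. For part (iv): when $j_r<j_s$ with $r<s$, the ``pivot'' variables used in the two moves are disjoint in a strong sense --- $\fb_{i_s}$ replaces some $x_k$ with $k>i_s\ge i_r$ by $x_{i_s}$, and by Lemma~\ref{m_i_r} the move $\fb_{i_r}$ only touches exponents in variables $x_l$ with $l\le i_r<i_s$; since $j_r<j_s$ forces $\sum_{l=1}^{i_r}a_l<\sum_{l=1}^{i_s}a_l$, i.e.\ there is at least one variable strictly between $i_r$ and $i_s$ dividing $\m$, the index $k$ produced by $\fb_{i_r}$ lies strictly below $i_s$. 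Thus the two operations act on disjoint coordinate windows and commute as monomial operations; applying $g$ (which is determined functorially by the monomial) then gives $(\wm_{\<i_r\>})_{\<i_s\>}=(\wm_{\<i_s\>})_{\<i_r\>}$. For part (v): if $j_r=j_s$, then $\sum_{l=1}^{i_r}a_l=\sum_{l=1}^{i_s}a_l$, so no variable $x_l$ with $i_r<l\le i_s$ divides $\m$; consequently $\nu$-data below $i_s$ is unchanged by $\fb_{i_s}$ in the relevant range, the ``next variable'' index $k$ for the $\fb_{i_r}$-move is the same whether computed on $\m$ or on $\fb_{i_s}(\m)$, and one checks directly that $\fb_{i_r}(\fb_{i_s}(\m))=\fb_{i_r}(\m)$; again pushing through $g$ yields the claimed equality of the $\BoX$'s.

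For part (ii), I would show that $(\wF_q,\wm_{\<i_q\>})$ is always admissible. Condition (a) for $\wF_q=\{(i_1,j_1),\dots,(i_{q-1},j_{q-1})\}$ requires $i_{q-1}<\nu(\wm_{\<i_q\>})$; by Lemma~\ref{m_i_r} we have $\nu(\m_{\<i_q\>})\ge i_q>i_{q-1}$, so (a) holds. For condition (b) relative to $\m_{\<i_q\>}$: by Lemma~\ref{m_i_r}, $\m_{\<i_q\>}$ and $\fb_{i_q}(\m)$ agree in all variables $x_l$ with $l\le i_q$, hence a fortiori with $\m$ in all variables $x_l$ with $l\le i_{q-1}<i_q$ --- so the partial sums $\sum_{l=1}^{i_s}a_l$ are unchanged for $s\le q-1$, and (b) is inherited. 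Thus $q\in B(\wF,\wm)$. For part (iii): condition (a) for $(\wF_r,\wm_{\<i_r\>})$ is assumed, so membership in $B(\wF,\wm)$ is exactly the question of whether (b) holds. Arguing as in (ii) via Lemma~\ref{m_i_r}, the partial sums for the surviving indices $s\ne r$ with $i_s<i_r$ are unchanged, so (b) is automatic for those; the only index that can fail (b) is $i_{r+1}$ (the smallest surviving index exceeding $i_r$), and for it (b) now reads $j_{r+1}=1+\sum_{l=1}^{i_{r+1}}a'_l$ where $a'$ are the exponents of $\m_{\<i_r\>}$. Passing from $\m$ to $\fb_{i_r}(\m)$ and then $g$ shifts one unit of exponent from position $k=\min\{l>i_r:a_l>0\}$ down to position $i_r$; this changes the partial sum $\sum_{l=1}^{i_{r+1}}$ precisely when $i_r<i_{r+1}<k$, i.e.\ precisely when $\sum_{l=1}^{i_r}a_l=\sum_{l=1}^{i_{r+1}}a_l$, i.e.\ when $j_r=j_{r+1}$. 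So (b) holds for $\wF_r$ if and only if $j_r<j_{r+1}$ (or $r=q$, handled by (ii)). The main obstacle is bookkeeping care in (iii)--(v): keeping straight the difference between the move $\fb_{i_r}$ (an elementary Borel move) and the reduced generator $\m_{\<i_r\>}=g(\fb_{i_r}(\m))$, and verifying that the partial-sum data governing condition (b) is genuinely preserved under $g$ --- Lemma~\ref{m_i_r} is exactly the lever for this, since it pins down $\m_{\<i\>}$ below degree $i$, but one must apply it at the right index ($i_q$ or $i_r$) and then restrict to the smaller surviving indices.
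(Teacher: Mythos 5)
Your overall route is the same as the paper's: (i) is immediate, (ii) goes through Lemma~\ref{m_i_r}, (iii) reduces to condition (b) and tracks the partial sums $\sum_{l\le i_s}a_l$ under the move $\fb_{i_r}$, and (iv)--(v) are proved by identifying the pivots of the two Borel moves explicitly; even your brisk passage through $g$ at the end of (iv)--(v) matches the paper's own level of detail there. Two steps, however, are not justified as written.

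First, in (iii) you control the exponents of $\m_{\<i_r\>}$ at the position $i_{r+1}>i_r$ by ``arguing as in (ii) via Lemma~\ref{m_i_r}'', but that lemma only pins down $\m_{\<i_r\>}$ in the variables $x_l$ with $l\le i_r$; it says nothing about the range $(i_r,i_q]$, which is exactly where the partial sum attached to $(i_{r+1},j_{r+1})$ lives. The missing ingredient is the assumed condition (a): it gives $\nu(\m_{\<i_r\>})>i_q$, and since $g(\n)$ agrees with $\n$ in every variable strictly below $\nu(g(\n))$, the exponents of $\m_{\<i_r\>}$ coincide with those of $\fb_{i_r}(\m)$ for all $l\le i_q$ --- this is precisely how the paper argues, and without it your assertion that passing to $\m_{\<i_r\>}$ merely ``shifts one unit of exponent from $k$ down to $i_r$'' (i.e.\ that $g$ does nothing in the relevant range) is unsupported. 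Relatedly, ``the only index that can fail (b) is $i_{r+1}$'' is not literally true: when $j_r=j_{r+1}=\cdots=j_s$ the condition fails at every such $i_s$. What is true, and what you need, is that failure at any $i_s$ with $s>r$ means $k>i_s\ge i_{r+1}$ and hence failure already at $i_{r+1}$ --- or simply check all $s>r$ at once, as the paper does. Second, in (iv) your disjointness argument has a boundary case: $j_r<j_s$ only yields some $l$ with $i_r<l\le i_s$ and $a_l>0$, and the minimal such $l$ may equal $i_s$, in which case the pivot of $\fb_{i_r}$ is $i_s$ itself and the two moves are not supported on disjoint windows. The commutation still holds --- both composites equal $(x_{i_r}x_{i_s}/x_lx_k)\cdot\m$ with $k=\min\{\, m>i_s \mid a_m>0 \,\}$, which is exactly the paper's computation --- but your stated reason fails there, so replace ``disjoint windows'' by the direct verification that each move leaves the other's pivot unchanged.
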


\begin{proof} (i) Clear.  

(ii) It suffices to show that $(\wF_q, \wm_{\<i_q\>})$ satisfies the condition (a) of Definition~\ref{admissible2}.  
However, this is clear since  $\nu(\m_{\<i_q\>})  \ge i_q > i_{q-1}$ by  Lemma~\ref{m_i_r}.

(iii) By (ii), we may assume that $r < q$. 
It suffices to consider the condition (b) of Definition~\ref{admissible2}. 
Set $k:=\min\{ \, l \mid  l > i_r, a_l > 0 \, \}$ and $\prod x_i^{b_i} := \fb_{i_r}(\m) = (x_{i_r}/x_k) \cdot \m$. 
Since $(\wF_r, \wm_{\<i_r\>})$ satisfies the condition (a), we have $i_q < \nu(\m_{\<i_r\>})$. 
Hence the exponents of the variables $x_l$ in $\m_{\<i_r\>}$ equals $b_l$ for all $l \le i_q$. 
Clearly, $j_r = j_{r+1}$, if and only if  $a_l=0$ for all $l$ with $i_r< l \le i_{r+1}$, if and only if $k > i_{r+1}$. 
Hence $j_r=j_{r+1}$ implies $\sum_{l \le i_{r+1}}b_l= 1+\sum_{l\le i_r}a_l=j_r=j_{r+1}$, 
and $(\wF_r, \wm_{\<i_r\>})$ does not satisfy the condition (b). 
Next we assume that  $j_r <j_{r+1}$. Then we have $\sum_{l \le i_s}b_l= \sum_{l \le i_s} a_l =j_s-1$ for all $s \le q$ with $s \ne r$,  
and $(\wF_r, \wm_{\<i_r\>})$ satisfies (b). 

(iv) Note that if  $j_r < j_s$ then  $a_l>0$ for some $l$ with $i_r< l \le i_s$. 
Assume that $l$ is the smallest among these integers, and set $k:= \min \{ \, m \mid  m > i_s, a_m >0 \, \}$. 
Then  both $\fb_{i_r}(\fb_{i_s}(\m))$ and $\fb_{i_s}(\fb_{i_r}(\m))$ are equal to $(x_{i_r}x_{i_s}/x_lx_k) \cdot \m$. 

(v) Since $a_l=0$ for all $l$ with $i_r< l \le i_s$ by the assumption,  we have  
$$\fb_{i_r} (\fb_{i_s}(\m))= \fb_{i_r}((x_{i_s}/x_k)\cdot \m)= (x_{i_r}/x_{i_s})\cdot 
(x_{i_s}/x_k)\cdot \m=(x_{i_r}/x_k)\cdot \m= \fb_{i_r}(\m),$$ 
where $k=\min\{ \, l \mid  l > i_r, a_l > 0 \, \}$ (note that $i_s < k$ now). 
\end{proof}

For $F=\{  i_1, \ldots, i_q \} \subset \NN$ with $i_1 < i_2 < \cdots < i_q$ and $\m \in I$, 
Eliahou-Kervaire \cite{EK} call the pair $(F,\m)$ admissible for $I$, if $i_q < \nu(\m)$.   
Clearly, there is a unique sequence $j_1, \ldots, j_q$ such that 
$(\wF, \wm)$ is admissible for $\wI$, where $\wF=\{ \, (i_1, j_q), \ldots, (i_q, j_q) \, \}$.  
In this way, there is a one-to-one correspondence between the admissible pairs for $I$ 
and those for $\wI$. 
As the  free summands of the Eliahou-Kervaire resolution of $I$ are indexed by the admissible pairs for $I$, 
the free summands of our resolution of $\wI$ are indexed by the admissible pairs for $\wI$.

We will define a $\ZZ^{n \times d}$-graded chain complex $\wP_\bullet$ of free $\wS$-modules as follows. 
First, set $\wP_0:=\wS$.  
For each $q \ge 1$, we set 
$$A_q := \text{the set of  admissible pairs $(\wF, \wm)$ for $\BoX(I)$ with $\# \wF=q$},$$
and 
$$\wP_q := \bigoplus_{(\wF,\wm) \in A_{q-1}} \wS \, e(\wF, \wm),$$
where $e(\wF,\wm)$ is a basis element with 
$$\deg \left(e(\wF, \wm)\right)=\deg \left(\wm \times \prod_{(i_r, j_r) \in \wF}x_{i_r, j_r} \right) \in \ZZ^{n \times d}.$$ 
We define the $\wS$-homomorphism $\partial : \wP_{q+1} \to \wP_q$ for $q \ge 1$ so that $e(\wF, \wm)$ 
with $\wF=\{ (i_1, j_1), \ldots, (i_q, j_q)\}$ is sent to 
$$\sum_{ 1 \le  r \le q } (-1)^r \cdot  x_{i_r, j_r} \cdot e(\wF_r, \wm) - \sum_{r \in B(\wF, \wm)} 
(-1)^r \cdot \frac{ x_{i_r, j_r} \cdot \wm}{\wm_{\<i_r\>}} \cdot e(\wF_r, \wm_{\<i_r\>}),$$
and $\partial: \wP_1 \to \wP_0$ by $e(\emptyset, \wm) \longmapsto \wm \in \wS=\wP_0$.  
Clearly, $\partial$ is a $\ZZ^{n\times d}$-graded homomorphism.

Set 
$$\wP_\bullet: \cdots \stackrel{\partial}{\too} \wP_i \stackrel{\partial}{\too}\cdots 
\stackrel{\partial}{\too} \wP_1 \stackrel{\partial}{\too} \wP_0 \too 0.$$
Then we have the following.

\begin{thm}\label{main}
The complex $\wt P_\bullet$ is a $\ZZ^{n\times d}$-graded minimal $\wt S$-free resolution
for $\wt S/\bpol(I)$.
\end{thm}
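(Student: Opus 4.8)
The plan is to verify the three assertions in turn: that $\wP_\bullet$ is a complex, that it is acyclic (i.e. a resolution of $\wS/\wI$), and that it is minimal. Minimality is immediate from the construction: every entry of every matrix $\partial$ is either $\pm x_{i_r,j_r}$ or $\pm (x_{i_r,j_r}\cdot\wm)/\wm_{\<i_r\>}$, and in each case this is a monomial of positive degree --- for the second type, $\wm_{\<i_r\>}$ strictly divides $x_{i_r,j_r}\cdot\wm$ because $\m_{\<i_r\>}$ does not divide $\m$ (Lemma~\ref{m_i_r}, as in the remark following it). So $\partial\otimes_{\wS}\wS/\wm$ is zero, and it suffices to prove $\wP_\bullet$ is a resolution. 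Then Theorem~\ref{main} follows.

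For $\partial^2=0$ I would argue combinatorially, expanding $\partial^2 e(\wF,\wm)$ and collecting the coefficient of each basis element $e(\wG,\wg)$ with $\#\wG=q-1$. There are three kinds of cancellation to track: (1) the ``pure Koszul'' terms $x_{i_r,j_r}x_{i_s,j_s}e(\wF_{rs},\wm)$, which cancel in pairs exactly as in the Koszul/Taylor complex by the sign $(-1)^r(-1)^s$ vs.\ $(-1)^s(-1)^{r}$ after reindexing; (2) mixed terms, where one step removes $(i_r,j_r)$ going to $\wm_{\<i_r\>}$ and the other removes $(i_s,j_s)$ staying at the generator --- here Lemma~\ref{lem for adm}(iv),(v) is the crucial input, since it guarantees $(\wm_{\<i_r\>})_{\<i_s\>}=(\wm_{\<i_s\>})_{\<i_r\>}$ (when $j_r<j_s$) or a degeneration (when $j_r=j_s$) that makes the two routes land on the same basis element with the same coefficient and opposite sign; and (3) the interaction between $r\in B(\wF,\wm)$ and membership of the smaller pair in its $B$-set, which is controlled by Lemma~\ref{lem for adm}(iii) (the ``$j_r<j_{r+1}$ or $r=q$'' criterion) together with (ii). The bookkeeping here is the routine-but-delicate part; the conceptual content is entirely in Lemma~\ref{lem for adm}.

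For acyclicity I would use the standard device of a $\ZZ^{n\times d}$-graded filtration argument combined with an explicit contracting homotopy, mimicking the Eliahou--Kervaire proof. Concretely, order $G(I)=\{\m_1,\ldots,\m_t\}$ compatibly with the total degree and (within a degree) with some term order, let $I_{\le s}=(\m_1,\ldots,\m_s)$, and let $\wP_\bullet^{(s)}$ be the subcomplex of $\wP_\bullet$ spanned by the $e(\wF,\wm)$ with $\wm=\BoX(\m_{s'})$ for some $s'\le s$. One shows each $\wP_\bullet^{(s)}/\wP_\bullet^{(s-1)}$ is a twist of the Koszul-type complex on the admissible pairs built over the single generator $\m_s$, hence acyclic with $H_0$ equal to the appropriate colon-ideal quotient; the key point is that the ``$B(\wF,\wm)$ part'' of $\partial$ maps $\wP_\bullet^{(s)}$ into $\wP_\bullet^{(s-1)}$, because $\wm_{\<i_r\>}=\BoX(g(\fb_{i_r}(\m_s)))$ and $g(\fb_{i_r}(\m_s))$ is a generator of lower index in the order (as $\fb_{i_r}$ strictly decreases the relevant invariant). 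Then the long exact sequences in homology, together with the fact that the Eliahou--Kervaire complex of $S/I$ is obtained as $\wP_\bullet\otimes_{\wS}\wS/(\Theta)$ and is known to be exact, let me induct on $s$.

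The main obstacle I anticipate is the base-case/quotient-complex analysis: one must check that for a fixed generator $\m$ the subquotient complex built from $\{e(\wF,\wm):\#\wF=q,\ (\wF,\wm)\text{ admissible}\}$ with the ``pure'' part of $\partial$ is exact with the right zeroth homology --- i.e.\ that the admissible $\wF$'s for a fixed $\wm$ form (after the monomial twist) an acyclic complex resolving $\wS/(x_{i,j_i}: 1\le i<\nu(\m))$ or the correct analogue. This is where the passage from the single-index Eliahou--Kervaire combinatorics to the double-index $(i,j)$ combinatorics of $\BoX$ has to be done carefully, and where condition (b) of Definition~\ref{admissible2} (which rigidly couples $j_r$ to $i_r$ via the exponent sequence of $\m$) does the real work. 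An alternative, possibly cleaner, route --- which the paper flags it will take in \S5 --- is to exhibit an acyclic Morse matching on the Taylor complex of $\wI$ whose critical cells are exactly the admissible pairs and whose induced differential is the $\partial$ above; I would fall back on that if the direct filtration argument becomes unwieldy.
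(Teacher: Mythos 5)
Your overall architecture --- minimality from the positive-degree monomial entries of $\partial$, then $\partial\circ\partial=0$ by case analysis driven by Lemma~\ref{lem for adm}, then acyclicity by filtering $\wP_\bullet$ according to the generators and recognizing the subquotients as (twisted) Koszul complexes resolving colon-ideal quotients --- is exactly the strategy of the paper, which organizes the filtration as a Peeva--Stillman style mapping-cone induction on $\#G(I)$ with the generators ordered by the pure lexicographic order. One caveat on the $\partial^2=0$ part: Lemma~\ref{lem for adm} alone does not settle the mixed case $r\notin B(\wF,\wm)$, $s\in B(\wF,\wm)$; there one needs the additional statement of Lemma~\ref{case 3} (that $r\in B(\wF_s,\wm_{\<i_s\>})$ iff $r\in B(\wF_s,\wm)$, and that then $\m_{\<i_r\>}=(\m_{\<i_s\>})_{\<i_r\>}$), which is not a formal consequence of parts (iii)--(v) and is precisely the ``delicate bookkeeping'' you defer.

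Two points in your acyclicity argument are genuine problems. First, the ingredient ``the Eliahou--Kervaire complex of $S/I$ is obtained as $\wP_\bullet\otimes_{\wS}\wS/(\Theta)$ and is known to be exact'' is both false and circular: Remark~\ref{rem:diff res}\,(1) shows that $\wP_\bullet\otimes_{\wS}\wS/(\Theta)$ is \emph{not} the Eliahou--Kervaire resolution (the correspondence of admissible pairs is not a chain map, and the differentials do not even have the same number of terms), and in the paper's logic the exactness of $\wP_\bullet\otimes_{\wS}\wS/(\Theta)$ is a \emph{consequence} of Theorem~\ref{main} (via Corollary~\ref{new proof}), not an available input. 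Fortunately this ingredient is unnecessary: the long exact sequence induction closes on its own once the connecting map is identified with multiplication by $\wm_s$ on $\wS/(\wI_{s-1}:\wm_s)$, which is injective by the very definition of the colon ideal --- this injectivity is the point the paper highlights, and you do not mention it. Second, the step you yourself flag as ``the main obstacle'' is exactly the real content of the paper's proof, and you do not supply it: one must prove that $(\wI_{s-1}:\wm_s)=(x_{1,b_1},\dots,x_{\lambda,b_\lambda})$ with $\lambda=\nu(\m_s)-1$ and $b_i=1+\sum_{l\le i}a_l$ (Lemma~\ref{sec:bpolfilt}), and that each truncated ideal is again Borel fixed with unchanged admissible pairs and $B$-sets (Lemma~\ref{sec:bfilt}), so that the filtration pieces really are the complexes attached to the truncations and the quotient complex really is the Koszul complex on those variables. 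Both arguments use the pure lex order in an essential way (e.g.\ $(\m_s)_{\<i\>}\succ\m_s$ and the ``first differing exponent'' comparison); your vaguer ``total degree, then some term order'' would have to be replaced by, or checked against, this choice. As it stands, the acyclicity half of your proposal is a correct plan with its decisive lemmas missing and one incorrect shortcut.
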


\section{The proof of Theorem~\ref{main}}
This section is devoted to the proof of Theorem~\ref{main}, but it is not easy to see that $\wP_\bullet$ is even a chain complex. 

\begin{prop}\label{diff}
With the above notation, we have $\partial \circ \partial =0$, that is, $\wP_\bullet$ is a chain complex. 
\end{prop}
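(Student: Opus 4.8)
The plan is to verify $\partial\circ\partial=0$ by applying $\partial$ to $\partial(e(\wF,\wm))$ and collecting, for each pair $(\wG,\wg)$ with $\#\wG=q-1$, the total coefficient of $e(\wG,\wg)$; the goal is to show that these cancel. Since $\partial(e(\wF,\wm))$ has two kinds of terms — the ``combinatorial'' terms $(-1)^r x_{i_r,j_r}e(\wF_r,\wm)$ and the ``Borel'' terms $-(-1)^r\frac{x_{i_r,j_r}\wm}{\wm_{\<i_r\>}}e(\wF_r,\wm_{\<i_r\>})$ for $r\in B(\wF,\wm)$ — the second application of $\partial$ produces four families of contributions to $\partial^2$: (CC) combinatorial-then-combinatorial, (CB) combinatorial-then-Borel, (BC) Borel-then-combinatorial, and (BB) Borel-then-Borel. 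I would organize the proof around showing that (CC) cancels within itself, (BB) cancels within itself, and (CB) cancels against (BC).

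First I would dispatch (CC): this is the classical Koszul-type cancellation. Removing $i_r$ then $i_s$ (with $r<s$) gives $\pm x_{i_r,j_r}x_{i_s,j_s}e(\wF\setminus\{(i_r,j_r),(i_s,j_s)\},\wm)$, and removing them in the other order gives the same monomial coefficient with opposite sign because of the $(-1)^r(-1)^{s}$ versus $(-1)^s(-1)^{r-1}$ bookkeeping (the index of $i_r$ drops by one after $i_s$ is deleted only when $s<r$, so one must track this carefully). This is routine once the sign convention is pinned down. The (BB) family requires Lemma~\ref{lem for adm}(iv) and (v): when we apply $\fb_{i_s}$ then $\fb_{i_r}$ versus $\fb_{i_r}$ then $\fb_{i_s}$ to $\m$, parts (iv) and (v) say the two composites agree (either they literally commute, or when $j_r=j_s$ one of them is redundant). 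In the $j_r<j_s$ case both orders survive in $B$ and give equal monomials $\frac{x_{i_r,j_r}x_{i_s,j_s}\wm}{(\wm_{\<i_r\>})_{\<i_s\>}}$ with opposite signs, so they cancel; in the $j_r=j_s$ case one checks via Lemma~\ref{lem for adm}(iii) (the criterion $j_r<j_{r+1}$ or $r=q$) and (v) that the would-be terms either do not both appear in $B$ or collapse to the same summand and kill each other. Here one must be attentive to which of $(\wF_r,\wm_{\<i_r\>})$, $(\wF_s,\wm_{\<i_s\>})$ actually lie in the respective $B$-sets — this is exactly what Lemma~\ref{lem for adm}(ii),(iii) control.

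The heart of the argument, and the step I expect to be the main obstacle, is the cross cancellation (CB) against (BC). A combinatorial term first removes $(i_r,j_r)$ keeping $\wm$, then a Borel step removes some $(i_s,j_s)$ with $s\ne r$ and replaces $\wm$ by $\wm_{\<i_s\>}$; a Borel term first removes $(i_s,j_s)$ replacing $\wm$ by $\wm_{\<i_s\>}$, then a combinatorial step removes $(i_r,j_r)$ keeping $\wm_{\<i_s\>}$. Both land at the pair $(\wF\setminus\{(i_r,j_r),(i_s,j_s)\},\wm_{\<i_s\>})$, and I must check the monomial coefficients match: on the (CB) side it is $x_{i_r,j_r}\cdot\frac{x_{i_s,j_s}\wm}{\wm_{\<i_s\>}}$, on the (BC) side it is $\frac{x_{i_r,j_r}x_{i_s,j_s}\wm}{\wm_{\<i_s\>}}$ essentially verbatim — but one has to confirm $(\wF\setminus\{(i_r,j_r)\})_s$-type index relabelling and that $(i_r,j_r)$ is genuinely a valid entry of the admissible pair $(\wF_s,\wm_{\<i_s\>})$, which uses Lemma~\ref{rem for adm} together with the fact (noted just before Lemma~\ref{m_i_r}, via Lemma~\ref{m_i_r} itself) that $\wm_{\<i_s\>}$ divides $x_{i_s,j_s}\wm$ and agrees with $\fb_{i_s}(\m)$ in low variables. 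The signs: (CB) contributes $(-1)^r\cdot(-(-1)^{s'})$ and (BC) contributes $(-(-1)^s)\cdot(-1)^{r'}$ where $s',r'$ are the shifted indices after one deletion, and the shift parity difference produces exactly the extra minus sign needed for cancellation. I would also have to handle the boundary interaction with $\wP_1\to\wP_0$ separately, checking $\partial(e(\wF,\wm))$ maps to $0$ in $\wS$ when $q=1$, which is immediate since $x_{i_1,j_1}\wm - \frac{x_{i_1,j_1}\wm}{\wm_{\<i_1\>}}\wm_{\<i_1\>}=0$. The genuine difficulty throughout is not any single identity but the case analysis by whether $j_r<j_s$ or $j_r=j_s$ and whether various sub-pairs remain admissible — keeping that bookkeeping honest, guided entirely by Lemma~\ref{lem for adm}, is where the proof lives.
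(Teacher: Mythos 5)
Your overall strategy (expand $\partial^2 e(\wF,\wm)$, sort the result by the target pair $\wF_{r,s}$, cancel in pairs, treat $q=1$ separately) is the same as the paper's, and your (CC) analysis, the sign-shift bookkeeping, and the $q=1$ computation are fine. The gap is in your global pairing ``(CC) with (CC), (BB) with (BB), (CB) with (BC)'': it tacitly assumes that Borel-eligibility of an index $r$ is unchanged when another entry $(i_s,j_s)$ is deleted from $\wF$ or when $\wm$ is replaced by $\wm_{\<i_s\>}$, and this is false. Concretely, suppose $r\notin B(\wF,\wm)$, $s\in B(\wF,\wm)$, but $r\in B(\wF_s,\wm)$ (this happens, e.g., when $s=r+1$, $j_r=j_{r+1}$, so that condition (b) of Definition~\ref{admissible2} fails for $(\wF_r,\wm_{\<i_r\>})$ but no longer obstructs once $(i_s,j_s)$ is removed). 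Then $\partial^2e(\wF,\wm)$ contains the CB term $\pm x\cdot e(\wF_{r,s},\wm_{\<i_r\>})$ coming from $e(\wF_s,\wm)$, but the BC partner your scheme assigns to it would have to come from $e(\wF_r,\wm_{\<i_r\>})$, which does not occur in $\partial e(\wF,\wm)$ at all because $r\notin B(\wF,\wm)$. What actually cancels it is a BB term, $\mp x\cdot e(\wF_{r,s},(\wm_{\<i_s\>})_{\<i_r\>})$, and for that you must prove $\m_{\<i_r\>}=(\m_{\<i_s\>})_{\<i_r\>}$ under exactly these hypotheses. This is the content of the paper's Lemma~\ref{case 3}, whose proof splits according to whether $(\wF_r,\wm_{\<i_r\>})$ fails condition (a) (forcing $s=q$ and using $\nu(\m_{\<i_r\>})\le i_q$ together with parts (iv),(v) of Lemma~\ref{lem for adm}) or condition (b) (forcing $s=r+1$, $j_r=j_s$, then (v)). Because $\m_{\<i_r\>}=g(\fb_{i_r}(\m))$ involves the operator $g(-)$, this identity is genuinely non-trivial, and your proposal never identifies either the mixed CB-versus-BB cancellation or the identity it rests on.

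A second gap of the same kind: when $r,s\notin B(\wF,\wm)$ you must also show that no Borel term appears at the second step, i.e.\ $r\notin B(\wF_s,\wm)$ and $s\notin B(\wF_r,\wm)$ (an unlabelled lemma inside the paper's proof); otherwise a stray CB term would again have no partner under your pairing. Once these two statements are isolated and proved, the remaining cancellations you describe — CC always, and BB-versus-BB and CB-versus-BC when both $r,s\in B(\wF,\wm)$, controlled by parts (iii),(iv) of Lemma~\ref{lem for adm} (including the observation that $r,s\in B(\wF,\wm)$ forces $j_r\neq j_s$, and the subcase where neither composite Borel move is admissible, so both fourth terms are simply absent) — do go through essentially as in the paper.
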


\noindent{\it Proof.} 
It suffices to prove $\partial \circ \partial(e(\wF,\wm))=0$ for each admissible pair $(\wF,\wm)$. 
If $\# \wF=1$ (i.e, $\wF=\{(i,j)\}$), then the assertion is easy. In fact, 
\begin{eqnarray*}\partial \circ \partial( \, e(\{ (i,j) \}, \wm) \, ) &=& 
\partial \left( -x_{i,j} \cdot e(\emptyset, \wm) + \frac{x_{i,j} \cdot \wm}{\wm_{\<i\>}} \cdot 
e(\emptyset, \wm_{\<i\>}) \right)\\
&=&-x_{i,j} \cdot \wm + \frac{x_{i,j} \cdot \wm}{\wm_{\<i\>}}\cdot \wm_{\<i\>}\\
&=&0.
\end{eqnarray*}

So we may assume that $q:= \# \wF  >1$.    
For $r,s$ with $0 \le r, s \le q$ and $r \ne s$, set $\wF_{r,s} := \wF \setminus \{\, \{ i_r, j_r\}, \, \{ i_s, j_s\} \, \}$.  
We have a unique expression 
$$\partial \circ \partial(e(\wF,\wm))=C_{r,s}+C,$$
where $C_{r,s}$ (resp. $C$) is an $\wS$-linear combination of the basis elements of the form $e(\wF_{r,s}, -)$ 
(resp. $e(\wF_{t,u}, -)$ with $\{t,u\} \ne \{r,s\}$). 
To show $\partial \circ \partial(e(\wF,\wm))=0$, it suffices to check that $C_{r,s}=0$ for each $r,s$. 
Set
$$\partial(e(\wF,\wm))=C_r+C_s+C',$$
where $C_r$ (resp. $C_s$) is an $\wS$-linear combination of the basis elements of the form $e(\wF_r, -)$ 
(resp. $e(\wF_s,-)$), and $C'$ is an $\wS$-linear combination of $e(\wF_t, -)$ with $t \ne r,s$. 
Then it is easy to see that $C_{r,s}$ is the ``$\wF_{r,s}$-part" of 
$\partial(C_r+C_s)$.  

We will show $C_{r,s}=0$ dividing the arguments into several cases.

\medskip

First, consider the case $r,s \in B(\wF, \wm)$. Then it is clear that $r \in B(\wF_s, \wm)$ and 
$s \in B(\wF_r, \wm)$.  
Since $r, s \in B(\wF, \wm)$, we have $j_r \ne j_s$ by Lemma~\ref{lem for adm} (iii), and hence 
$(\wm_{\<i_s\>})_{\<i_r\>}=(\wm_{\<i_r\>})_{\<i_s\>}$ by Lemma~\ref{lem for adm} (iv). 
Therefore $r \in B(\wF_s, \wm_{\<i_s\>})$ if and only if  $s \in B(\wF_r, \wm_{\<i_r\>})$. 
We have  
\begin{eqnarray*}
\partial(e(\wF,\wm))&=&(-1)^r  \z  \cdot e(\wF_r, \wm) - (-1)^r  \z  \cdot e(\wF_r, \wm_{\<i_r\>})\\
&& +(-1)^s \z  \cdot e(\wF_s, \wm)-(-1)^s \z  \cdot e(\wF_s, \wm_{\<i_s\>})+C',
\end{eqnarray*}
where $C'$ is an $\wS$-linear combination of the basis elements $e(\wF_t, -)$ with $t \ne r,s$, 
and $\z$'s are certain monomials in $\wS$. Of course, each $\z$ are not the same.  
Since $\partial$ is $\ZZ^{n \times d}$-graded, the explicit form of $\z$ is not important. 
Anyway, we will use $\z$ in this meaning in the rest of the proof.  
Without loss of generality, we may assume that $r <s$ in the following computation. 

Assume that $r \in B(\wF_s, \wm_{\<i_s\>})$ (equivalently, $s \in B(\wF_r, \wm_{\<i_r\>})$).  Then 
\begin{eqnarray*}
\partial \circ \partial(e(\wF,\wm))&=&
-(-1)^{r+s}  \z  \cdot e(\wF_{r,s}, \wm) + (-1)^{r+s}  \z  \cdot e(\wF_{r,s}, \wm_{\<i_s\>})\\
& & +(-1)^{r+s}  \z  \cdot e(\wF_{r,s}, \wm_{\<i_r\>})
- (-1)^{r+s}  \z  \cdot e(\wF_{r,s}, (\wm_{\<i_r\>})_{\<i_s\>})\\
& & +(-1)^{r+s} \z  \cdot e(\wF_{r,s}, \wm)-(-1)^{r+s} \z  \cdot e(\wF_{r,s}, \wm_{\<i_r\>})\\
& &-(-1)^{r+s} \z  \cdot e(\wF_{r,s}, \wm_{\<i_s\>})+
(-1)^{r+s} \z  \cdot e(\wF_{r,s}, (\wm_{\<i_s\>})_{\<i_r\>})+C
\end{eqnarray*}
where $C$ is an $\wS$-linear combination of the basis elements $e(\wF_{t,u}, -)$ with $\{t,u\} \ne \{r,s\}$.  
Since $(\wm_{\<i_s\>})_{\<i_r\>}=(\wm_{\<i_r\>})_{\<i_s\>}$ now, $\partial \circ \partial(e(\wF,\m))-C=0$. 

If  $r \not \in B(\wF_s, \wm_{\<i_s\>})$ (equivalently, $s \not \in B(\wF_r, \wm_{\<i_r\>})$), then both  
$$- (-1)^{r+s}  \z  \cdot e(\wF_{r,s}, (\wm_{\<i_r\>})_{\<i_s\>}) \quad  \text{and} 
\quad (-1)^{r+s} \z  \cdot e(\wF_{r,s}, (\wm_{\<i_s\>})_{\<i_r\>})$$ 
do not appear in the above expansion of $\partial \circ \partial(e(\wF,\wm))$.   
Hence $\partial \circ \partial(e(\wF,\wm))-C=0$ remains true. 

\begin{lem}
If $r, s \not \in B(\wF,\wm)$, then we have $r \not \in B(\wF_s, \wm)$ and $s \not \in B(\wF_r, \wm)$. 
\end{lem}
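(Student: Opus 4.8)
The plan is to unwind the definition of $B(\wF,\wm)$ directly. Recall that $r\in B(\wF,\wm)$ means that $(\wF_r,\wm_{\<i_r\>})$ is admissible, i.e.\ satisfies both (a) and (b) of Definition~\ref{admissible2}. Since $\nu(\wm_{\<i_r\>})\ge i_r$ by Lemma~\ref{m_i_r} and $\nu(\wm_{\<i_s\>})\ge i_s$, condition (a) for $(\wF_r,\wm_{\<i_r\>})$ (resp.\ $(\wF_s,\wm_{\<i_s\>})$) always holds; so the failure $r\notin B(\wF,\wm)$ must come from condition (b). By Lemma~\ref{lem for adm}(iii) (using that (a) holds), $r\notin B(\wF,\wm)$ forces $r<q$ and $j_r=j_{r+1}$; likewise $s\notin B(\wF,\wm)$ forces $s<q$ and $j_s=j_{s+1}$.

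Next I would treat the two claims $r\notin B(\wF_s,\wm)$ and $s\notin B(\wF_r,\wm)$. Consider first $s\notin B(\wF_r,\wm)$: here we want to show $(\,(\wF_r)_s,\wm_{\<i_s\>}\,)$ fails to be admissible. Since $j_s=j_{s+1}$ and, in $\wF_r$, the index $i_{s+1}$ is still the successor of $i_s$ (because $r\ne s,s+1$; note $r<q$ and if $r=s+1$ we'd need a separate remark, but $r<s$ can be assumed by symmetry so $r<s<s+1$), the same computation as in the proof of Lemma~\ref{lem for adm}(iii) applies: writing $k:=\min\{l\mid l>i_s,\ a_l>0\}$ where $\prod x_l^{a_l}=\m$, the equality $j_s=j_{s+1}$ is equivalent to $k>i_{s+1}$, hence $\sum_{l\le i_{s+1}}b_l = 1+\sum_{l\le i_s}a_l = j_s = j_{s+1}$ where $\prod x_l^{b_l}=\fb_{i_s}(\m)$, and by Lemma~\ref{m_i_r} the exponents of $\m_{\<i_s\>}$ agree with those of $\fb_{i_s}(\m)$ up to index $i_{s+1}$; this violates (b) for $(\,(\wF_r)_s,\wm_{\<i_s\>}\,)$ at the slot $(i_{s+1},j_{s+1})$. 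So $s\notin B(\wF_r,\wm)$.

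The claim $r\notin B(\wF_s,\wm)$ is the delicate one, and I expect it to be the main obstacle. The issue is that in $\wF_s$ the pair $(i_s,j_s)$ has been removed, so the element of $\wF_s$ immediately following $i_r$ is $i_{r+1}$ only if $r+1\ne s$; if $r+1=s$ it is instead $i_{r+2}$ (when $r+2\le q$). In the first case ($r+1\ne s$) the argument above applies verbatim using $j_r=j_{r+1}$. In the troublesome case $r+1=s$, we know $j_r=j_{r+1}=j_s$ and also $j_s=j_{s+1}$ (from $s\notin B(\wF,\wm)$), hence $j_r=j_{r+1}=j_{r+2}$; so the ``next slot after $i_r$ in $\wF_s$'' is $(i_{r+2},j_{r+2})$ with $j_{r+2}=j_r$, and the same exponent computation — now with $k:=\min\{l\mid l>i_r,\ a_l>0\}>i_{r+2}$ since $j_r=j_{r+2}$ means $a_l=0$ for $i_r<l\le i_{r+2}$ — shows $\sum_{l\le i_{r+2}}b_l = 1+\sum_{l\le i_r}a_l = j_r = j_{r+2}$, again violating (b). (If $r+2>q$, i.e.\ $q=s+1=r+2$, then $\wF_s=\{(i_1,j_1),\dots,(i_r,j_r)\}$ has $i_r$ as its largest index and a separate, easier check is needed: one verifies that removing the top slot keeps things admissible, contradicting nothing — in fact in that sub-case one should double-check whether $r\in B(\wF_s,\wm)$ can happen, but since $j_r=j_{r+1}$ the slot structure still forces failure of (b) relative to the \emph{original} reading; I would handle this boundary case explicitly.) Assembling these cases gives both non-memberships, proving the lemma.
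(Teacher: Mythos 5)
There is a genuine gap, and it sits at the very first step. You assert that condition (a) of Definition~\ref{admissible2} ``always holds'' for $(\wF_r,\wm_{\<i_r\>})$ because $\nu(\m_{\<i_r\>})\ge i_r$ by Lemma~\ref{m_i_r}. But condition (a) for $(\wF_r,\wm_{\<i_r\>})$ (with $r<q$) requires $i_q<\nu(\m_{\<i_r\>})$, and $\nu(\m_{\<i_r\>})\ge i_r$ is far weaker than that: since $\m_{\<i_r\>}=g(\fb_{i_r}(\m))$ and not $\fb_{i_r}(\m)$ itself, its support can be much shorter than that of $\m$. The paper's own example (the ideal $I'$ containing $x_1^2x_2$, where $\wm_{\<2\>}=x_{1,1}x_{1,2}x_{2,3}$ and $(\wF_2,\wm_{\<2\>})$ fails (a)) shows this really happens. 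Consequently your deduction ``$r\notin B(\wF,\wm)$ forces $j_r=j_{r+1}$, and $s\notin B(\wF,\wm)$ forces $j_s=j_{s+1}$'' is unjustified: non-membership can instead be caused by failure of (a), with $j_r<j_{r+1}$.

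This is not a repairable cosmetic issue, because the case you cannot see is exactly the delicate one. Suppose $s=r+1$, $j_r=j_{r+1}<j_{r+2}$, and $s\notin B(\wF,\wm)$ only because $(\wF_s,\wm_{\<i_s\>})$ fails (a). Then your claim $j_r=j_{r+2}$ is false, and in fact removing $(i_s,j_s)$ \emph{repairs} the (b)-defect: if $(\wF_{r,s},\wm_{\<i_r\>})$ satisfied (a), the exponent computation would make it satisfy (b) as well, so your purely condition-(b) bookkeeping cannot rule out $r\in B(\wF_s,\wm)$. What actually saves the lemma here is a condition-(a) argument: since $s\ne q$ (Lemma~\ref{lem for adm}(ii)), the index $i_q$ survives in $\wF_{r,s}$; failure of (a) for $(\wF_s,\wm_{\<i_s\>})$ means $\nu(\m_{\<i_s\>})\le i_q$, and since $j_r=j_s$ gives $\m_{\<i_r\>}=(\m_{\<i_s\>})_{\<i_r\>}$ by Lemma~\ref{lem for adm}(v), one gets $\nu(\m_{\<i_r\>})\le\nu(\m_{\<i_s\>})\le i_q$, so $(\wF_{r,s},\wm_{\<i_r\>})$ fails (a). The paper's proof is a contradiction argument built precisely around this transmission of the (a)-failure (and it also disposes of the case where $r\notin B(\wF,\wm)$ itself comes from (a), again using $s\ne q$). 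Your easier branch ($s\notin B(\wF_r,\wm)$) has the same lacuna but can be patched by the observation that $i_q$ remains in $\wF_{r,s}$; the branch $r\notin B(\wF_s,\wm)$ with $s=r+1$ cannot be handled without the Lemma~\ref{lem for adm}(v) step, which your proposal never invokes. Your concluding boundary discussion (``if $r+2>q$\,\dots'') is also inconclusive as written.
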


\begin{proof}
It suffices to prove $r \not \in B(\wF_s, \wm)$. For the contrary, assume that 
$(\wF_{r,s}, \wm_{\<i_r\>})$ is admissible, in particular, it satisfies the condition (a) of
Definition~\ref{admissible2}. 
Since $s \ne q$ by Lemma~\ref{lem for adm} (ii), $(\wF_r, \wm_{\<i_r\>})$ also satisfies the condition (a). 
Since $r \not \in B(\wF,\wm)$ now, we have $j_r =j_{r+1}$ by  Lemma~\ref{lem for adm} (iii). 
Hence the assumption $r  \in B(\wF_s, \wm)$ implies that $s=r+1$ and $(j_r =) \, j_{r+1} < j_{r+2}$. 
If $(\wF_s, \wm_{\<i_s\>})$ satisfies  the condition (a) of Definition~\ref{admissible2}, then we have 
$s \in B(\wF,\wm)$ and it contradicts the assumption. 
Hence $(\wF_s, \wm_{\<i_s\>})$ does not satisfy (a), that is, $\nu(\m_{\<i_s\>}) \le i_q$.  
Since $\m_{\<i_r\>}= (\m_{\<i_s\>})_{\<i_r\>}$ by  Lemma~\ref{lem for adm} (v), 
we have $\nu(\m_{\<i_r\>}) \le \nu(\m_{\<i_s\>}) \le i_q$ and $(\wF_{r,s}, \wm_{\<i_r\>})$ 
does not satisfy (a). This is a contradiction.  
\end{proof}

Hence, when $r, s \not \in B(\wF,\m)$,  $\partial \circ \partial(e(\wF,\m))-C=0$ is easy. 
So it remains to prove the case where  $r \not \in B(\wF, \wm)$ but $s \in B(\wF, \wm)$.  

\begin{lem}\label{case 3}
Assume that $r \not \in B(\wF, \wm)$ and $s \in B(\wF, \wm)$. 
Then we always have $s \in B(\wF_r, \wm)$. 
Moreover, $r  \in B(\wF_s, \wm_{\<i_s\>})$ if and only if  $r \in B(\wF_s, \wm)$. 
If this is the case, we have  $\m_{\<i_r\>}= (\m_{\<i_s\>})_{\<i_r\>}$. 
\end{lem}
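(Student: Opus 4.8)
The plan is as follows. Part~(1) uses only $s\in B(\wF,\wm)$: since $(\wF_s,\wm_{\<i_s\>})$ is admissible and $\wF_{r,s}=\wF_s\setminus\{(i_r,j_r)\}$ is obtained by deleting one element, conditions~(a) and~(b) of Definition~\ref{admissible2} pass to the smaller index set $\wF_{r,s}$ --- condition~(a) can only improve when an element of $\wF$ is removed, and condition~(b) is a separate requirement on each pair. Hence $(\wF_{r,s},\wm_{\<i_s\>})$ is admissible, i.e.\ $s\in B(\wF_r,\wm)$.

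For parts~(2) and~(3) I would first record, via Lemma~\ref{lem for adm}(ii), that $q\in B(\wF,\wm)$, hence $r\ne q$ and so $r<q$; in particular $i_q$ is the largest first coordinate occurring in $\wF_r$, and it remains in $\wF_{r,s}$ unless $s=q$. The argument then proceeds configuration by configuration, sorted by whether $i_r<i_s$ or $i_r>i_s$, whether $j_r=j_s$ or $j_r\ne j_s$ (recall Lemma~\ref{rem for adm}(i)), and whether $(\wF_r,\wm_{\<i_r\>})$ satisfies condition~(a); in each configuration I aim to do one of two things. \emph{Either} I prove the identity $\m_{\<i_r\>}=(\m_{\<i_s\>})_{\<i_r\>}$ directly --- then $(\wF_{r,s},(\wm_{\<i_s\>})_{\<i_r\>})=(\wF_{r,s},\wm_{\<i_r\>})$, so the equivalence in~(2) is automatic and~(3) holds; \emph{or} I show that neither $(\wF_{r,s},\wm_{\<i_r\>})$ nor $(\wF_{r,s},(\wm_{\<i_s\>})_{\<i_r\>})$ is admissible, so that~(2) holds with both sides false and~(3) is vacuous.

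The identity is obtained in exactly two situations. If $j_r=j_s$, Lemma~\ref{lem for adm}(v) (applied with the smaller of $i_r,i_s$ in the role of its index ``$r$'') gives $\fb_{i_r}(\m)=\fb_{i_r}(\fb_{i_s}(\m))$ (or its mirror), whence $\m_{\<i_r\>}=(\m_{\<i_s\>})_{\<i_r\>}$. If $i_r<i_s$, $j_r<j_s$, $s=q$, and $(\wF_r,\wm_{\<i_r\>})$ violates~(a) --- so $\nu(\m_{\<i_r\>})\le i_q$ --- then Lemma~\ref{lem for adm}(iv) gives $(\m_{\<i_s\>})_{\<i_r\>}=(\m_{\<i_r\>})_{\<i_q\>}$, and as $i_q\ge\nu(\m_{\<i_r\>})$ the convention $\m'_{\<i\>}:=\m'$ for $i\ge\nu(\m')$ forces $(\m_{\<i_r\>})_{\<i_q\>}=\m_{\<i_r\>}$, giving the identity. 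In all remaining configurations $\wF_{r,s}$ still carries the obstruction of $(\wF_r,\wm_{\<i_r\>})$: if $(\wF_r,\wm_{\<i_r\>})$ violates~(a), then necessarily $s\ne q$ (otherwise $i_r<i_s=i_q$ and we are in the previous case), so $i_q\in\wF_{r,s}$, and since $j_r\ne j_s$ Lemma~\ref{lem for adm}(iv) applies and $\nu((\m_{\<i_s\>})_{\<i_r\>})=\nu((\m_{\<i_r\>})_{\<i_s\>})\le\nu(\m_{\<i_r\>})\le i_q$, so both pairs fail~(a); if instead $(\wF_r,\wm_{\<i_r\>})$ satisfies~(a) but not~(b), then (by the computation inside the proof of Lemma~\ref{lem for adm}(iii)) $j_r=j_{r+1}$ and the defect of~(b) is the single pair $(i_{r+1},j_{r+1})$, and because $j_r\ne j_s$ we have $r+1\ne s$ (automatic when $s<r$), so $(i_{r+1},j_{r+1})\in\wF_{r,s}$; one then checks that the partial sum $\sum_{l\le i_{r+1}}$ of the exponents is the same for $\m_{\<i_r\>}$ and for $(\m_{\<i_s\>})_{\<i_r\>}$ --- rewrite the latter by Lemma~\ref{lem for adm}(iv) and invoke condition~(b) of the admissible pair $(\wF_s,\wm_{\<i_s\>})$ --- so the defect of~(b) persists for both monomials.

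The step I expect to be the main obstacle is the interference of the operator $g$: one works not with the monomial quotients $\fb_i(\m)$ but with $\m_{\<i\>}=g(\fb_i(\m))$, so transporting conditions~(a) and~(b) between $\m_{\<i_r\>}=g(\fb_{i_r}(\m))$ and $(\m_{\<i_s\>})_{\<i_r\>}=g(\fb_{i_r}(g(\fb_{i_s}(\m))))$ is not formal. What makes it work is Lemma~\ref{m_i_r}, together with the observation already used inside the proof of Lemma~\ref{lem for adm}(iii) --- namely that whenever $(\wF,\wm_{\<i\>})$ satisfies~(a) the exponents of $\m_{\<i\>}$ agree with those of $\fb_i(\m)$ on all variables below $\nu(\m_{\<i\>})$. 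The delicate bookkeeping concentrates on the boundary configuration $s=q$, where $\wF_{r,s}$ loses its top index $i_q$ and admissibility is no longer automatically destroyed --- precisely the configuration in which the identity of part~(3) has to be invoked non-vacuously.
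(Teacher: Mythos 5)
Your strategy is essentially the paper's, repackaged. The two configurations in which you derive the identity $\m_{\<i_r\>}=(\m_{\<i_s\>})_{\<i_r\>}$ are exactly the paper's Case~1 (condition (a) fails, $s=q$, $j_r<j_q$: Lemma~\ref{lem for adm}(iv) plus the convention $\m'_{\<i\>}=\m'$ for $i\ge\nu(\m')$) and Case~2 ($j_r=j_s$: Lemma~\ref{lem for adm}(v)). The difference is logical organization: the paper settles the forward implication of the equivalence in one line, from $\nu((\m_{\<i_s\>})_{\<i_r\>})\le\nu(\m_{\<i_r\>})$, and only then assumes $r\in B(\wF_s,\wm)$, which \emph{forces} $s=q$ (resp.\ $s=r+1$) in the two cases; you instead run an unconditional case split and, in the non-identity configurations, show that both $(\wF_{r,s},\wm_{\<i_r\>})$ and $(\wF_{r,s},(\wm_{\<i_s\>})_{\<i_r\>})$ are non-admissible, so the equivalence holds with both sides false. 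That is a legitimate alternative, but it buys nothing and costs the extra exponent bookkeeping you acknowledge at the end.

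One step does not work as written. In the configuration $j_r=j_s$ with $s<r$, the ``mirror'' of Lemma~\ref{lem for adm}(v) gives $\m_{\<i_s\>}=(\m_{\<i_r\>})_{\<i_s\>}$, which is \emph{not} the identity $\m_{\<i_r\>}=(\m_{\<i_s\>})_{\<i_r\>}$ you assert; the ``whence'' there is unjustified. The case is in fact vacuous: since $s\in B(\wF,\wm)$, the pair $(\wF_s,\wm_{\<i_s\>})$ is admissible, so Lemma~\ref{lem for adm}(iii) applies and, as $s<r\le q$ excludes $s=q$, yields $j_s<j_{s+1}\le j_r$ by Lemma~\ref{rem for adm}(i), contradicting $j_s=j_r$ --- but your proof must record this vacuity rather than appeal to the mirror identity. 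Two smaller inaccuracies in the ``(b) fails'' branch: the defect of (b) for $(\wF_r,\wm_{\<i_r\>})$ need not be the \emph{single} pair $(i_{r+1},j_{r+1})$ (if $j_r=j_{r+1}=j_{r+2}$ there are several), though only that one defect is needed; and transferring the defect to $(\m_{\<i_s\>})_{\<i_r\>}$ must go through the dichotomy you only hint at: either $\nu((\m_{\<i_s\>})_{\<i_r\>})\le i_{r+1}$, in which case condition (a) already fails for $(\wF_{r,s},(\wm_{\<i_s\>})_{\<i_r\>})$ because $i_{r+1}$ (or a larger index) survives in $\wF_{r,s}$, or $\nu((\m_{\<i_s\>})_{\<i_r\>})>i_{r+1}$, in which case its exponents below $i_{r+1}$ agree with those of $\fb_{i_r}(\m_{\<i_s\>})$ and condition (b) of the admissible pair $(\wF_s,\wm_{\<i_s\>})$ at $(i_r,j_r)$ and $(i_{r+1},j_{r+1})$ gives the failing partial sum. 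With these repairs the argument goes through.
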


\begin{proof}
The first assertion is clear,  and we will show the second and the third (at the same time).   
Since $\nu((\m_{\<i_s\>})_{\<i_r\>}) \le \nu(\m_{\<i_r\>})$, $r  \in B(\wF_s, \wm_{\<i_s\>})$ implies  
$r \in B(\wF_s, \wm)$. So,  assuming that $r \in B(\wF_s, \wm)$, we will show that $\m_{\<i_r\>}= 
(\m_{\<i_s\>})_{\<i_r\>}$, which implies $r \in B(\wF_s, \wm_{\<i_s\>})$.  
Since $r \not \in B(\wF, \wm)$, $(\wF_r, \wm_{\<i_r\>})$ 
does not satisfy at least one of conditions (a) and (b) in Definition~\ref{admissible2}. 

\noindent {\it Case 1.} Assume that $(\wF_r, \wm_{\<i_r\>})$ fails (a), 
that is,  $\nu(\m_{\<i_r\>}) \le i_q$. 
Since $r \in B(\wF_s, \wm)$ now, we have $s=q$ and $F_s = \{ i_1,\dots ,i_{q-1} \}$.
Hence $(i_r \le) \, i_{q-1} < \nu(\m_{\<i_r\>})$. 
Since $i_r < \nu(\m_{\<i_r\>}) \le i_q$, we have $j_r < j_q$, and hence 
$\fb_{i_r} (\fb_{i_q}(\m))=\fb_{i_q}(\fb_{i_r}(\m))$. 
Therefore, 
$(\wm_{\<i_q\>})_{\
<i_r\>}=(\wm_{\<i_r\>})_{\<i_q\>}=\wm_{\<i_r\>},$
where the first equality follows from Lemma~\ref{lem for adm} (v), and 
the second follows from $\nu(\m_{\<i_r\>}) \le i_q$. 

\noindent{\it Case 2.} Assume that $(\wF_r, \wm_{\<i_r\>})$ fails (b) of  Definition~\ref{admissible2}. 
By Lemma~\ref{lem for adm} (iii), we have $s=r+1$ and $j_r=j_s$. 
Hence $\m_{\<i_r\>}= (\m_{\<i_s\>})_{\<i_r\>}$ by Lemma~\ref{lem for adm} (v). 
\end{proof}

\noindent{\it The continuation of the proof of Proposition~\ref{diff}.} 
Finally, consider the case  $r \not \in B(\wF, \wm)$ and $s \in B(\wF, \wm)$. Then we have 
$$\partial(e(\wF,\wm))=(-1)^r  \z  \cdot e(\wF_r, \wm)+(-1)^s \z  \cdot e(\wF_s, \wm)-(-1)^s 
\z  \, e(\wF_s, \wm_{\<i_s\>})+C',$$
where $C'$ is an $\wS$-linear combination of the elements $e(\wF_t, -)$ with $t \ne r,s$. 

Assume that  $r \in B(\wF_s, \wm_{\<i_s\>})$ (equivalently, $r \in B(\wF_s, \wm)$). 
Then we have $r < s$ as shown in the proof of Lemma~\ref{case 3}, and 
\begin{eqnarray*}
\partial \circ \partial(e(\wF,\wm))
&=&-(-1)^{r+s}  \z  \cdot e(\wF_{r,s}, \wm) + (-1)^{r+s}  \z  \cdot e(\wF_{r,s}, \wm_{\<i_s\>})\\
& & +(-1)^{r+s} \z  \cdot e(\wF_{r,s}, \wm)-(-1)^{r+s} \z  \cdot e(\wF_{r,s}, \wm_{\<i_r\>})\\
& &-(-1)^{r+s} \z  \cdot e(\wF_{r,s}, \wm_{\<i_s\>})
+(-1)^{r+s} \z  \cdot e(  \wF_{r,s}, (\wm_{\<i_s\>})_{\<i_r\>} ) +C,
\end{eqnarray*}
where $C$ is an $\wS$-linear combination of the basis elements $e(\wF_{t,u}, -)$ with $\{t,u\} \ne \{r,s\}$.  
Since $\m_{\<i_r\>}=(\m_{\<i_s\>})_{\<i_r\>}$ by Lemma~\ref{case 3}, 
we have  $\partial \circ \partial(e(\wF,\wm))-C=0$. 

Next, assume that  $r \not \in B(\wF_s, \wm_{\<i_s\>})$ (equivalently, $r \not \in B(\wF_s, \wm)$). 
Then both  
$$-(-1)^{r+s}  \z  \, e(\wF_{r,s}, \wm_{\<i_r\>}) \quad  \text{and} 
\quad (-1)^{r+s} \z  \cdot e( \wF_{r,s}, (\wm_{\<i_s\>})_{\<i_r\>} )$$ 
do not appear in the above expansion of 
$\partial \circ \partial(e(\wF,\wm))$.   
Hence $\partial \circ \partial(e(\wF,\wm))-C=0$ remains true. 
\qed

\medskip

Next, we will show that $\wP_\bullet$ is acyclic. To do so, we need some preparation.

\newcommand\nvert{\ {\ooalign{$\slash$\crcr\hss$\vert$\hss}}\ }

Let $\succ$ be the lexicographic order on the monomials of $S$ with
$x_1 \succ x_2 \succ \cdots \succ x_n$.  
In the rest of this section, $\succ$ means this order.  
In the sequel, the minimal monomial generators of the Borel fixed ideal $I$ are denoted by $\m_1, \dots, \m_t$ with $\m_1 \succ \m_2 \succ \cdots \succ \m_t$.

Let $I_r$ denote the ideal generated by $\m_1,\dots, \m_r$.

\begin{lem}\label{sec:bfilt}
Each $I_r$ is also Borel fixed.
\end{lem}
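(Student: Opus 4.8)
The plan is to reduce everything to a single claim about the Eliahou--Kervaire generator $g(-)$ recalled at the start of this section. Fix $r$. Since $\m_1,\dots,\m_r$ form an antichain under divisibility (being among the minimal generators of $I$), we have $G(I_r)=\{\m_1,\dots,\m_r\}$, so it suffices to check the Borel condition for these: given $k\le r$, an index $i$ with $x_i\mid\m_k$, and $j<i$, I must show $\m'':=(x_j/x_i)\m_k\in I_r$. Since $I$ is Borel fixed, $\m''\in I$, so I may write its Eliahou--Kervaire decomposition $\m''=g\cdot v$ with $g:=g(\m'')\in G(I)$ and $\nu(g)\le\mu(v)$. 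The whole proof will then follow from the claim that $g\succ\m_k$ in the lexicographic order: for this forces $g=\m_l$ with $l<k\le r$, and since $g\mid\m''$ we get $\m''\in(\m_l)\subseteq I_r$.

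To prove $g\succ\m_k$ I would argue with exponent vectors, writing $u_a$ for the exponent of $x_a$ in a monomial $u$. First I record the elementary facts about $\m''$: since $j<i$, passing from $\m_k$ to $\m''$ raises the $x_j$-exponent by one, lowers the $x_i$-exponent by one, and changes nothing else; hence $(\m'')_a=(\m_k)_a$ for $a<j$, $(\m'')_j=(\m_k)_j+1$, and $(\m'')_i=(\m_k)_i-1<(\m_k)_i$. In particular $\m_k\nmid\m''$, so $g\neq\m_k$ (recall $g\mid\m''$), and if $v=1$ then $g=\m''\succ\m_k$ and we are done. Otherwise set $b:=\mu(v)$; from $\nu(g)\le b$ I get that every variable occurring in $g$ has index $\le b$ and that $v$ has no variable of index $<b$, so $(g)_a=(\m'')_a$ for all $a<b$. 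Then I split into cases according to the position of $b$ relative to $j$. If $b<j$, then $(g)_a=(\m'')_a=(\m_k)_a$ for $a<b$, $(g)_b\le(\m'')_b=(\m_k)_b$, and $(g)_a=0$ for $a>b$; thus $g\mid\m_k$, forcing $g=\m_k$ --- impossible --- so in fact $b\ge j$. If $b>j$, then $(g)_a=(\m_k)_a$ for $a<j$ and $(g)_j=(\m'')_j=(\m_k)_j+1>(\m_k)_j$, so $g\succ\m_k$. If $b=j$, then again $(g)_a=(\m_k)_a$ for $a<j$, and if $(g)_j\le(\m_k)_j$ the same reasoning would give $g\mid\m_k$, impossible; hence $(g)_j>(\m_k)_j$ and once more $g\succ\m_k$.

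The step I expect to require the most care is the borderline case $\mu(v)\le j$: there one has to play off the structural inequality $\nu(g)\le\mu(v)$ against the minimality of $\m_k$ in $G(I)$ to exclude $g\mid\m_k$, and it is precisely this exclusion that upgrades the weak inequality $g\succeq\m_k$ to the strict $g\succ\m_k$ needed to place $g$ strictly before $\m_k$ in the list $\m_1\succ\cdots\succ\m_t$. Everything else is routine manipulation of exponent vectors, and the argument uses nothing beyond the Eliahou--Kervaire decomposition recalled above.
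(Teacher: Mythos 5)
Your proof is correct and takes essentially the same route as the paper's: the key point in both is that the Eliahou--Kervaire factor $g$ of the exchanged monomial agrees with it in the low-index variables (by minimality of the generators, via the antichain/divisibility argument), forcing $g \succ \m_k$ in lex order and hence $g \in \{\m_1,\dots,\m_{k-1}\}$. The only differences are presentational: the paper argues by induction on $r$, checking just $\m_r$ and citing the argument of Lemma~\ref{m_i_r}, whereas you verify all generators directly with an explicit case analysis on $\mu(v)$.
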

\begin{proof}
We use induction on $r$.
It suffices to show that for $i, j$ with $1 \le i < j$, $x_j \mid \m_r$ implies $(x_i /x_j) \m_r \in I_r$.
Set $\m_r' := (x_i/x_j)\m_r$.
Since $I$ is Borel fixed, $\m_r'\in I$.
By the similar argument as in the proof of Lemma~\ref{m_i_r},
we see that for $k \le i$ the exponent of $x_k$ in $g(\m_r')$
coincides with that in $\m_r'$.
Thus the exponent of $x_k$ in $g(\m_r')$ is equal to that in $\m_r$ if $k < i$ and is more than
if $k = i$. This implies that $g(\m_r') \succ \m_r$ and hence $g(\m_r') \in I_r$.
Therefore $\m_r' \in I_r$.
\end{proof}

For simplicity, we set  $\wt I_r := \bpol(I_r) (\subset \wt S)$.

\begin{lem}\label{sec:bpolfilt}
Let $\m_r = \prod_{k=1}^nx_k^{a_k}$ and set $b_i := 1 + \sum_{k=1}^i a_k$.
Then it follows that $(\wt I_{r-1}: \wt \m_r) = (x_{1,b_1},\dots ,x_{\gl,b_\gl})$, where $\gl := \nu(\m_r)-1$.
\end{lem}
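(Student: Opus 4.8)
I want to show the colon ideal $(\wt I_{r-1} : \wt\m_r)$ equals the monomial ideal $(x_{1,b_1},\dots,x_{\gl,b_\gl})$ with $\gl=\nu(\m_r)-1$. Since all ideals in sight are monomial, it suffices to check both inclusions on monomials, and in fact (because $\wt I_{r-1}$ is generated by the squarefree monomials $\wt\m_1,\dots,\wt\m_{r-1}$) it is enough to analyze, for each $s<r$, the monomial $\wt\m_s / \gcd(\wt\m_s,\wt\m_r)$ and show it is divisible by some $x_{i,b_i}$ with $i\le\gl$; conversely I must exhibit, for each $i\le\gl$, some $s<r$ with $\wt\m_s$ dividing $x_{i,b_i}\cdot\wt\m_r$.

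\textbf{The ``$\supseteq$'' direction.} Fix $i$ with $1\le i\le\gl=\nu(\m_r)-1$. Consider $\m := \fb_i(\m_r) = (x_i/x_k)\m_r$ where $k=\min\{l>i : a_l>0\}$; since $i<\nu(\m_r)$ this is well defined and $\m\in I$ because $I$ is Borel fixed. By Lemma~\ref{m_i_r}, $g(\m)$ has the same exponents as $\m$ in the variables $x_l$ with $l\le i$, so the exponent of $x_i$ in $g(\m)$ strictly exceeds $a_i$; arguing exactly as in Lemma~\ref{sec:bfilt} this forces $g(\m)\succ\m_r$, hence $g(\m)\in\{\m_1,\dots,\m_{r-1}\}$, say $g(\m)=\m_s$ with $s<r$. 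Now I claim $\wt\m_s$ divides $x_{i,b_i}\cdot\wt\m_r$. Indeed $\m_s=g(\m)$ divides $\m=\fb_i(\m_r)$, and by Lemma~\ref{rem for adm}(ii) (with the role of the admissible pair played by the singleton $\{(i,b_i)\}$, which is admissible since $b_i=1+\sum_{l\le i}a_l$) we have $x_{k,b_i}\cdot\BoX(\fb_i(\m_r))=x_{i,b_i}\cdot\wt\m_r$; combining, $\wt\m_s=\BoX(g(\m))$ divides $\BoX(\fb_i(\m_r))$ times the appropriate variable and hence divides $x_{i,b_i}\cdot\wt\m_r$. Therefore $x_{i,b_i}\in(\wt I_{r-1}:\wt\m_r)$. (One must check the divisibility statement $\BoX(g(\m))\mid\BoX(\m)$ carefully, using that $\BoX$ is multiplicative-compatible with the factorization $\m=\m_1\m_2$, $\nu(\m_1)\le\mu(\m_2)$; this is the kind of bookkeeping the ``$\BoX$ of both sides'' paragraph in \S1 was setting up.)

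\textbf{The ``$\subseteq$'' direction.} Take $s<r$ and set $w:=\wt\m_s/\gcd(\wt\m_s,\wt\m_r)$; I must show $w$ is divisible by some $x_{i,b_i}$ with $i\le\gl$. Since $\m_s\succ\m_r$ in lex order, there is a least index $p$ with the exponent of $x_p$ in $\m_s$ strictly larger than $a_p$, and for $l<p$ the exponents agree. If $p\le\gl$: the variables $x_{p,b_{p-1}+1},\dots,x_{p,b_p}$ occur in $\wt\m_r$, and since $\m_s$ has strictly more factors of $x_p$, the variable $x_{p,b_p}$ (the ``last slot'' of the $x_p$-block of $\wt\m_r$, equivalently the first slot beyond $\wt\m_s$'s agreement region) divides $\wt\m_s$ but, I claim, not $\gcd(\wt\m_s,\wt\m_r)$ in a way that leaves $x_{i,b_i}\mid w$ for $i=p$ — here one uses that $\wt\m_s$ and $\wt\m_r$ share exactly the prefix-blocks $x_{l,\cdot}$ for $l<p$ and $\wt\m_r$ contributes $x_{p,b_p}$ while $\wt\m_s$'s $p$-block starts at the same place but is longer, so $x_{p,b_p}\mid\wt\m_s$ as well; comparing, $x_{p,b_p}\mid w$ is the borderline case that needs the index $b_p=1+\sum_{l\le p}a_l$ computed from $\m_r$. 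If instead $p>\gl=\nu(\m_r)-1$, i.e. $p\ge\nu(\m_r)$: then $\m_s$ and $\m_r$ agree in all variables $x_l$, $l<\nu(\m_r)$, but $\m_s\succ\m_r$ and $\m_r,\m_s$ are distinct minimal generators — one shows $\m_r$ would divide $\m_s$ (contradiction) unless $\m_s$ has a larger exponent at some $x_l$ with $l<\nu(\m_r)$, so this case cannot occur. Thus always $p\le\gl$ and $x_{p,b_p}\mid w$, proving $w\in(x_{1,b_1},\dots,x_{\gl,b_\gl})$, hence $\wt\m_s\in(x_{1,b_1},\dots,x_{\gl,b_\gl})+(\wt\m_r)$, which gives the inclusion.

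\textbf{Main obstacle.} The routine-looking but genuinely delicate point is the exact combinatorics of $\BoX$ on both sides of the divisibility $\wt\m_s\mid x_{i,b_i}\wt\m_r$: $\BoX$ is not a ring map and ``$\BoX(\m'/\m'')\ne\BoX(\m')/\BoX(\m'')$'' in general, so one cannot just divide. The safe route is to pass through the $\alpha$-expression \eqref{alpha expression}: write $\m_r=\prod x_{\alpha_l}$, track precisely which slots $(i,j)$ appear in $\wt\m_r$ versus $\wt\m_s=\BoX(g(\fb_i(\m_r)))$, and use Lemma~\ref{rem for adm}(ii) as the one algebraic identity that legitimately relates $\BoX(\fb_i(\m))$ to $\wt\m$. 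Getting the index $j=b_i$ to match on the nose — and confirming it is the \emph{smallest} $j$ for which $x_{i,j}\notin\wt\m_r$ — is where all the care goes; everything else is the lex-order argument already rehearsed in Lemma~\ref{sec:bfilt}.
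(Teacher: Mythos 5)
Your proof is correct and follows essentially the same route as the paper's: the inclusion $\subseteq$ via the generators $\lcm(\wm_s,\wm_r)/\wm_r$ and the first index where $\m_s$ lex-exceeds $\m_r$ (which must be $<\nu(\m_r)$, else $\m_r\mid\m_s$), and the inclusion $\supseteq$ via $(\m_r)_{\<i\>}=g(\fb_i(\m_r))$, Lemma~\ref{m_i_r} to get $(\m_r)_{\<i\>}\succ\m_r$, and Lemma~\ref{rem for adm}(ii) to get $(\wm_r)_{\<i\>}\mid x_{i,b_i}\wm_r$. The only cosmetic blemish is the closing clause ``$\wm_s\in(x_{1,b_1},\dots,x_{\gl,b_\gl})+(\wm_r)$'', which should simply read that each generator $\lcm(\wm_s,\wm_r)/\wm_r$ of the colon ideal lies in $(x_{1,b_1},\dots,x_{\gl,b_\gl})$, exactly as you set up at the start.
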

\begin{proof}
Since $\wm_r$ is a monomial and $\wt I_{r-1}$ is a monomial ideal, we have
$$
(\wt I_{r-1} : \wm_r) =\left( \frac{\lcm(\wm_1,\wm_r)}{\wm_r},\dots ,\frac{\lcm(\wm_{r-1},\wm_r)}{\wm_r} \right).
$$
Let $s$ be an integer with $1\le s \le r-1$. Since $\m_s \succ \m_r$,
there exists an integer $l$ such that the exponent of $x_k$ in $\m_s$
is equal to that in $\m_r$ for $k < l$ and is more than if $k = l$.
Thus $x_{l,b_l} \mid \wm_s$ and $x_{l,b_l} \nmid \wm_r$.
Hence $x_{l,b_l} \mid \left(\lcm(\wm_s ,\wm_r)/\wm_r \right)$.
Since $\m_s, \m_r \in G(I)$, $l$ must be less than $\nu(\m_r)$; otherwise
$\m_r \mid \m_s$, which is a contradiction.
Hence it follows that $\lcm(\wm_s,\wm_r) /\wm_r \in (x_{1,b_1},\dots ,x_{\gl,b_\gl})$,
and therefore $(\wt I_{r-1}: \wt \m_r) \subset (x_{1,b_1},\dots ,x_{\gl,b_\gl})$.

Now let us consider the inverse inclusion.
Let $s$ be the integer with $1 \le s \le \gl$. Then there exists $j$ with $j > s$
such that $a_j >0$.
By Lemma~\ref{rem for adm} (ii), we have $(\wm_r)_{\< s\>} \mid x_{s,b_s}\wm_r$.
On the other hand, Lemma~\ref{m_i_r} implies $(\m_r)_{\< s\>} \succ \m_r$,
and hence $(\wm_r)_{\< s\>} \in \wt I_{r-1}$.
It follows that $x_{s,b_s}\wm_r \in \wt I_{r-1}$, or equivalently $x_{s,b_s} \in (\wt I_{r-1}: \wt \m_r)$.
Therefore we conclude that $(\wt I_{r-1}: \wt \m_r) \supseteq (x_{1,b_1},\dots ,x_{\gl,b_\gl})$.
\end{proof}

The differential $\diff$ of the complex $\wt P_\bullet$, constructed in the above,
consists of the following 2 maps.
Let $\gd,\gd': \wt P_\bullet \to \wt P_\bullet$ be the family of $\wt S$-homomorphism
$\gd_{q+1}, \gd'_{q+1}:\wt P_{q+1} \to \wt P_q$ which sends $e(\wt F, \wm)$ 
with $\wF=\{ (i_1, j_1), \ldots, (i_q, j_q)\}$ and $q \ge 1$ to 
\begin{align*}
&\gd_{q+1}(e(\wF, \wm)) = \sum_{1 \le  r \le q } (-1)^r \cdot  x_{i_r, j_r} \cdot e(\wF_r, \wm), \\
&\gd'_{q+1}(e(\wF, \wm)) = \sum_{r \in B(\wF, \wm)}(-1)^r \cdot \frac{ x_{i_r, j_r} \cdot \wm}{\wm_{\<i_r\>}} \cdot e(\wF_r, \wm_{\<i_r\>}),
\end{align*}
respectively. In the case $q=0$, we set $\gd(e(\void ,\wm)) = 0$ and $\gd'(e(\void,\wm)) = - \wm$.
Then it follows that $\diff = \gd - \gd'$.

To show that $\wt P_\bullet$ is acyclic, we make use of the technique by means of mapping cones as the one by Peeva and Stillman in \cite{PS}. Let $(U_\bullet, \diff^U),(V_\bullet, \diff^V)$ be a complex and
$f:U_\bullet \to V_\bullet$ be a homomorphism of complexes.
The {\it mapping cone} $\Con_\bullet(f)$ of $f$ is a complex such that $\Con_q(f) := V_q \oplus U_{q-1}$
for all $q$ and the differential $\diff^{\Con(f)}$ are defined by
$\diff^{\Con(f)}(v,u) = (\diff^V(v) + f(u), -\diff^U(u))$ for $(v,u) \in \Con_q(f)$.
It is noteworthy that $V_\bullet$ is then a subcomplex of $\Con_\bullet(f)$.

\bigskip

\begingroup
\renewcommand{\proofname}{The proof of Theorem~\ref{main}.}
\begin{proof}
We use induction on $t = \# G(I)$. The case $t = 1$ is trivial. Assume $t > 1$.
Let $\wt I, \wt I_r, \wt \m_r$, where $1 \le r \le \# G(I)$, be as above.
There is the following exact sequence
$$
0 \longto \left(\wt S/(\wt I_{t-1}:\wt \m_t)\right) (-\wt\ba) \overset{\wt \m_t}{\longto} \wt S/\wt I_{t-1} \longto
\wt S/\wt I \longto 0,
$$
where $\wt\ba$ denotes the multidegree of $\wm_t$.
By Lemma~\ref{sec:bpolfilt}, $\wt I_{t-1}:\wt \m_t = (x_{1,b_1},\dots ,x_{\gl,b_\gl})$ for suitable $\gl$ and $b_1,\dots b_\gl$,
and the Koszul complex 
$K_\bullet(x_{1,b_1},\dots ,x_{\gl,b_\gl})$ of the sequence $x_{1,b_1},\dots ,x_{\gl,b_\gl}$
is a minimal free resolution of $\wt S/(\wt I_{t-1} :\wt \m_t)$.
Set $\wt K_\bullet := \left(K_\bullet(x_{1,b_1},\dots ,x_{\gl,b_\gl})\right)(-\wt \ba)$.
Assigning each basis $x_{i_1,b_{i_1}}\wedge \cdots \wedge x_{i_q,b_{i_q}}$
of $\wt K_q$ to the one $e(\{ (i_1, b_{i_1}), \ldots, (i_q, b_{i_q})\},\wm_t)$
of $\wt P_{q+1}$, we have the injective map
$\wt K_q \embto \wt P_{q+1}$ for each $q$.
Via this map, the differential map of $\wt K_\bullet$ coincides with $\gd$.
By the inductive hypothesis, we obtain the minimal free resolution $\wt Q_\bullet$ of $\wt S/\wt I_{t-1}$
constructed in the same way as $\wt P_\bullet$.
The complex $\wt Q_\bullet$ is naturally regarded as a subcomplex of $\wt P_\bullet$, since an admissible pair $(\wF, \wm)$ for $\wt I_{t-1}$ is also a one for $\wt I_t$
and since the set $B(\wF,\wm)$ in $\wt I_{t-1}$ coincides with that in $\wt I_t$.
It is easy to verify that after regarding each $\wt K_q$ as a direct summand of $\wt P_{q+1}$, the map $-\gd'$ is then a homomorphism of complexes
from $\wt K_\bullet$ to $\wt Q_\bullet$ and is a lifting of
the map $\left(\wt S/(\wt I_{t-1}:\wt \m_t)\right) (-\wt\ba) \to \wt S/\wt I_{t-1}$ sending $1$ to $\wm_t$.

Now consider the mapping cone $\Con_\bullet(-\gd')$ of $-\gd'$.
It coincides with $\wt P_\bullet$. From a long exact sequence of homologies
induced by the one
$$
0 \longto \wt Q_\bullet \longto \Con_\bullet(-\gd') \longto \wt K_\bullet \bra{1} \longto 0,
$$
it follows that $\wt P_\bullet$ is a minimal $\ZZ^{n\times d}$-graded free resolution of $\wt S/\bpol(I)$, since
$$
H_0(\wt K_\bullet) \cong \left(\wt S/(\wt I_{t-1}:\wt \m_t)\right) (-\wt\ba)
\stackrel{\wm_t}{\longto} \wt S/\wt I_{t-1} \cong H_0(\wt Q_\bullet)
$$
is injective.
\end{proof}
\endgroup

\begin{rem}\label{regular decomp}
In their paper \cite{HT}, Herzog and Takayama explicitly gave a minimal free resolution of a monomial ideal with  
{\it linear quotients} admitting a {\em regular decomposition function}
(see \cite{HT} for the definitions). A Borel fixed ideal $I$ is a typical example with this property. 
However, while our  $\wI$ has  linear quotients, the decomposition function $\wg$ can not be  regular in general. 
For example, if  $I=(x^3, x^2y, xy^2, y^3)$, then 
$\wI=(x_1x_2x_3, x_1x_2y_3,x_1y_2y_3,  y_1y_2y_3)$ has linear quotients in several total orders. 
Checking each case, we see that $\wg$ is not regular for every order. 
Here we check this in two typical cases. First, consider the order $x_1x_2x_3 > x_1x_2y_3 > x_1y_2y_3 > y_1y_2y_3$, 
which gives linear quotients. We have $(x_1x_2x_3, x_1x_2y_3) : (x_1y_2y_3) =(x_2)$, $\wg(x_2(x_1y_2y_3))=x_1x_2y_3$, 
and $(x_1x_2x_3):(x_1x_2y_3)=(x_3) \not \subset (x_2)$. It means that $\wg$ is not regular in this case.  
Next, consider the order $x_1x_2y_3 > x_1x_2x_3  > x_1y_2y_3 > y_1y_2y_3$ which also gives  linear quotients. 
$(x_1x_2y_3, x_1x_2x_3, x_1y_2y_3): (y_1y_2y_3) = (x_1)$, $\wg(x_1(y_1y_2y_3))=x_1y_2y_3$ and 
$(x_1x_2y_3, x_1x_2x_3) :(x_1y_2y_3)=(x_2) \not \subset (x_1)$. Hence $\wg$ is not regular again. 

Anyway, we can not directly apply \cite{HT} to our case.  
\end{rem}

\section{Applications and Remarks}
Let $I \subset S$ be a Borel fixed ideal, and $\Theta \subset \wS$ the sequence defined in Introduction. 
In \cite{y10}, the second author has shown that $\wI:= \BoX(I)$ gives a polarization of $I$, that is, 
$\Theta$ forms an $(\wS/\wI)$-regular sequence. The proof there is rather direct, and uses the sequentially 
Cohen-Macaulay property of $\wS/\wI$. However we can give a new proof now.   

Recall that Eliahou and Kervaire \cite{EK} constructed a minimal $S$-free resolution of $I$, and free 
summands of their resolution are indexed by the admissible pairs for $I$. Here an admissible 
pair for $I$ is a pair $(F,\m)$ such that $\m \in G(I)$ and $F=\emptyset$ or 
$F = \{ i_1, \ldots ,i_q \} \subset \NN$ with $1 \le i_1 < \cdots < i_q < \nu(\m)$.
As remarked after the proof of Lemma~\ref{lem for adm}, 
there is a one-to-one correspondence between the admissible pairs for $\wI$ and those for $I$, 
and if $(\wF,\wm)$ corresponds to $(F,\m)$ then $\#\wF=\#F$. 
Hence we have 
\begin{equation}\label{Betti num}
\beta_{i,j}^{\wS}(\wt I) = \beta_{i,j}^S(I) 
\end{equation}
for all $i,j$, where $S$ and $\wS$ are considered to be $\ZZ$-graded by setting the degree of all variables to be $1$.

Of course, this equation is clear, if we know the fact that $\wI$ is a polarization of $I$ 
(\cite[Theorem~3.4]{y10}). Conversely, this equation induces the theorem. 

\begin{cor}[{\cite[Theorem 3.4]{y10}}]\label{new proof}
The ideal $\wI$ is a polarization of $I$.
\end{cor}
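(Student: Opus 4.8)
The plan is to deduce the polarization statement $\wS/(\wI+(\Theta)) \cong S/I$ with $\Theta$ regular purely from the numerical equality \eqref{Betti num} together with the general principle relating polarizations to Betti numbers. First I would recall the standard fact: if $J \subset \wS$ is a monomial ideal and $\ell_1,\dots,\ell_c$ is a sequence of linear forms of the type $x_{i,1}-x_{i,j}$ (a ``depolarization'' sequence), then $\beta^{\wS}_{i,j}(J) \ge \beta^{S}_{i,j}(J/(\Theta))$ always holds, and equality for all $i,j$ is equivalent to $\Theta$ being a regular sequence on $\wS/J$ (so that $J$ is an honest polarization). This is because $\wS/J \otimes_{\wS} \wS/(\Theta)$ computes $S/I$ on the nose by the very definition of $\BoX$ (the isomorphism $\wS/(\Theta)\cong S$ carries $\BoX(\m)$ to $\m$), and the minimal free resolution $\wP_\bullet$ of $\wS/\wI$ from Theorem~\ref{main} base-changes to a (not necessarily minimal) free complex over $S$ resolving $S/I$ whenever $\Theta$ is regular; conversely a rank count forces regularity.

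More concretely, here are the steps I would carry out. (1) By Theorem~\ref{main}, $\wP_\bullet$ is the minimal $\wS$-free resolution of $\wS/\wI$, so its ranks are the $\beta^{\wS}_{i,\bullet}(\wI)$, and by the bijection between admissible pairs for $\wI$ and those for $I$ (established right after Lemma~\ref{lem for adm}), these ranks agree with the ranks of the Eliahou–Kervaire resolution of $S/I$, i.e. with $\beta^{S}_{i,\bullet}(I)$; this is exactly \eqref{Betti num}. (2) Observe $\wP_\bullet \otimes_{\wS} \wS/(\Theta)$ is a complex of free $S$-modules with $H_0 = S/I$ (using $\wS/(\wI+(\Theta))\cong S/I$, which holds at the level of rings regardless of regularity, again by construction of $\BoX$). (3) The total Betti numbers of this complex over $S$ are $\ge \beta^{S}_{i}(S/I)$ coordinatewise; but by (1) they equal $\beta^{S}_{i}(S/I)$. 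A complex of free modules with the right Euler characteristic behavior and matching ranks whose $H_0$ is $S/I$ must then be acyclic — equivalently, the only way equality of ranks can coexist with $H_0=S/I$ is that there is no homology in higher degrees. (4) Acyclicity of $\wP_\bullet \otimes_{\wS} \wS/(\Theta)$ is precisely the statement that $\Theta$ is a regular sequence on $\wS/\wI$ (one checks inductively, quotienting by one linear form at a time and using the local criterion for regularity via $\mathrm{Tor}$), hence $\wI$ is a polarization of $I$.

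The main obstacle is making step (3)/(4) rigorous: from ``a free complex over $S$ with $H_0 = S/I$ whose term ranks equal the Betti numbers of $S/I$'' one wants to conclude exactness. The clean way is to run the argument by induction on the number of linear forms in $\Theta$, using at each stage the short exact sequence $0 \to (\wS'/\wI')(-1) \xrightarrow{\ell} \wS'/\wI' \to \wS'/(\wI',\ell) \to 0$ and the associated long exact sequence in $\mathrm{Tor}$: multiplication by $\ell$ is injective on $\wS'/\wI'$ iff the connecting maps vanish iff the Betti numbers do not jump, and the hypothesis \eqref{Betti num} (applied after each depolarization, noting each intermediate quotient is again of the form $\BoX$ of a Borel fixed ideal, or at least that its Betti numbers are pinned between those of $\wS/\wI$ and $S/I$) forces exactly this. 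Alternatively, one can phrase it via the standard fact that a graded complex of free modules $F_\bullet$ with $\mathrm{rank}\, F_i = \beta_i(H_0(F_\bullet))$ and $F_\bullet$ minimal-after-specialization is forced to be a resolution; since $\wP_\bullet\otimes \wS/(\Theta)$ need not be minimal, I would instead split off a minimal part and compare ranks, the non-minimal ``trivial'' summands then being forced to be zero by the rank equality. Either route is short once \eqref{Betti num} and Theorem~\ref{main} are in hand, which is the point the authors are making when they write ``this equation induces the theorem.''
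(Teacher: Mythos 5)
Your starting point is exactly the paper's: deduce the corollary from the equality \eqref{Betti num}, which in turn comes from Theorem~\ref{main} and the bijection between admissible pairs for $\wI$ and for $I$. But the paper does not re-prove the implication ``$\beta_{i,j}^{\wS}(\wI)=\beta_{i,j}^{S}(I)$ for all $i,j$ implies that $\Theta$ is an $\wS/\wI$-regular sequence''; it simply invokes \cite[Lemma~6.9]{NR} (see also \cite[Lemma~2.2]{y10}), which is precisely that statement. Your steps (1)--(2) are fine and agree with the paper; the gap lies in steps (3)--(4), where you attempt to prove the cited lemma yourself by a rank count.

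The formal principle you rely on --- a minimal complex of free $S$-modules with $H_0=S/I$ whose ranks equal $\beta_i^S(S/I)$ must be acyclic --- is false. Over $S=\kk[x,y]$ take $G_\bullet:\ 0\to S\to S^2\to S\to 0$ with $d_2(e)=-xy\,f_1+x^2f_2$ and $d_1(f_1)=x$, $d_1(f_2)=y$: it is minimal, $H_0\cong\kk$, the ranks $1,2,1$ are exactly the Betti numbers of $\kk$, yet $H_1\cong S/(x)\ne 0$. (Note also that $\wP_\bullet\otimes_{\wS}\wS/(\Theta)$ is automatically minimal, being the reduction of a minimal complex, so there are no trivial summands to split off; minimality is not the issue, exactness is. Moreover the coordinatewise inequality ``ranks $\ge\beta_i^S(S/I)$'' you assert is itself not a formal fact for $i\ge 2$.) Your fallback induction is circular as written: the sequence $0\to(\wS'/\wI')(-1)\stackrel{\ell}{\to}\wS'/\wI'\to\wS'/(\wI'+(\ell))\to 0$ is exact only when $\ell$ is a nonzerodivisor, which is what has to be proved; what one actually has is a four-term sequence with kernel $(0:_{\wS'/\wI'}\ell)$, and to kill it you would need the graded Betti numbers of the intermediate specializations to be squeezed between $\beta^{\wS}(\wI)$ and $\beta^{S}(I)$ --- an assertion you do not establish, and the intermediate quotients are not of the form $\BoX$ of a Borel fixed ideal, so \eqref{Betti num} does not apply to them. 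That semicontinuity statement for variable-identification specializations is exactly the content of \cite[Lemma~6.9]{NR}: either cite it, as the paper does, or supply its proof; the rank comparison alone does not yield acyclicity.
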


\begin{proof}
Follows from the equation~\eqref{Betti num} and \cite[Lemma~6.9]{NR} (see also \cite[Lemma 2.2]{y10}). 
\end{proof}

The next result also follows from \cite[Lemma~6.9]{NR}. 

\begin{cor}
$\wP_\bullet \otimes_{\wS} \wS/(\Theta )$ is a minimal $S$-free resolution of $S/I$. 
\end{cor}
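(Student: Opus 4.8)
The plan is to deduce this from Theorem~\ref{main} together with the fact, established in Corollary~\ref{new proof}, that $\Theta$ is an $(\wS/\wI)$-regular sequence. Concretely, the argument runs as follows. First I would recall that $\wP_\bullet$ is a $\ZZ^{n\times d}$-graded minimal $\wS$-free resolution of $\wS/\wI$ by Theorem~\ref{main}, and that by Corollary~\ref{new proof} the sequence $\Theta=\{x_{i,1}-x_{i,j} \mid 1\le i\le n,\ 2\le j\le d\}$ is regular on $\wS/\wI$, with $\wS/(\wI+(\Theta))\cong S/I$. The standard homological fact I would invoke is that if $F_\bullet$ is an $R$-free resolution of an $R$-module $M$ and $\underline{\theta}$ is an $R$-sequence that is also $M$-regular, then $F_\bullet\otimes_R R/(\underline{\theta})$ is an $R/(\underline{\theta})$-free resolution of $M/\underline{\theta}M$. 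Since $\Theta$ is visibly a regular sequence on $\wS$ itself (the $x_{i,1}-x_{i,j}$ form part of a system of coordinates), and is $(\wS/\wI)$-regular by Corollary~\ref{new proof}, this applies with $R=\wS$, $M=\wS/\wI$, $\underline{\theta}=\Theta$, giving that $\wP_\bullet\otimes_{\wS}\wS/(\Theta)$ is an $S$-free resolution of $\wS/(\wI+(\Theta))\cong S/I$.

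The remaining point is minimality. Here I would observe that the differentials of $\wP_\bullet$ have all their entries in the maximal graded ideal $(x_{i,j})$ of $\wS$ — indeed, every coefficient appearing in $\partial$, namely the monomials $x_{i_r,j_r}$ and $(x_{i_r,j_r}\wm)/\wm_{\<i_r\>}$ (the latter a genuine monomial of positive degree by Lemma~\ref{rem for adm}(ii) and the discussion following it), lies in $(x_{i,j})$. Under the quotient map $\wS\to\wS/(\Theta)\cong S$ the ideal $(x_{i,j})$ maps into the maximal graded ideal $(x_1,\dots,x_n)$ of $S$, so the differentials of $\wP_\bullet\otimes_{\wS}\wS/(\Theta)$ still have entries in $(x_1,\dots,x_n)$; hence the resolution is minimal. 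Alternatively, and more slickly, minimality is immediate from the Betti-number equality \eqref{Betti num}: the ranks of the free modules in $\wP_\bullet\otimes_{\wS}\wS/(\Theta)$ agree with the Betti numbers $\beta_{i,j}^S(I)$, so no free summand can split off.

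I expect the main (and essentially only) obstacle to be purely expository: making precise the base-change statement ``regular sequence $\Rightarrow$ tensoring preserves resolutions.'' This is exactly the content cited as \cite[Lemma~6.9]{NR} (and \cite[Lemma~2.2]{y10}), so I would simply quote it rather than reprove it. In detail: since $\Theta$ is $\wS/\wI$-regular, $\opn{Tor}^{\wS}_i(\wS/\wI,\wS/(\Theta))=0$ for $i>0$, which by computing Tor from the flat-free resolution side forces $\wP_\bullet\otimes_{\wS}\wS/(\Theta)$ to be acyclic with $H_0=\wS/(\wI+(\Theta))\cong S/I$. Combining acyclicity with the minimality observation above completes the proof. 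No delicate combinatorics of admissible pairs is needed at this stage; all the real work has already been done in Theorem~\ref{main} and Corollary~\ref{new proof}.
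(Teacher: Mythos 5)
Your argument is correct, and it is essentially the standard descent argument the paper has in mind; the only difference is where the weight is placed. The paper disposes of this corollary in one line by citing \cite[Lemma~6.9]{NR} again: that lemma takes the Betti-number equality \eqref{Betti num} as its input and delivers simultaneously the polarization statement of Corollary~\ref{new proof} and the fact that the specialized complex is a minimal free resolution. You instead take Corollary~\ref{new proof} (regularity of $\Theta$ on $\wS/\wI$) as the input and run the standard base-change argument: $\Theta$ is a regular sequence on $\wS$, so the Koszul complex computes $\opn{Tor}^{\wS}_\bullet(\wS/\wI,\wS/(\Theta))$, which vanishes in positive degrees by regularity on $\wS/\wI$, whence $\wP_\bullet\otimes_{\wS}\wS/(\Theta)$ is acyclic with $H_0\cong S/I$; minimality follows because the entries of $\partial$ are monomials of positive degree, hence land in the graded maximal ideal of $S$ (your alternative Betti-count argument also works). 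This is exactly the mechanism the paper itself invokes later, via \cite[Proposition~1.1.5]{BH}, for the analogous statement about $\Theta_a$, so your route is slightly more self-contained while the paper's is more economical; there is no gap and no circularity, since Corollary~\ref{new proof} precedes this statement.
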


\begin{rem}\label{rem:diff res}
(1) The correspondence between the admissible pairs for $I$ and those for $\wI$, does {\it not} give
a chain map between the Eliahou-Kervaire resolution and our $\wP_\bullet \otimes_{\wS} \wS/(\Theta )$.
Let us give an example to verify this. In the following argument, refer to \cite{EK}
for the definition of  differential maps due to Eliahou-Kervaire.
Let $I := (x_1^2,x_1x_2, x_1x_3, x_1x_4, x_2^2, x_2x_4) \subset \kk[x_1,x_2,x_3,x_4]$. Then $(\{ 1,2,3 \}, x_2x_4)$ is an admissible pair
in the sense of Eliahou-Kervaire, and $(\{ (1,1), (2,2), (3,2) \}, x_{2,1}x_{4,2})$ is the corresponding admissible one in our sense.
In the Eliahou-Kervaire resolution, the image of the basis $e(\{ 1, 2, 3\}, x_2x_4)$ corresponding to $(\{ 1,2,3 \}, x_2x_4)$,
by the differential map, is a linear combination (with nonzero coefficients in $S$) of
the following four bases
$$
e(\{ 2, 3 \}, x_2x_4), \ e(\{ 1, 3\} ,x_2x_4),\ e(\{ 1, 2 \}, x_2x_4), \ e(\{ 1, 2 \}, x_2x_3).
$$
On the other hand, the image of $e(\{ (1,1), (2,2), (3,2)\} ,x_{2,1}x_{4,2})$ in our resolution is
a linear combination of these five bases
\begin{align*}
&e(\{ (2,2), (3,2)\}, x_{2,1}x_{4,2}),\ e(\{ (1,1), (3,2)\} ,x_{2,1}x_{4,2}), \ e(\{ (1,1), (2,2)\} ,x_{2,1}x_{4,2}) \\
&e(\{ (1,1), (2,2) \} ,x_{2,1}x_{3,2}), \ e(\{ (2,2), (3,2)\} ,x_{1,1}x_{4,2}).
\end{align*}
Thus the correspondence between admissible pairs for $I$ and $\wI$ does not give a chain map.

In this example, the image of $e(\{ 1, 2, 3\}, x_2x_4)$ is a linear combination of four base elements, 
while that of  $e(\{ (1,1), (2,2), (3,2)\} ,x_{2,1}x_{4,2})$  needs five base elements. 
It is also easy to find an ``opposite" example, that is, the image of  $e(\wF, \wm)$ in our resolution sometimes requires fewer bases elements 
than that of $e(F,\m)$ in the Eliahou-Kervaire resolution.

In these senses, our resolution differs from the one due to Eliahou-Kervaire resolution in general.
In Example~\ref{final remark} below,
a difference between two resolutions can be recognized visually.

(2) Eliahou and Kervaire (\cite{EK}) constructed minimal free resolutions of {\it stable monomial ideals}, 
which form a wider class than Borel fixed ideals. As shown in \cite[Example~2.3 (2)]{y10}, 
$\BoX(J)$ is not a  polarization for  a stable monomial ideal $J$ in general, and 
the construction of $\wP_\bullet \otimes_{\wS} \wS/(\Theta )$ does not work for $J$.

(3) By the equation~\eqref{Betti num} and Corollary~\ref{new proof}, one might expect that
$\BoX(-)$ preserves {\it lcm-lattices}.  But it does not in general.
Recall that the lcm-lattice of a monomial ideal $J$ is the set 
$\LCM (J) := \{ \, \lcm \{ \, \m  \mid \m \in \gs \, \} 
\mid \gs \subset G(J) \, \}$ with the order given by divisibility. Clearly,  $\LCM(J)$ forms a lattice.
Let $\vee$ denote the join in the lcm-lattice.
For the Borel fixed ideal $I = (x^2, xy, xz, y^2, yz)$, 
we have $xy \vee xz = xy \vee yz = xz \vee yz$ in $\LCM(I)$. On the other hand,  
$\wt{xy} \vee \wt{xz} = x_1y_2z_2$, $\wt{xy} \vee \wt{yz} = x_1y_1y_2z_2$ and $\wt{xz} \vee \wt{yz} = x_1y_1z_2$ 
are all distinct in $\LCM (\wI )$ .
\end{rem}

Let $a = \{a_0,a_1,a_2,\dots\}$ be a non-decreasing sequence of non-negative integers with $a_0 = 0$, and $T = \kk[x_1,\dots ,x_N]$ a polynomial ring with $N \gg 0$.
In his paper \cite{Mu}, Murai defined an operator $(-)^{\gamma(a)}$
acting on monomials and monomial ideals of $S$.
For a monomial $\m \in S$ with the expression $\m = \prod_{i=1}^e x_{\alpha_i}$
as \eqref{alpha expression}, set
$$
\m^{\gamma(a)} := \prod_{i=1}^e x_{\ga_i + a_{i-1}} \in T,
$$
and for a monomial ideal $I \subset S$,
$$
I^{\gamma(a)} := (\m^{\gamma(a)} \mid \m \in G(I) ) \subset T.
$$
If $a_{i+1} > a_i$ for all $i$, then $I^{\gamma(a)}$ is a squarefree monomial
ideal. Particularly, in the case $a_i = i$ for all $i$, the operator
$(-)^{\gamma(a)}$ is just the squarefree operator $(-)^\sq$
due to Kalai,
which plays an important role in the construction of the {\em symmetric shifting} of a simplicial complex (see \cite{AHH2}).

The operator $(-)^{\gamma(a)}$ also can be described by $\bpol(-)$
as is shown by the second author (\cite{y10}).
Let $L_a$ be the $\kk$-subspace of $\wS$ spanned by $\{x_{i,j} - x_{i',j'} \mid i + a_{j-1} = i' + a_{j'-1} \}$, and 
$\Theta_a$ a basis of $L_a$. For example, we can take
$$
\{\ x_{i,j} - x_{i+1,j-1} \mid 1 \le i < n,\ 1 < j \le d\ \}
$$
as $\Theta_a$ in the case $a_i = i$ for all $i$.
With a suitable choice of the number $N$, the ring homomorphism $\wS \to T$ with $x_{i,j} \mapsto x_{i+a_{j-1}}$ 
induces the isomorphism $\wS/(\Theta_a) \cong T$.

\begin{prop}[{\cite[Proposition 4,1]{y10}}]\label{sqf_operator}
With the above notation,  $\Theta_a$ forms an $\wS / \wI$-regular sequence, and we have
$(\wS /(\Theta_a)) \otimes_{\wS} (\wS /\wI) \cong T/I^{\gamma (a)}$
through the isomorphism $\wS/(\Theta_a) \cong T$.
\end{prop}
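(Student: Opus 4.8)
The plan is to show that $\wt P_\bullet\otimes_{\wS}(\wS/(\Theta_a))$ is a minimal $T$-free resolution of $T/I^{\gamma(a)}$. Write $\varphi\colon\wS\to T$ for the surjection $x_{i,j}\mapsto x_{i+a_{j-1}}$; then $\ker\varphi=(\Theta_a)$ (as $\Theta_a$ is a basis of $L_a$) and $\wS/(\Theta_a)\cong T$. From $\BoX(\m)=\prod_i x_{\alpha_i,i}$ we get $\varphi(\BoX(\m))=\m^{\gamma(a)}$, hence $\varphi(\wI)T=I^{\gamma(a)}$; in particular $(\wS/(\Theta_a))\otimes_{\wS}(\wS/\wI)=\wS/(\wI+(\Theta_a))\cong T/I^{\gamma(a)}$, which is the second assertion, so it remains to see that $\Theta_a$ is $(\wS/\wI)$-regular. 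Since $\Theta_a$ is a sequence of linearly independent linear forms it is $\wS$-regular, whence $\Theta_a$ is $(\wS/\wI)$-regular if and only if $\opn{Tor}^{\wS}_i(\wS/\wI,\wS/(\Theta_a))=H_i\bigl(\wt P_\bullet\otimes_{\wS}(\wS/(\Theta_a))\bigr)=0$ for all $i>0$ (using Theorem~\ref{main}). Finally, every entry of the differential of $\wt P_\bullet$ is, up to sign, a monomial of positive degree, so this persists under $\varphi$; thus $\wt P_\bullet\otimes_{\wS}T$ is a \emph{minimal} complex of free $T$-modules with $H_0\cong T/I^{\gamma(a)}$, and the whole proposition reduces to its acyclicity in positive degrees.

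For acyclicity I would re-run the proof of Theorem~\ref{main}, inducting on $t=\#G(I)$ (the case $t=1$ trivial) and applying $-\otimes_{\wS}T$ throughout. Keep that proof's notation: $\wt P_\bullet=\Con_\bullet(-\gd')$ for a map $-\gd'\colon\wt K_\bullet\to\wt Q_\bullet$ with $\wt K_\bullet=K_\bullet(x_{1,b_1},\dots,x_{\gl,b_\gl})(-\wt\ba)$, $\gl=\nu(\m_t)-1$, $b_i$ as in Lemma~\ref{sec:bpolfilt}, and $\wt Q_\bullet$ the complex built as $\wt P_\bullet$ but for $I_{t-1}$. Base change commutes with mapping cones, so $\wt P_\bullet\otimes_{\wS}T=\Con_\bullet(-\gd'\otimes T)$, and tensoring the short exact sequence of complexes from that proof with $T$ again gives
$$0\longto\wt Q_\bullet\otimes_{\wS}T\longto\wt P_\bullet\otimes_{\wS}T\longto\bigl(\wt K_\bullet\otimes_{\wS}T\bigr)[1]\longto0,$$
with $\wt Q_\bullet\otimes_{\wS}T$ resolving $T/I_{t-1}^{\gamma(a)}$ by induction. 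Two things must be checked: \textbf{(a)} the images $\varphi(x_{i,b_i})=x_{i+a_{b_i-1}}$, $1\le i\le\gl$, are \emph{distinct} variables of $T$ --- for $i<i'$ one has $b_i\le b_{i'}$, hence $a_{b_i-1}\le a_{b_{i'}-1}$ ($a$ non-decreasing), hence $i+a_{b_i-1}<i'+a_{b_{i'}-1}$ --- so $\wt K_\bullet\otimes_{\wS}T$ is a Koszul complex on a regular sequence of variables of $T$, resolving $T/(\varphi(x_{1,b_1}),\dots,\varphi(x_{\gl,b_\gl}))$; and \textbf{(b)} $(I_{t-1}^{\gamma(a)}:\m_t^{\gamma(a)})\subseteq(\varphi(x_{1,b_1}),\dots,\varphi(x_{\gl,b_\gl}))$ in $T$. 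Granting these, the long exact homology sequence gives $H_i(\wt P_\bullet\otimes_{\wS}T)=0$ for $i\ge2$, and $H_1$ is the kernel of the multiplication-by-$\m_t^{\gamma(a)}$ map $T/(\varphi(x_{i,b_i}))_i\to T/I_{t-1}^{\gamma(a)}$, which is $(I_{t-1}^{\gamma(a)}:\m_t^{\gamma(a)})/(\varphi(x_{i,b_i}))_i=0$ by (b). This closes the induction.

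The main obstacle is (b). The inclusion $\supseteq$ (hence equality) is immediate: applying $\varphi$ to $(\wm_t)_{\<i\>}\mid x_{i,b_i}\wm_t$ and $(\wm_t)_{\<i\>}\in\wt I_{t-1}$ --- both used in the proof of Lemma~\ref{sec:bpolfilt} --- shows $\varphi(x_{i,b_i})\,\m_t^{\gamma(a)}\in I_{t-1}^{\gamma(a)}$. For $\subseteq$, write $(I_{t-1}^{\gamma(a)}:\m_t^{\gamma(a)})$ as generated by the monomials $\lcm(\m_s^{\gamma(a)},\m_t^{\gamma(a)})/\m_t^{\gamma(a)}$ ($1\le s\le t-1$); since $\m_s\succ\m_t$ furnishes an index $l<\nu(\m_t)$ where the $x$-exponents of $\m_s,\m_t$ first differ, with $\m_s$'s larger, it is enough to prove that $y:=\varphi(x_{l,b_l})$ divides $\lcm(\m_s^{\gamma(a)},\m_t^{\gamma(a)})/\m_t^{\gamma(a)}$. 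Here is the subtlety absent from Lemma~\ref{sec:bpolfilt}: $\varphi$ may identify $x_{l,b_l}$ with a variable of $\wm_t$, so ``$x_{l,b_l}\nmid\wm_t$'' cannot be transported directly; instead one compares the exponent of $y$ in $\m_s^{\gamma(a)}$ with that in $\m_t^{\gamma(a)}$, each being the number of positions $k$ with $\alpha_k+a_{k-1}$ equal to the index of $y$. Monotonicity of the $\alpha$-sequences and of $a$ confines every such $k$ to the block of positions carrying $x_l$, and the strictly larger $x_l$-exponent of $\m_s$ makes this count for $\m_s^{\gamma(a)}$ exceed that for $\m_t^{\gamma(a)}$ by at least one; hence $y$ divides the quotient. (Alternatively, one could shortcut this via \cite[Lemma~6.9]{NR}, as in the proof of Corollary~\ref{new proof}, once $\beta^{T}_i(T/I^{\gamma(a)})=\beta^{\wS}_i(\wS/\wI)$ is known, e.g.\ from \cite{Mu}; but the route above is self-contained given $\wt P_\bullet$.)
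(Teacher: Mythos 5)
Your proof is correct, but it is not the paper's: the paper gives no proof of Proposition~\ref{sqf_operator}, simply quoting it from \cite[Proposition~4.1]{y10}. Your route is genuinely different and self-contained within this paper's machinery: you deduce the statement from Theorem~\ref{main} by base change along $\varphi\colon\wS\to T$, re-running the mapping-cone induction and isolating exactly the two points that need re-checking after specialization --- that the variables $\varphi(x_{i,b_i})=x_{i+a_{b_i-1}}$, $1\le i\le\gl$, remain pairwise distinct (your monotonicity argument is right), and that $(I_{t-1}^{\gamma(a)}:\m_t^{\gamma(a)})$ lies in the ideal they generate, the analogue of Lemma~\ref{sec:bpolfilt}, whose proof indeed does not transport verbatim since $\varphi$ may identify $x_{l,b_l}$ with a variable dividing $\wm_t$; your exponent count (only positions in the $x_l$-block can contribute to the exponent of $x_{l+a_{b_l-1}}$, and the larger $x_l$-exponent of $\m_s$ adds at least the position $b_l$) is sound. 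In spirit this parallels the paper's Corollary~\ref{new proof}, which reproves \cite[Theorem~3.4]{y10} from \eqref{Betti num} and \cite[Lemma~6.9]{NR}; for $\Theta_a$ no Betti equality is available a priori, so your direct acyclicity argument is the honest way to make the proposition independent of \cite{y10}. The citation buys the paper brevity; your argument buys a proof of \cite[Proposition~4.1]{y10} as a byproduct of $\wP_\bullet$, with minimality of $\wP_\bullet\otimes_{\wS}\wS/(\Theta_a)$ and $\beta_{i,j}^{\wS}(\wI)=\beta_{i,j}^{T}(I^{\gamma(a)})$ falling out simultaneously.
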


Applying Proposition~\ref{sqf_operator} and \cite[Proposition 1.1.5]{BH}, we have the following.

\begin{cor}
The complex $\wP_\bullet \otimes_{\wS} \wS/(\Theta_a)$
is a minimal $T$-free resolution of $T/I^{\gamma(a)}$. 
In particular, a minimal free resolution of $T/I^\sq$ is given in this way. 
\end{cor}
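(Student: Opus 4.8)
The plan is to deduce this corollary formally from Theorem~\ref{main} and Proposition~\ref{sqf_operator}, which is exactly what ``applying Proposition~\ref{sqf_operator} and \cite[Proposition~1.1.5]{BH}'' amounts to, and is entirely parallel to the earlier corollaries treating $\Theta$ and $S/I$. Theorem~\ref{main} supplies the minimal $\ZZ^{n\times d}$-graded $\wS$-free resolution $\wP_\bullet$ of $\wS/\wI$, and Proposition~\ref{sqf_operator} tells us that $\Theta_a$ is an $\wS/\wI$-regular sequence with $(\wS/(\Theta_a))\otimes_{\wS}(\wS/\wI)\cong T/I^{\gamma(a)}$ under the isomorphism $\wS/(\Theta_a)\cong T$. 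One preliminary observation is needed: $\Theta_a$ is moreover $\wS$-regular. Indeed $\wS$ is a domain and the elements of $\Theta_a$ are $\kk$-linearly independent linear forms, so after a linear change of coordinates they form a subset of the variables of $\wS$; in particular they are an $\wS$-regular sequence, and the Koszul complex $K_\bullet(\Theta_a;\wS)$ is an $\wS$-free resolution of $\wS/(\Theta_a)$.

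With this in hand I would run the standard base-change argument via $\operatorname{Tor}$. Since $K_\bullet(\Theta_a;\wS)$ resolves $\wS/(\Theta_a)$, the modules $\operatorname{Tor}^{\wS}_i(\wS/\wI,\,\wS/(\Theta_a))$ are the homology of $K_\bullet(\Theta_a;\wS)\otimes_{\wS}\wS/\wI$; as $\Theta_a$ is $\wS/\wI$-regular, this Koszul homology vanishes in positive degrees (depth-sensitivity of the Koszul complex, \cite[Proposition~1.1.5]{BH}), so $\operatorname{Tor}^{\wS}_i(\wS/\wI,\,\wS/(\Theta_a))=0$ for all $i>0$. On the other hand $\wP_\bullet\otimes_{\wS}\wS/(\Theta_a)$ is a complex of free $\wS/(\Theta_a)\cong T$-modules whose homology is $\operatorname{Tor}^{\wS}_\bullet(\wS/\wI,\,\wS/(\Theta_a))$; hence it is acyclic in positive homological degree, and by right exactness of tensor its $0$-th homology is $(\wS/\wI)\otimes_{\wS}\wS/(\Theta_a)\cong T/I^{\gamma(a)}$ (Proposition~\ref{sqf_operator}). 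Thus $\wP_\bullet\otimes_{\wS}\wS/(\Theta_a)$ is a $T$-free resolution of $T/I^{\gamma(a)}$.

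For minimality, recall that each differential of $\wP_\bullet$ has entries that are monomials of positive degree in the variables $x_{i,j}$ (each such entry is $x_{i_r,j_r}$ or $x_{i_r,j_r}\,\wm/\wm_{\<i_r\>}$, as in \S2). Under the specialization $\wS\to T$, $x_{i,j}\mapsto x_{i+a_{j-1}}$, defining $\wS/(\Theta_a)\cong T$, every such monomial maps to a monomial of the same positive degree, hence into the graded maximal ideal of $T$; so $\wP_\bullet\otimes_{\wS}\wS/(\Theta_a)$ has all differential entries in the graded maximal ideal of $T$ and is minimal. Taking $a_i=i$ for all $i$ gives $(-)^{\gamma(a)}=(-)^\sq$ (and $\Theta_a$ may be chosen to be $\Theta'$), which yields the stated minimal $T$-free resolution of $T/I^\sq$. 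No step here is genuinely hard, since Theorem~\ref{main} and Proposition~\ref{sqf_operator} carry the whole load; the only points needing small care are the two bookkeeping claims above, namely that $\Theta_a$ is regular on $\wS$ itself and not merely on $\wS/\wI$ (so that the $\operatorname{Tor}$ identification via $K_\bullet(\Theta_a;\wS)$ is legitimate), and that the coordinate specialization carries the positive-degree monomial entries of $\partial$ into the maximal ideal of $T$, so that minimality is preserved.
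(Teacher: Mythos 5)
Your proposal is correct and follows essentially the same route as the paper: the paper simply invokes Proposition~\ref{sqf_operator} together with \cite[Proposition~1.1.5]{BH}, and your Tor/Koszul argument (plus the observation that the monomial entries of $\partial$ specialize into the maximal ideal of $T$) is just an unpacking of that cited fact, with the minimality check made explicit.
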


To draw a ``diagram" of an admissible pair $(\wF,\wm)$, we put a white square in the  $(i,j)$-th position if $(i,j) \in \wF$ and a black square there 
if $x_{i,j}$ divides $\wm$. If $\wF$ is maximal among $\wF'$ such that $(\wF',\wm)$ is admissible,  
then the diagram of $(\wF,\wm)$ forms a ``right side down stairs". 
For a non-maximal $\wF$, some white squares are removed from the diagram for the maximal case.    
Figure~1 (resp. Figure~2) below is a diagram of $(\wF,\wm)$  with $\wm=x_{1,1}x_{1,2}x_{2,3}x_{6,4}x_{6,5}$ and 
$\wF=\{ \, (1,3), (2,4), (3,4), (4,4),(5,4) \, \}$ (resp. $\wF=\{ \, (1,3), (3,4), (4,4) \, \}$). 
Clearly, the former is a maximal case. 

\begin{figure}[h]
\setlength\unitlength{.4mm}
\begin{minipage}{.4\textwidth}
\begin{center}
\begin{picture}(80,80)(0,0)
\thicklines
\put(20,50){\shade\path(0,0)(0,10)(10,10)(10,0)(0,0)}
\put(30,50){\shade\path(0,0)(0,10)(10,10)(10,0)(0,0)}
\put(40,50){\whiten\path(0,0)(0,10)(10,10)(10,0)(0,0)}
\put(40,40){\shade\path(0,0)(0,10)(10,10)(10,0)(0,0)}
\put(50,40){\whiten\path(0,0)(0,10)(10,10)(10,0)(0,0)}
\put(50,30){\whiten\path(0,0)(0,10)(10,10)(10,0)(0,0)}
\put(50,20){\whiten\path(0,0)(0,10)(10,10)(10,0)(0,0)}
\put(50,10){\whiten\path(0,0)(0,10)(10,10)(10,0)(0,0)}
\put(50,0){\shade\path(0,0)(0,10)(10,10)(10,0)(0,0)}
\put(60,0){\shade\path(0,0)(0,10)(10,10)(10,0)(0,0)}
\put(0,0){\makebox(5,60){$i$}}
\put(18,77){\makebox(60,5){$j$}}
\put(10,3){$6$}
\put(10,13){$5$}
\put(10,23){$4$}
\put(10,33){$3$}
\put(10,43){$2$}
\put(10,53){$1$}
\put(23,66){$1$}
\put(33,66){$2$}
\put(43,66){$3$}
\put(53,66){$4$}
\put(63,66){$5$}
\end{picture}
\end{center}
\caption{}
\end{minipage}
\begin{minipage}{.4\textwidth}
\begin{center}
\begin{picture}(80,80)(0,0)
\thicklines
\put(20,50){\shade\path(0,0)(0,10)(10,10)(10,0)(0,0)}
\put(30,50){\shade\path(0,0)(0,10)(10,10)(10,0)(0,0)}
\put(40,50){\whiten\path(0,0)(0,10)(10,10)(10,0)(0,0)}
\put(40,40){\shade\path(0,0)(0,10)(10,10)(10,0)(0,0)}
\put(50,30){\whiten\path(0,0)(0,10)(10,10)(10,0)(0,0)}
\put(50,20){\whiten\path(0,0)(0,10)(10,10)(10,0)(0,0)}
\put(50,0){\shade\path(0,0)(0,10)(10,10)(10,0)(0,0)}
\put(60,0){\shade\path(0,0)(0,10)(10,10)(10,0)(0,0)}
\put(0,0){\makebox(5,60){$i$}}
\put(18,77){\makebox(60,5){$j$}}
\put(10,3){$6$}
\put(10,13){$5$}
\put(10,23){$4$}
\put(10,33){$3$}
\put(10,43){$2$}
\put(10,53){$1$}
\put(23,66){$1$}
\put(33,66){$2$}
\put(43,66){$3$}
\put(53,66){$4$}
\put(63,66){$5$}
\end{picture}
\end{center}
\caption{}
\end{minipage}
\end{figure}

The lower right end of the stairs must be a black square.  
For each $j$ with $1 \le j \le \deg \wm$, there is a unique 
black square in the $j$-th column. The black square in the $j$-th column is the ``lowest" of the squares in this column.

\begin{lem}\label{one generated}
Assume that $I$ is generated in one degree (i.e., all element of $G(I)$ have the same degree).
Let $(\wF, \wm)$ with $\wF=\{ \, (i_1,j_1), \ldots, (i_q, j_q) \, \}$  be an admissible pair for $\BoX(I)$. 
Then, $r \in B(\wF, \wm)$ if and only if $j_r < j_{r+1}$ or $r=q$.  
\end{lem}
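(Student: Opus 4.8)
The plan is to reduce everything to the criterion of Lemma~\ref{lem for adm}~(iii), whose only hypothesis beyond $(\wF,\wm)$ being admissible is that $(\wF_r,\wm_{\<i_r\>})$ satisfies condition (a) of Definition~\ref{admissible2}. Indeed, if that hypothesis is automatic when $I$ is generated in one degree, then Lemma~\ref{lem for adm}~(iii) immediately gives the stated equivalence. So the real content to prove is: \emph{when $I$ is generated in one degree, $(\wF_r, \wm_{\<i_r\>})$ always satisfies condition (a)}, i.e.\ $i_q < \nu(\m_{\<i_r\>})$ for every $r$ with $1 \le r \le q$.

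First I would dispose of the case $r = q$ using Lemma~\ref{lem for adm}~(ii), which already tells us $q \in B(\wF,\wm)$ unconditionally; this matches the ``$r = q$'' clause in the statement. So assume $r < q$. The key observation is a degree count: since $I$ is generated in one degree, all elements of $G(I)$ have the same degree $e$, and in particular $\deg \m_{\<i_r\>} = \deg g(\fb_{i_r}(\m)) = \deg \fb_{i_r}(\m) = \deg \m = e$, because $\fb_{i_r}$ preserves degree and $g(-)$ is a generator of $I$. Now by Lemma~\ref{m_i_r}, $\m_{\<i_r\>}$ and $\fb_{i_r}(\m) = (x_{i_r}/x_k)\cdot\m$ agree in all variables $x_l$ with $l \le i_r$, and by admissibility of $(\wF,\wm)$ we have $i_q < \nu(\m)$, so $\m$ (hence $\fb_{i_r}(\m)$) has positive exponent in some variable $x_l$ with $l > i_q \ge i_r$. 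I want to conclude $\nu(\m_{\<i_r\>}) > i_q$ as well.

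The mechanism is this: $\m_{\<i_r\>}$ carries exactly the exponents of $\fb_{i_r}(\m)$ in the range $l \le i_r$, and must make up the remaining degree (which is $e - \sum_{l \le i_r}(\text{exp of }x_l\text{ in }\fb_{i_r}(\m))$, a strictly positive quantity since $\fb_{i_r}(\m)$ has support beyond $i_q$) using variables $x_l$ with $l > i_r$. Because $\m_{\<i_r\>} = g(\fb_{i_r}(\m))$ is a minimal generator and $\fb_{i_r}(\m) \in I$, the expression $\fb_{i_r}(\m) = \m_{\<i_r\>}\cdot\m_2$ with $\nu(\m_{\<i_r\>}) \le \mu(\m_2)$ forces $\nu(\m_{\<i_r\>}) \ge \nu(\fb_{i_r}(\m)) > i_q$ unless $\m_2$ absorbs all the top variables; but $\m_2$ is supported in degrees $\ge \nu(\m_{\<i_r\>})$, so if $\nu(\m_{\<i_r\>}) \le i_q$ then $\m_{\<i_r\>}$ divides $\m$ (it agrees with $\m$ below $i_r$ and has no support above $i_q \ge i_r$ that $\m$ lacks), contradicting $\m \in G(I)$ exactly as in the proof of Lemma~\ref{m_i_r}. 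Hence $\nu(\m_{\<i_r\>}) > i_q$, condition (a) holds for $(\wF_r,\wm_{\<i_r\>})$, and Lemma~\ref{lem for adm}~(iii) finishes the proof.

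The main obstacle is the degree-count step: one must be careful that the equal-degree hypothesis is genuinely used, since without it $\nu(\m_{\<i_r\>})$ can drop below $i_q$ (as the second example after Definition~\ref{admissible2} shows, where $\m_{\<2\>} = x_1^2 x_2$ has smaller degree and smaller $\nu$). Pinning down precisely why ``$\fb_{i_r}(\m)$ has support above $i_q$'' plus ``$\deg g(\fb_{i_r}(\m)) = \deg \fb_{i_r}(\m)$'' forces $\nu(g(\fb_{i_r}(\m))) > i_q$ — rather than merely $> i_r$ — is the delicate point, and I expect to spell it out via the contradiction-with-minimality argument sketched above rather than a direct computation.
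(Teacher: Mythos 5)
Your plan over-reaches at the reduction step: you set out to prove that condition (a) of Definition~\ref{admissible2} holds for $(\wF_r,\wm_{\<i_r\>})$ for \emph{every} $r<q$ once $I$ is generated in one degree, but that statement is false, and the hypothesis $j_r<j_{r+1}$ cannot be dropped. Concretely, take $I=(x_1^2,x_1x_2,x_1x_3,x_1x_4)$ (Borel fixed, generated in degree $2$), $\m=x_1x_4$ and $\wF=\{(2,2),(3,2)\}$; this pair is admissible, and for $r=1$ one has $k=\min\{l>2 \mid a_l>0\}=4=\nu(\m)$, so $\fb_2(\m)=x_1x_2=\m_{\<2\>}$ and $\nu(\m_{\<2\>})=2<i_q=3$: condition (a) fails (consistently with the lemma, since $j_1=j_2$). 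The precise step of yours that breaks is ``$\m$ (hence $\fb_{i_r}(\m)$) has positive exponent in some variable $x_l$ with $l>i_q$'': the operation $\fb_{i_r}$ divides out $x_k$ with $k=\min\{l>i_r \mid a_l>0\}$, and when $j_r=j_{r+1}=\dots=j_q$ this $k$ can be $\nu(\m)$ itself, so if $a_{\nu(\m)}=1$ the entire support of $\m$ above $i_q$ is destroyed. The inequality $j_r<j_{r+1}$ is exactly what rules this out: it forces $k\le i_{r+1}\le i_q<\nu(\m)$, and this is where the paper's proof uses it — under $j_r<j_{r+1}$ one gets $k\le i_{r+1}<\nu(\m)$, the one-degree hypothesis gives $\m_{\<i_r\>}=\fb_{i_r}(\m)$, and hence $\nu(\m_{\<i_r\>})=\nu(\m)>i_q$, after which Lemma~\ref{lem for adm}(iii) finishes.

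Your fallback contradiction mechanism is also unsound: from $\nu(\m_{\<i_r\>})\le i_q$ you want to conclude that $\m_{\<i_r\>}$ divides $\m$, but by Lemma~\ref{m_i_r} the exponent of $x_{i_r}$ in $\m_{\<i_r\>}$ equals $a_{i_r}+1$, strictly larger than in $\m$, so $\m_{\<i_r\>}$ never divides $\m$ and no contradiction can be extracted this way (in the example above, $x_1x_2\nmid x_1x_4$, and $\nu(\m_{\<i_r\>})\le i_q$ genuinely occurs). Your sound observations — that $r=q$ is covered by Lemma~\ref{lem for adm}(ii), that the one-degree hypothesis gives $\deg\m_{\<i_r\>}=\deg\fb_{i_r}(\m)$ and hence $\m_{\<i_r\>}=\fb_{i_r}(\m)$, and that everything should be funneled through Lemma~\ref{lem for adm}(iii) — are exactly the paper's ingredients; what is missing is invoking $j_r<j_{r+1}$ to keep $\nu$ from dropping, and with that restriction in place the verification of (a) is a one-line computation rather than a minimality argument.
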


\begin{proof}
By Lemma~\ref{lem for adm} (iii), 
it suffices to show that $r < q$ and $j_r < j_{r+1}$ imply $i_q < \nu(\m_{\<i_r\>})$. 
Under this assumption, we have $k:= \min\{ \, l > i_r \mid \text{$x_l$ divides $\m$} \, \} \leq i_{r+1} < \nu(\m)$.   
Since $I$ is generated in one degree, we have 
$\m_{\<i_r\>}= \fb_{i_r}(\m)=(x_{i_r}/x_k) \cdot  \m$ and  $\nu(\m_{\<i_r\>})=\nu(\m)>i_q$. 
\end{proof}

In the rest of this section,  we assume that $I$ is generated in one degree unless otherwise specified. 
Let us go back to the diagram of $(\wF,\wm)$.  
By Lemma~\ref{one generated},  $r \in B(\wF,\wm)$ if and only if  the white square in the $(i_r,j_r)$-th position  
is the ``lowest" one among white squares in the $j_r$-th column. In this case, we can get the diagram of the admissible pair 
$(\wF_r, \wm_{\<i_r\>})$ from that of $(\wF, \wm)$ by the following procedure.

\begin{itemize}
\item[(i)] Remove the (sole) black square in the $j_r$-th column. 
\item[(ii)] Replace the white square in  the $(i_r, j_r)$-th position by a black one. 
\end{itemize} 

After minor modification, the above mentioned relation between the diagram of $(\wF,\wm)$ and that of $(\wF_r, \wm_{\<i_r\>})$ 
remains true for a general Borel fixed ideal. In general, we might have $\m_{\<i_r\>} \ne \fb_{i_r}(\m)$. 
If this is the case,  we have to remove (possibly several) black squares from the right end of the diagram of $(\wF,\wm)$. 

\medskip

For an admissible pair  $(\wF, \wm)$ with $\wF=\{ \, (i_1,j_1), \ldots, (i_q, j_q) \, \}$,  
set $$x(\wF, \wm):= \wm \times \prod_{r=1}^q x_{i_r, j_r} \in \wS. $$
Clearly, the ``support" of $x(\wF,\wm)$ coincides with the all squares (i.e., black and  white squares) in the diagram of $(\wF,\wm)$.
Since the coloring of each square is uniquely determined by
its position in the diagram (the squares on the bottom
of each column are black, and the others white), it follows that
\begin{equation}\label{diagram recovers}
x(\wF, \wm) = x(\wF' ,\wm') \iff (\wF, \wm) = (\wF',\wm')
\end{equation}
for two admissible pairs $(\wF, \wm)$ and $(\wF', \wm')$.

Set
$$\rmv(\wF, \wm, j):= \begin{cases}
\{ \, (i,j) \mid \text{$x_{i,j}$ divides $x(\wF, \wm)$} \, \} & \text{if $j=j_r$ for some $r$,}\\
\emptyset & \text{otherwise,}
\end{cases}$$
and  
\begin{equation}\label{rm decomp}
\rmv(\wF,\wm):=\bigcup_{j \in \NN} \rmv(\wF, \wm, j) 
\end{equation}
(here, ``rm" stands for ``removable"). The diagram of $(\wF,\wm)$ introduced above is helpful to understand 
$\rmv(\wF,\wm)$. If there is a white square in the $j$-th column, 
the elements in $\rmv(\wF, \wm, j)$ correspond to the all squares in the $j$-th column.

We can take $r_1, \ldots, r_k$ so that 
$$\rmv(\wF, \wm)=\bigsqcup_{l=1}^k R_l \quad \text{with} \quad R_l := \rmv(\wF, \wm, j_{r_l}).$$ 
For $j:=j_{r_l}$, set $s:= \min \{ \, r \mid j_r=j \, \}$,  $t:= \max \{ \, r \mid j_r=j \, \}$ and 
$u:= \min \{ \, l > i_t \mid \text{$x_l$ divides $\m$}  \}$. Then,  we have  
\begin{equation}\label{eq:R_l decomp}
R_l=\{ \, (i_r,j) \in \wF \mid s \le r \le t \, \} \cup \{ \, (u, j) \, \}.
\end{equation}
In particular, we have $\# R_l \ge 2$, and $(i_r,j) \in \wF$ with $s \le r \le t$ (resp. $(u,j)$) 
corresponds to a white (resp. black) square in the $j$-th column of the diagram of $(\wF,\wm)$. 
For $r$ with $s \le r \le t$,  $r \in B(\wF,\wm)$ if and only if $r=t$ by Lemma~\ref{one generated}. 

\begin{lem}\label{rmv 1}
With the above notation (in particular, $j=j_{r_l}$), for $r$ with $s \le r \le t$, we have 
$$x(\wF_r, \wm)=x(\wF, \wm)/x_{i_r, j}$$
and $$\rmv(\wF_r, \wm)=\begin{cases}
\rmv(\wF, \wm) \setminus \{ x_{i_r, j}\} & \text{$\# R_l > 2$,}\\
\rmv(\wF, \wm) \setminus R_l  & \text{$\# R_l = 2$.}
\end{cases}.$$ 
Similarly, we have 
$$x(\wF_t, \wm_{\<i_t\>})=x(\wF, \wm)/x_{u, j}$$
and 
$$\rmv(\wF_t, \wm_{\<i_t\>})=\begin{cases}
\rmv(\wF, \wm) \setminus \{ x_{u, j}\} & \text{$\# R_l > 2$,}\\
\rmv(\wF, \wm) \setminus R_l  & \text{$\# R_l = 2$.}
\end{cases}.$$ 
\end{lem}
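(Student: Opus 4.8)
The plan is to reduce everything to the identity $x(\wF,\wm)=\wm\cdot\prod_{r=1}^{q}x_{i_r,j_r}$ together with the fact that $x(\wF,\wm)$ is squarefree. The squarefreeness holds because $\wm$ is squarefree, the pairs $(i_r,j_r)$ are distinct (the $i_r$ strictly increase), and no $x_{i_r,j_r}$ divides $\wm$: condition (b) of Definition~\ref{admissible2} gives $\alpha_{j_r}>i_r$, where $j_r\le\deg\wm$ because $i_r<\nu(\m)$. Granting this, for $s\le r\le t$ the pair $(\wF_r,\wm)$ is admissible by Lemma~\ref{lem for adm}(i), so $x(\wF_r,\wm)$ is defined, and $x(\wF_r,\wm)=\wm\prod_{r'\ne r}x_{i_{r'},j_{r'}}=x(\wF,\wm)/x_{i_r,j}$ since $j_r=j$. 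For the second identity I would use the one-degree hypothesis: as in the proof of Lemma~\ref{one generated} it gives $\m_{\<i_t\>}=\fb_{i_t}(\m)$, so the integer $k$ occurring in Lemma~\ref{rem for adm}(ii) equals $u$, and that lemma yields $x_{u,j}\cdot\wm_{\<i_t\>}=x_{i_t,j}\cdot\wm$, whence $\wm_{\<i_t\>}=(x_{i_t,j}/x_{u,j})\cdot\wm$. Since $t\in B(\wF,\wm)$ by Lemma~\ref{one generated}, $(\wF_t,\wm_{\<i_t\>})$ is admissible, and $x(\wF_t,\wm_{\<i_t\>})=\wm_{\<i_t\>}\prod_{r'\ne t}x_{i_{r'},j_{r'}}=x(\wF,\wm)/x_{u,j}$.

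For the assertions about $\rmv$ I would argue column by column, using $\rmv(\wF,\wm)=\bigsqcup_{l'}R_{l'}$ indexed by the distinct values among $j_1,\dots,j_q$. Removing an index with $j$-value $j$ from $\wF$ changes neither which other values occur among the surviving $j_{r'}$ nor, for $j'\ne j$, the $j'$-variables of the relevant monomial ($x(\wF_r,\wm)$ differs from $x(\wF,\wm)$ only in the $j$-variable $x_{i_r,j}$, and $x(\wF_t,\wm_{\<i_t\>})$ only in the $j$-variable $x_{u,j}$); hence $\rmv(\cdot,j')=R_{l'}$ is unchanged for every $l'\ne l$. So only column $j$ needs examination, and the dichotomy is exactly whether column $j$ still carries a white square after the removal. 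If $\#R_l>2$, then $s<t$, at least one index of $\{s,\dots,t\}$ other than the one removed survives, column $j$ stays active, and one reads off $\rmv(\wF_r,\wm,j)=R_l\setminus\{(i_r,j)\}$ and $\rmv(\wF_t,\wm_{\<i_t\>},j)=R_l\setminus\{(u,j)\}$ (here $(u,j)\in R_l$ is the black square of \eqref{eq:R_l decomp}, so $x_{u,j}$ indeed divides $x(\wF,\wm)$). If $\#R_l=2$, then the removed index is $s=t$, column $j$ becomes inactive, so the corresponding $\rmv(\cdot,j)$ is empty and the whole block $R_l$ disappears. Reassembling the blocks yields the four displayed formulas.

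I do not expect a genuine obstacle; the content is bookkeeping. The step deserving the most attention is $x(\wF_t,\wm_{\<i_t\>})=x(\wF,\wm)/x_{u,j}$, where the one-degree hypothesis is essential in order to identify $\m_{\<i_t\>}$ with $\fb_{i_t}(\m)$ and thus apply Lemma~\ref{rem for adm}(ii) with $k=u$; without it, $\m_{\<i_t\>}$ may differ from $\fb_{i_t}(\m)$ and extra black squares at the right end of the diagram would have to be deleted --- precisely the refinement postponed to the general Borel fixed case. A second, minor point is that ``active column'' must be understood relative to the new pair $(\wF_r,\wm)$ or $(\wF_t,\wm_{\<i_t\>})$; this is harmless, since deleting an index whose $j$-value is $j$ cannot change which values $\ne j$ occur among the $j_{r'}$.
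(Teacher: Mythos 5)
Your proposal is correct and follows essentially the paper's own route: the paper's proof is the one-line "easily follows from the above observation," the observation being exactly the diagram/support bookkeeping you spell out (remove the black square $(u,j)$, turn the white square at $(i_t,j)$ black, with $\m_{\<i_t\>}=\fb_{i_t}(\m)$ guaranteed by the one-degree hypothesis as in Lemma~\ref{one generated}). Your column-by-column verification of the $\rmv$ formulas and the use of Lemma~\ref{rem for adm}(ii) to get $x(\wF_t,\wm_{\<i_t\>})=x(\wF,\wm)/x_{u,j}$ are just the details the paper leaves implicit.
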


\begin{proof}
Easily follows from the above observation. 
\end{proof}

Let $A_I$ be the set of all admissible pairs for $I$. We make $A_I$ a poset 
(i.e., partially ordered set) as follows:
$(\wF, \wm)$ covers $(\wF', \wm')$ if and only if $\pm e(\wF',\wm')$ appears in $\partial(e(\wF,\wm))$, 
where $\partial$ is the differential of $\wP_\bullet$.
Extending this, we can define the order $<$ on $A_I$.
We henceforth write $(\wF, \wm) \gtrdot (\wF', \wm')$
to mean $(\wF, \wm)$ covers $(\wF', \wm')$.
By Lemma~\ref{rmv 1}, $(\wF, \wm) \gtrdot (\wF', \wm')$ if and only if $x(\wF', \wm')=x(\wF,\wm)/x_{i,j}$ for 
some $(i,j) \in \rmv(\wF,\wm)$. 
In fact, if $x(\wF', \wm')=x(\wF,\wm)/x_{i,j}$, then $(\wF', \wm')$ is of the form $(\wF_r, \wm)$ or 
$(\wF_r, \wm_{\<i_r\>})$ by \eqref{diagram recovers}. 

\begin{prop}\label{rmv 2}
With the above situation, let $(\wF, \wm)$ and $(\wF', \wm')$ be admissible pairs of $\wI$, and consider the decomposition 
$\rmv(\wF, \wm)=\bigsqcup_{l=1}^k R_l$ as in \eqref{rm decomp}. 
Then $(\wF, \wm) \ge (\wF', \wm')$ 
if and only if there is a subset $R \subset \rmv(\wF, \wm)$ 
such that $R_l \not \subset R$ for all $l$ and 
$$x(\wF', \wm')=x(\wF,\wm)/\prod_{(i,j) \in R}x_{i,j}.$$
\end{prop}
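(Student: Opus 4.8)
The plan is to characterize the order relation $\ge$ on $A_I$ by iterating the covering relation description already available from Lemma~\ref{rmv 1}, keeping careful track of how the decomposition $\rmv(\wF,\wm)=\bigsqcup_l R_l$ evolves along a chain of covers. The key observation is that a single cover step $(\wF,\wm)\gtrdot(\wF',\wm')$ removes exactly one variable $x_{i,j}$ with $(i,j)\in\rmv(\wF,\wm)$, and by Lemma~\ref{rmv 1} this either deletes a single element from one block $R_l$ (when $\#R_l>2$) or, when $\#R_l=2$, deletes the whole block $R_l$ at once; in the latter case the two squares of $R_l$ sit in a column $j$ that has no other squares of $\wF$, so removing either of them removes the column entirely. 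Hence after one step the new removable set $\rmv(\wF',\wm')$ is obtained from $\rmv(\wF,\wm)$ by deleting either a single element from some block, or an entire $2$-element block.

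First I would prove the ``only if'' direction by induction on the length of a saturated chain $(\wF,\wm)=(\wF^{(0)},\wm^{(0)})\gtrdot(\wF^{(1)},\wm^{(1)})\gtrdot\cdots\gtrdot(\wF^{(m)},\wm^{(m)})=(\wF',\wm')$. At each step we record which square $x_{i,j}$ was removed, accumulating a set $R\subset\rmv(\wF,\wm)$ with $x(\wF',\wm')=x(\wF,\wm)/\prod_{(i,j)\in R}x_{i,j}$. The content to verify is that $R$ never contains a whole block $R_l$ of the \emph{original} decomposition of $(\wF,\wm)$. This is where one must be a little careful: a block $R_l$ with $\#R_l=2$ can only be hit by deleting one of its two squares, which — by Lemma~\ref{rmv 1} — deletes the \emph{whole} block from the removable set and also removes that column entirely from the diagram, so the partner square is no longer part of the diagram of the intermediate pair and can never be removed subsequently; thus at most one element of such an $R_l$ ends up in $R$. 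For a block $R_l$ with $\#R_l>2$, the squares of $R_l$ all lie in a single column $j$ which contains $\#R_l-1$ white squares and one black square of the diagram of $(\wF,\wm)$; I would check, using \eqref{eq:R_l decomp} and Lemma~\ref{one generated}, that at most $\#R_l-1$ of these can ever be removed along any chain (removing the last one would force the removal of the lone black square together with it, i.e. would collapse into the $2$-element situation at an earlier intermediate stage). Either way $R_l\not\subset R$.

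For the ``if'' direction I would, conversely, start from a set $R\subset\rmv(\wF,\wm)$ with $R_l\not\subset R$ for every $l$ and $x(\wF',\wm')=x(\wF,\wm)/\prod_{(i,j)\in R}x_{i,j}$, and exhibit an explicit descending chain from $(\wF,\wm)$ to $(\wF',\wm')$. Writing $R=\bigsqcup_l (R\cap R_l)$, I handle one block at a time: within a block $R_l$ with $j=j_{r_l}$ and $\#R_l-1$ white squares available, since $R\cap R_l\subsetneq R_l$ we may remove the chosen white squares of $R\cap R_l$ one by one (each such removal is a legitimate cover of the form $(\wF_r,\wm)$ by Lemma~\ref{rmv 1}), and if the black square $(u,j)$ also lies in $R\cap R_l$ we remove it via the cover $(\wF_t,\wm_{\<i_t\>})$; the hypothesis $R_l\not\subset R$ guarantees that we are never forced into the ``$\#R_l=2$'' collapse prematurely, so at every intermediate stage the pair obtained is still admissible and \eqref{diagram recovers} identifies it unambiguously. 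Concatenating these per-block chains and using \eqref{diagram recovers} to see that the endpoint is exactly $(\wF',\wm')$ finishes the argument.

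The main obstacle I anticipate is the bookkeeping in the ``only if'' direction: one must argue that along \emph{every} saturated chain, not just some chosen one, the accumulated removal set $R$ avoids containing a full block $R_l$, and this requires understanding precisely how the block structure of $\rmv$ degenerates under covers — in particular that once a column has only its black square left, that black square is not removable on its own (it is not in $\rmv$ of the intermediate pair because there is no white square left in that column), so the "last white + black" of a block can never both be deleted. Making this rigorous amounts to a careful reading of \eqref{eq:R_l decomp} and Lemma~\ref{rmv 1} applied to the intermediate pairs, together with the invariant that the set of columns containing a white square can only shrink along the chain.
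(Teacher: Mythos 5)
Your proposal is correct and takes essentially the same route as the paper: the cover relations are read off from Lemma~\ref{rmv 1}, the ``only if'' direction tracks how the removable set can only shrink along a descending chain so that each block $R_l$ loses at most $\#R_l-1$ elements, and the ``if'' direction builds an explicit chain deleting the elements of $R$ one at a time (the paper packages this as an induction on $\#R$, choosing the black-square cover only when it is the sole remaining element of $R\cap R_l$). One phrasing slip: when a size-two block is hit, the partner square is not removed from the diagram of the intermediate pair, it merely ceases to lie in its removable set because its column no longer contains a white square of $\wF$ --- which is exactly the correct justification you supply in your final paragraph, so the argument stands.
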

\begin{proof}
Suppose $(\wF, \wm) \ge (\wF', \wm')$.
This means there exists a descending chain of cover relations
starting from $(\wF,\wm)$ and going down to $(\wF',\wm')$.
Applying Lemma~\ref{rmv 1} to each cover relations,
we obtains a subset $R \subset \rmv(\wF, \wm)$ 
such that $R_l \not \subset R$ for all $l$ and
$$
x(\wF', \wm')=x(\wF,\wm)/\prod_{(i,j) \in R}x_{i,j}.
$$

Conversely, suppose there exists such a subset $R \subset \rmv(\wF, \wm)$.
We will show $(\wF, \wm) \ge (\wF', \wm')$ by induction on $\#R$.
The case $\#R = 0$ is clear, since $x(\wF, \wm) = x(\wF', \wm')$
implies $(\wF, \wm) = (\wF', \wm')$ as is stated above.

Assume $\#R \ge 1$. Take $l$ so that $R \cap R_l \ne \emptyset$, and use the description \eqref{eq:R_l decomp} of $R_l$.
If $R \cap R_l =\{ (u,j)\}$, then we use $(\wF_t, \wm_{\<i_t\>})$.  
Note that $x(\wF_t, \wm_{\<i_t\>})=x(\wF, \wm)/x_{u, j}$ and  $(\wF, \wm) \gtrdot (\wF_t, \wm_{\<i_t\>})$. 
Moreover,  $(\wF_t, \wm_{\<i_t\>})$, $(\wF', \wm')$  and $R \setminus \{(u,j)\}$ keep the 
assumption of the proposition. Hence we have  $(\wF_t, \wm_{\<i_t\>}) \geq (\wF', \wm')$ by the induction hypothesis, 
and we are done. 

 If $R \cap R_l \ne \{ (u,j)\}$, the  we have $(i_r, j) \in R \cap R_l$ 
for some $s \le r \le t$.  
Note that $x(\wF_r, \wm)=x(\wF, \wm)/x_{i_r, j}$ and  $(\wF, \wm) \gtrdot (\wF_r, \wm)$. 
Moreover,  $(\wF_r, \wm)$, $(\wF', \wm')$  and $R \setminus \{(i_r,j)\}$ keep the 
assumption of the proposition (in this case, that $R_l \not \subset R$ is crucial to see 
$R \setminus \{ (i_r,j) \} \subset \rmv(\wF_r, \wm)$). 
Hence we have  $(\wF_r, \wm) \geq (\wF', \wm')$ by the induction hypothesis, 
and we are done. 
\end{proof}

Let $(\wF,\wm)$ be an element of $A_I$, and $R_1, \ldots, R_k$ as in Proposition~\ref{rmv 2}. 
By the proposition, it is easy to see that  the order ideal 
$\{ \, (\wF', \wm') \mid  (\wF', \wm') \le  (\wF, \wm) \, \}$ of $A_I$ is isomorphic to 
the Cartesian product $$(2^{R_1} \setminus \{ \emptyset\}) \times (2^{R_2} \setminus \{ \emptyset \}) \times \cdots 
\times (2^{R_k} \setminus \{ \emptyset \} ).$$ 
  
Under the assumption that $I$ is generated in one degree, Nagel and Reiner \cite{NR} constructed a polytopal complex (hence a regular CW complex)
which supports  a minimal free resolution of $\wI$ (or $I$, $I^\sq$). See \cite[Theorem~3.13]{NR}.  
If we regard their polytopal complex as a poset in the natural way, then it is isomorphic to our $A_I$ by Proposition~\ref{rmv 2}. 
Hence we have the following. (Since their complex is a regular CW complex,
the choice of an incidence function does not cause a problem.) 

\begin{prop} 
Let $I$ be a Borel fixed ideal generated in one degree. 
Then Nagel-Reiner description of a minimal free resolution of $\wI$ coincides with our $\wP_\bullet$ 
(more precisely, the truncation $\wP_{\ge 1}$, since $\wP_\bullet$ is a resolution of $\wS/\wI$, not $\wI$ itself).  
\end{prop}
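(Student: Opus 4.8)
The plan is to identify $\wP_{\ge1}$ (with its $\ZZ^{n\times d}$-grading) with the cellular chain complex of the Nagel--Reiner polytopal complex, exploiting that the latter is a regular CW complex and hence carries an essentially unique cellular chain complex once its face poset and its cell labels are pinned down. The two substantive inputs are Proposition~\ref{rmv 2} (which governs the poset) and the regularity of the Nagel--Reiner complex (which makes the sign data irrelevant up to isomorphism).

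First I would recall from \cite[Theorem~3.13]{NR} the precise shape of their polytopal complex $X$: its cells are indexed by the admissible pairs $(\wF,\wm)$ for $\wI$, a pair with $\#\wF=q$ contributing a $q$-dimensional cell that is a product of simplices, and each cell carries the label $\lcm$ of the labels $\wm'$ of its vertices. By the paragraph following Proposition~\ref{rmv 2} --- which rests on Proposition~\ref{rmv 2} together with the description \eqref{eq:R_l decomp} of the blocks $R_l$ in $\rmv(\wF,\wm)=\bigsqcup_{l=1}^k R_l$ --- the order ideal of $A_I$ below $(\wF,\wm)$ is isomorphic to $(2^{R_1}\setminus\{\varnothing\})\times\cdots\times(2^{R_k}\setminus\{\varnothing\})$, i.e. to the face poset of $\Delta^{R_1}\times\cdots\times\Delta^{R_k}$, and $\sum_l(\#R_l-1)=\#\wF$ by \eqref{eq:R_l decomp}. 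Hence $(\wF,\wm)\mapsto(\text{corresponding cell})$ is an isomorphism from $A_I$ onto the face poset of $X$. Proposition~\ref{rmv 2} also shows that the minimal elements below $(\wF,\wm)$ are exactly the pairs $(\varnothing,\wm')$ obtained by keeping one square of each block $R_l$ (and every square outside $\rmv(\wF,\wm)$); taking $\lcm$ of the corresponding labels $\wm'=x(\varnothing,\wm')$ recovers every square of $x(\wF,\wm)$, so the Nagel--Reiner label of the cell $(\wF,\wm)$ equals $x(\wF,\wm)$ and the multigradings agree with ours.

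Next I would check that, under this bijection, the differential of $\wP_{\ge1}$ is literally a cellular differential on $X$. Combining the defining formula for $\diff$ with Lemma~\ref{rmv 1} and \eqref{diagram recovers}, every basis element $e(\wF',\wm')$ occurring in $\diff(e(\wF,\wm))$ satisfies $x(\wF',\wm')=x(\wF,\wm)/x_{i,j}$ for a unique $(i,j)\in\rmv(\wF,\wm)$, equivalently $(\wF,\wm)\gtrdot(\wF',\wm')$, and its coefficient is $\varepsilon_0\cdot x(\wF,\wm)/x(\wF',\wm')$ for a sign $\varepsilon_0=\varepsilon_0((\wF,\wm),(\wF',\wm'))\in\{\pm1\}$ read off the formula. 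Thus $\wP_{\ge1}$ is the $X$-cellular complex twisted by the labelling $x(-)$ and the sign function $\varepsilon_0$. Since $\diff\circ\diff=0$ (Proposition~\ref{diff}) and the monomials $x(\wF,\wm)/x(\wF',\wm')$ are nonzerodivisors that telescope across any rank-two interval, vanishing of $\diff^2$ forces the two intermediate signs along such an interval to be opposite; as $X$ is polytopal, hence a \emph{thin} regular CW complex, every rank-two interval of its face poset has exactly two intermediate cells, so $\varepsilon_0$ obeys the relations of an incidence function on $X$.

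Finally I would invoke the standard fact (see \cite{BW}) that on a connected regular CW complex the incidence function is unique up to the obvious coboundary equivalence, and that this equivalence carries over verbatim to the $\ZZ^{n\times d}$-graded cellular chain complexes: two cellular resolutions supported on the same regular CW complex with the same multigraded labelling are isomorphic as complexes of graded free modules. Applied to $\wP_{\ge1}$ (incidence function $\varepsilon_0$) and the Nagel--Reiner resolution of $\wI$ (their incidence function, the same labelling by Step~2), this shows the two coincide; both are minimal by Theorem~\ref{main} and \cite[Theorem~3.13]{NR}. The one point demanding genuine care is precisely this passage from ``same face poset and same labels'' to ``isomorphic complexes'', i.e. the incidence-function bookkeeping of the previous paragraph; the rest is the matching of two combinatorial descriptions already carried out by Proposition~\ref{rmv 2}.
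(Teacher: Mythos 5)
Your proposal is correct and follows essentially the same route as the paper: the poset isomorphism between $A_I$ and the Nagel--Reiner face poset via Proposition~\ref{rmv 2} (with matching multigraded labels), plus the observation that regularity of their polytopal complex makes the choice of incidence function immaterial. You merely spell out the incidence-function bookkeeping that the paper dispatches with a parenthetical remark, so no further changes are needed.
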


\begin{exmp}\label{exmp:tetra-pri}
Let us consider the ideal $I = (x_1^2, x_1x_2, x_1x_3, x_1x_4, x_2^2, x_2x_3, x_2x_4)$ of $\kk[x_1,x_2,x_3,x_4]$
as in Remark~\ref{rem:diff res}.
Then
$$
\wI = (x_{1,1}x_{1,2}, x_{1,1}x_{2,2},  x_{1,1}x_{3,2}, x_{1,1}x_{4,2}, x_{2,1}x_{2,2},
x_{2,1}x_{4,2}).
$$
We set $\wF := \{ (1,2),(2,2),(3,2) \}$ and $\wF' := \{ (1,1),(2,2),(3,2)\}$.
It is straightforward to verify that
$(\wF,x_{1,1}x_{4,2})$ and $(\wF', x_{2,1}x_{4,2})$ are the maximal admissible pairs in $A_I$.
The diagrams of these admissible pairs are as follows.

\begin{figure}[h]
\setlength\unitlength{.4mm}
\begin{minipage}{.3\textwidth}
\begin{center}
\begin{picture}(65,70)(0,0)
\thicklines
\put(20,40){\shade\path(0,0)(0,10)(10,10)(10,0)(0,0)}
\put(30,40){\whiten\path(0,0)(0,10)(10,10)(10,0)(0,0)}
\put(30,30){\whiten\path(0,0)(0,10)(10,10)(10,0)(0,0)}
\put(30,20){\whiten\path(0,0)(0,10)(10,10)(10,0)(0,0)}
\put(30,10){\shade\path(0,0)(0,10)(10,10)(10,0)(0,0)}
\put(5,30){\makebox(0,0){$i$}}
\put(30,70){\makebox(0,0){$j$}}
\put(10,13){$4$}
\put(10,23){$3$}
\put(10,33){$2$}
\put(10,43){$1$}
\put(23,56){$1$}
\put(33,56){$2$}
\end{picture}
\end{center}
\end{minipage}
\begin{minipage}{.3\textwidth}
\begin{center}
\begin{picture}(65,70)(0,0)
\thicklines
\put(20,40){\whiten\path(0,0)(0,10)(10,10)(10,0)(0,0)}
\put(20,30){\shade\path(0,0)(0,10)(10,10)(10,0)(0,0)}
\put(30,30){\whiten\path(0,0)(0,10)(10,10)(10,0)(0,0)}
\put(30,20){\whiten\path(0,0)(0,10)(10,10)(10,0)(0,0)}
\put(30,10){\shade\path(0,0)(0,10)(10,10)(10,0)(0,0)}
\put(5,30){\makebox(0,0){$i$}}
\put(30,70){\makebox(0,0){$j$}}
\put(10,13){$4$}
\put(10,23){$3$}
\put(10,33){$2$}
\put(10,43){$1$}
\put(23,56){$1$}
\put(33,56){$2$}
\end{picture}
\end{center}
\end{minipage}
\end{figure}

It follows that $x(\wF, x_{1,1}x_{4,2}) = x_{1,1}x_{4,2} \times x_{1,2}x_{2,2}x_{3,2}$ and
$x(\wF', x_{2,1}x_{4,2}) = x_{2,1}x_{4,2} \times x_{1,1}x_{2,2}x_{3,2}$.
The sets $\rmv(\wF, x_{1,1}x_{4,2})$ and $\rmv(\wF', x_{2,1}x_{4,2})$ are decomposed
as follows:
\begin{align*}
&\rmv(\wF, x_{1,1}x_{4,2}) = R_1 = \{ (1,2), (2,2), (3,2), (4,2) \}, \\
&\rmv(\wF', x_{2,1}x_{4,2}) = R_1 \sqcup R_2 = \{ (1,1), (2,1) \} \sqcup
\{ (2,2), (3,2), (4,2) \}.
\end{align*}
The order ideals generated by $(\wF,x_{1,1}x_{4,2})$
and $(\wF', x_{2,1}x_{4,2})$, respectively, are
\begin{align*}
&(2^{\{ (1,2),(2,2),(3,2),(4,2) \}} \setminus \{ \emptyset \}), \\
&(2^{\{ (1,1),(1,2) \}} \setminus \{ \emptyset \}) \times
(2^{\{ (2,2),(3,2),(4,2) \}} \setminus \{ \emptyset \}).
\end{align*}
The former corresponds to a tetrahedron, and the latter to a triangular prism;
indeed the polytopal complex supporting the minimal free resolution of $I$
as in Theorem~\ref{main} can be realized by gluing them as in the figure below.

\begin{figure}[htbp]
\begin{center}
\includegraphics[width = 7cm]{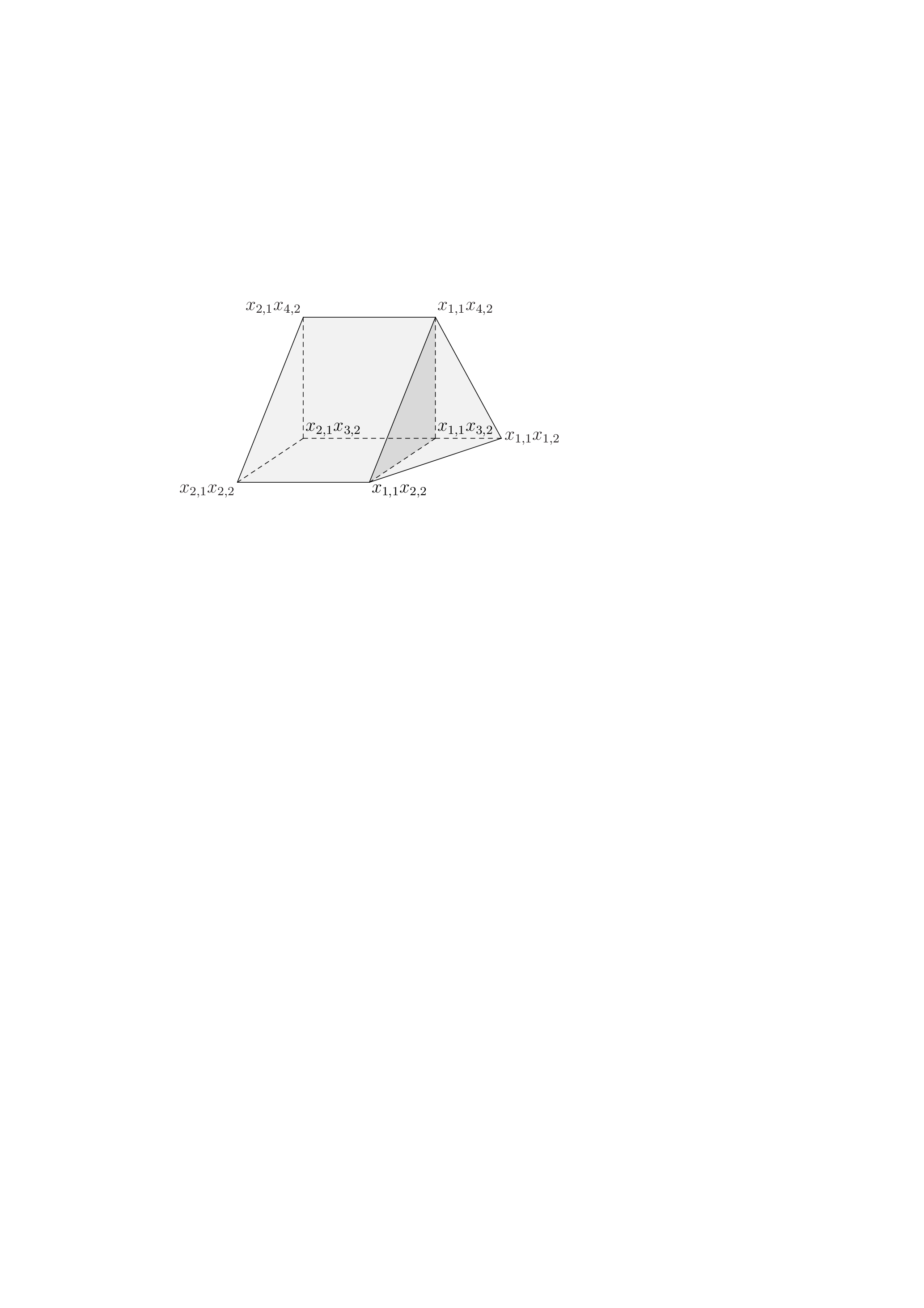}
\end{center}
\end{figure}
\end{exmp}

In the next section, we will show that our resolution $\wP_\bullet$ is cellular even if 
$I$ is {\it not} generated in one degree.  
 
\section{Relation to Batzies-Welker theory}
We use the same notation as in the previous sections. 
In particular, $I$ is a Borel fixed ideal, and $\wI :=\BoX(I)$ is the alternative polarization. 

In \cite{BW}, Batzies and Welker connected the theory of {\it cellular resolutions} of 
monomial ideals with Forman's discrete Morse theory (\cite{F}). 
In this section,  we will  show that our resolution $\wP_\bullet$ of $\wS/\wI$ 
can also be obtained by their construction.  

\begin{dfn}\label{shellable}
A monomial ideal $J$ is called {\it shellable} if there is a total order $\sqsubset$ on $G(J)$ 
satisfying the following condition.  
\begin{itemize}
\item[$(*)$] For any $\m, \m' \in G(J)$ with $\m \sqsupset \m'$, 
there is an $\m'' \in G(J)$ such that $\m \sqsupseteq \m''$, $\deg\left(\frac{\lcm(\m, \m'')}{\m}\right)=1$ 
and $\lcm(\m, \m'')$ divides $\lcm(\m, \m')$.   
\end{itemize}
\end{dfn}

\begin{rem}\label{lq}
One can show that $J$ is shellable in the above sense if and only if $I$ has linear quotients  
in the sense of \cite{HT}. 
\end{rem}
 
For a shellable monomial ideal, we have a Batzies-Welker type {\it minimal} free resolution. 
Since a Borel fixed ideal is shellable, we can apply their method. 
However, their description is not as explicit as that of Eliahou and Kervaire 
(see, for example, \cite[\S6.2]{JW}).       
It makes our argument in this section complicated. 

\medskip

Let $\sqsubset$ be  the total order on $G(\wI) =\{ \, \wm \mid \m \in G(I) \, \}$ such that $\wm' \sqsubset \wm$ 
if and only if $\m' \succ \m$ in the lexicographic order of $S$ with $x_1 \succ x_2 \succ \cdots \succ x_n$.
Hence the order $\sqsubset$ is just the opposite of the lexicographic order.
In the rest of this section, $\sqsubset$ means this order.

\begin{lem}\label{wI is shelllable}
The order $\sqsubset$ makes $\wI$ shellable. 
\end{lem}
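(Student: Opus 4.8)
The plan is to verify directly that the order $\sqsubset$ satisfies condition $(*)$ of Definition~\ref{shellable} for the minimal generators of $\wI$. Given $\wm, \wm' \in G(\wI)$ with $\wm \sqsupset \wm'$, by definition of $\sqsubset$ this means $\m \prec \m'$ in the lexicographic order on $S$, i.e. $\m' \succ \m$. So there is an integer $l$ such that the exponent of $x_k$ in $\m'$ equals that in $\m$ for $k < l$, while the exponent of $x_l$ in $\m'$ exceeds that in $\m$. Writing $\m = \prod x_k^{a_k}$ and $b_i = 1+\sum_{k=1}^i a_k$, the key observation (exactly as in the proof of Lemma~\ref{sec:bpolfilt}) is that $x_{l,b_l} \mid \wm'$ and $x_{l,b_l} \nmid \wm$; moreover $l < \nu(\m)$, since otherwise $\m \mid \m'$, contradicting $\m' \in G(I)$.

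The natural candidate for $\m''$ is $(\m)_{\<l\>} = g(\fb_l(\m))$, i.e. $\wm'' := \wm_{\<l\>}$. First I would check $\wm'' \in G(\wI)$: by Lemma~\ref{m_i_r}, $\m_{\<l\>}$ does not divide $\m$ (its exponent of $x_l$ is strictly larger, agreeing with $\fb_l(\m)$ in variables $x_k$, $k \le l$), so $\m_{\<l\>} \in G(I)$, hence $\wm'' \in G(\wI)$. Next, $\m_{\<l\>} \succ \m$ by the same argument (they agree below $x_l$ and $\m_{\<l\>}$ has a larger exponent at $x_l$), so $\wm'' \sqsubset \wm$, giving $\wm \sqsupseteq \wm''$. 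For the degree-one condition: by Lemma~\ref{rem for adm}(ii) we have $x_{k,b_l}\cdot \BoX(\fb_l(\m)) = x_{l,b_l}\cdot \wm$ where $k = \min\{j > l \mid a_j > 0\}$; since $\BoX(\fb_l(\m))$ differs from $\wm_{\<l\>}=\BoX(g(\fb_l(\m)))$ only in variables $x_{i,\cdot}$ with $i > l$ (here I would invoke Lemma~\ref{m_i_r} together with the fact that $g(-)$ only modifies the tail), the variable $x_{l,b_l}$ divides $\wm''$ but not $\wm$, and I must argue that it is the \emph{only} generator of $\wm''$ not dividing $\wm$ — equivalently $\lcm(\wm,\wm'')/\wm = x_{l,b_l}$. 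Finally, $\lcm(\wm,\wm'')$ divides $\lcm(\wm,\wm')$ because $x_{l,b_l} \mid \wm'$ and $\wm''$ agrees with $\wm$ (hence is dominated by $\lcm(\wm,\wm')$) in all other variables.

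The main obstacle I anticipate is the degree-one claim $\deg(\lcm(\wm,\wm'')/\wm) = 1$: one has to control precisely how $\BoX(-)$ and the Eliahou--Kervaire map $g(-)$ interact, since $\m_{\<l\>}$ need not equal $\fb_l(\m)$ in general (this is exactly the subtlety flagged throughout \S2, e.g. in the second Example and in the discussion before Lemma~\ref{one generated}). The point to nail down is that passing from $\fb_l(\m)$ to $g(\fb_l(\m))$ only alters exponents of variables $x_i$ with $i \ge k > l$, and that the polarization $\BoX$ of this tail change, after multiplying by the appropriate monomial, contributes no \emph{new} $\wS$-variables beyond those already dividing $x_{l,b_l}\cdot\wm$ — so that in $\lcm(\wm,\wm'')$ the only variable not already in $\wm$ is $x_{l,b_l}$. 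Once this bookkeeping is in place, $(*)$ holds and $\wI$ is shellable; alternatively, since Remark~\ref{lq} identifies shellability with having linear quotients, one could instead cite that $\wI$ has linear quotients (which is implicit in \S2, where the short exact sequences and Lemma~\ref{sec:bpolfilt} exhibit the colon ideals $(\wt I_{r-1}:\wt\m_r)$ as generated by variables) — but I would prefer the self-contained verification of $(*)$ above.
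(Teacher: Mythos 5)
Your proposal is correct and takes essentially the same route as the paper: the paper's proof chooses the same index $l$ (the first variable where $\m'$ beats $\m$, equivalently the smallest $i$ with $x_i \mid \lcm(\m,\m')/\m$), takes the same witness $\wm''=\wm_{\<l\>}$, and simply refers to the proof of Lemma~\ref{sec:bpolfilt} for the verification you carry out. The one point you flag as an obstacle — that $\lcm(\wm,\wm'')/\wm = x_{l,b_l}$ — is exactly the divisibility $\wm_{\<l\>} \mid x_{l,b_l}\cdot\wm$ already recorded after Lemma~\ref{rem for adm} (and used in the proof of Lemma~\ref{sec:bpolfilt}), which follows since $\m_{\<l\>}=g(\fb_l(\m))$ is the head of the Eliahou--Kervaire decomposition of $\fb_l(\m)$, so $\BoX(\m_{\<l\>})$ divides $\BoX(\fb_l(\m))$; hence no extra bookkeeping is needed.
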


\begin{proof}
If we set $l := \min \{ \, i \mid \text{$x_i$ divides $\lcm (\m, \m')/\m$} \, \}$,  
then $\wm''= \wm_{\< l \>}$ satisfies the expected property (see also the proof of Lemma~\ref{sec:bpolfilt}). 
\end{proof}

Hence we have a Batzies-Welker type minimal free resolution  of  $\wI$. 
This resolution is cellular, and the corresponding CW complex $X_A$ 
is obtained from the simplex $X$ (essentially, the power set $2^{G(\wI)}$, which supports 
the Taylor resolution of $\wI$) using discrete Morse theory. 

The following construction is taken from \cite[Theorems~3.2 and 4.3]{BW}.  
For the background of their theory, the reader is recommended to consult the original paper. 


\medskip

For $\emptyset \ne \gs \subset G(\wI)$,  let $\wm_\gs$  denote the largest element of $\gs$ with respect to 
the order $\sqsubset$, and set $$\lcm(\gs):= \lcm \{ \, \wm \mid \wm \in \gs \, \}.$$
For simplicity, set $(\wm_\gs)_{\<i\>} := \BoX((\m_\gs)_{\<i\>})$. This notation is confusing 
about the order of the operations $(-)_{\<i\>}$ and  $\BoX(-)$, but we could not find another concise symbol.

\begin{dfn}\label{order gs} 
We define a total order $\prec_{\gs}$ on $G(\wI)$ as follows. 
\begin{itemize} 
\item Set $N_\gs:= \{ \, (\wm_\gs)_{\<i\>} \mid \text{$1 \le i < \nu(\m_\gs)$,  
$(\wm_\gs)_{\<i\>}$ divides $\lcm(\gs)$} \, \}$. 
\item For all $\wm \in N_\gs$ and $\wm' \in G(\wI) \setminus N_\gs$, we have $\wm \prec_{\gs} \wm'$.
\item The restriction of $\prec_{\gs}$ to $N_\gs$ coincides with $\sqsubset$, and the same is true for 
the restriction to $G(\wI) \setminus N_\gs$. 
\end{itemize}
\end{dfn}

\begin{rem}
In the above definition,  $N_\gs$ plays the same role as  $\{\, n_j^m \mid j \in J_m\, \}$ in the proof of \cite[Proposition~4.3]{BW}. 
For a general shellable monomial ideal (e.g., a Borel fixed ideal),
there is an ambiguity in the choice of $\{\, n_j^m\mid j \in J_m \, \}$, 
but $N_\gs$ is the unique one in our case. In this sense, $\wI$ is simpler than $I$ itself.  
\end{rem}

Let $X$ be the $(\#G(\wI)-1)$-simplex associated with $2^{G(\wI)}$ (more precisely,  $2^{G(\wI)} \setminus \{\emptyset \}$). 
Hence we freely identify $\gs \subset G(\wI)$ with the corresponding cell of the simplex $X$. 
Let $G_X$ be the directed graph defined as follows. 
\begin{itemize}
\item The vertex set of $G_X$ is $2^{G(\wI)} \setminus \{ \emptyset \}$.  
\item For $\emptyset \ne \gs, \gs' \subset G(\wI)$, there is an arrow $\gs \to \gs'$ if and only if 
$\gs \supset \gs'$ and $\#\gs = \# \gs'+1$.  
\end{itemize}

For $\gs =\{ \, \wm_1, \wm_2, \ldots, \wm_k \, \}$ with $\wm_1 \prec_{\gs} \wm_2 \prec_{\gs}  
\cdots \prec_{\gs}  \wm_k \, (=\wm_\gs)$ and $l \in \NN$ with $1 \le l <k$,  
set $\gs_l:= \{ \, \wm_{k-l}, \wm_{k-l+1}, \ldots, \wm_k \, \}$ and 
$$u(\gs):= \sup \{ \,  l \mid \text{$\exists \wm \in G(I)$ s.t. $\wm \prec_{\gs} \wm_{k-l}$ and $\wm | \lcm(\gs_l)$ }\, \}.$$ 
If $u:=u(\gs) \ne -\infty$, we can define  
$$
\wn_\gs := \min\nolimits_{\prec_{\gs}} \{ \, \wm \mid \text{$\wm$ divides $\lcm(\gs_u)$} \, \}.
$$
Let $E_X$ be the set of edges of $G_X$. We define a subset $A$ of $E_X$ by 
$$
A:= \{ \, \gs  \cup \{ \wn_\gs \} \to \gs \mid u(\gs) \ne -\infty,  \wn_\gs \not \in \gs \, \}.
$$
Let $G_X^A$ be the directed graph with the vertex set $2^{G(\wI)} \setminus \{ \emptyset \}$ (i.e., same as $G_X$) and the set of edges 
$$
(E_X \setminus A) \cup \{ \, \gs \to \gt \mid (\gt \to \gs) \in A \, \}.
$$
In the proof of \cite[Theorem~3.2]{BW}, it is  shown that $A$ is a {\it matching}, i.e.,  
every $\gs$ occurs in at most one edges of $A$, and
moreover it is {\em acyclic}, i.e., $G_X^A$ has no directed cycle. 
We say $\emptyset \ne \gs \subset G(\wI)$ is {\it critical}, if it does not occur in any edge of $A$. 
A directed path  in $G_X^A$ is called a {\it gradient path}. 

\begin{exmp}\label{exmp:matching}
Let $I = (x^2, xy,xz,xw, y^2,yz, yw)$ be the ideal in Example~\ref{exmp:tetra-pri}.
(Here we set $x := x_1, y:=x_2, z:=x_3, w:=x_4$).
Then
$$
x_1x_2 \sqsubset x_1y_2 \sqsubset x_1z_2 \sqsubset x_1w_2 \sqsubset y_1y_2 \sqsubset y_1z_2 \sqsubset y_1w_2.
$$
Let $\gs := \{ x_1w_2, y_1y_2, y_1z_2 \}$. Then  $\wm_\gs = y_1z_2$, $\lcm(\gs) = x_1y_1y_2z_2w_2$,
and $N_\gs =  \{ x_1z_2, y_1y_2\}$.
Hence 
$$
x_1z_2 \prec_\gs y_1y_2 \prec_\gs x_1x_2 \prec_\gs x_1y_2 \prec_\gs x_1w_2 \prec_\gs y_1z_2 \prec_\gs y_1w_2.
$$
With respect to $\prec_\gs$, the elements $\gs$ are ordered as $\{ y_1y_2, x_1w_2, y_1z_2 \}$.
Since $x_1z_2 \prec_\gs y_1y_2$ and $x_1z_2$ divides $\lcm(\gs_2) = \lcm(y_1y_2, x_1w_2, y_1z_2)$,
we see that $u(\gs) = 2$ and $\wn_\gs = x_1z_2$.
Thus $\{ x_1z_2, x_1w_2, y_1y_2, y_1z_2 \} = \gs \cup \{ \wn_\gs \} \to \gs$ belongs to the matching $A$ for $I$.

It is a routine to verify that there exists the following gradient path
\begingroup
\xymatrixcolsep{-4pc}
$$
\xymatrix{
\{ x_1w_2, y_1y_2, y_1z_2, y_1w_2\}   \ar[dr] \ar@{-->}[drrrrr]& & \{ x_1z_2,x_1w_2, y_1y_2, y_1z_2 \}  \ar[dr]&
& \{ x_1y_2, x_1z_2, x_1w_2 , y_1y_2 \} \ar[dr] & \\
& \{ x_1w_2, y_1y_2, y_1z_2 \}  \ar@{=>}[ur]&                             &
\{ x_1z_2, x_1w_2, y_1y_2 \}  \ar@{=>}[ur]&                               & \{ x_1y_2, x_1z_2, x_1w_2 \}
}
$$
\endgroup
where double arrows denotes reversed ones (hence their original arrows belong to the matching $A$).
Note that $\{ x_1w_2, y_1y_2, y_1z_2, y_1w_2\}$ and $\{ x_1y_2, x_1z_2, x_1w_2 \}$
are critical (see \eqref{shape of sigma}).
\end{exmp}

Forman's discrete Morse theory \cite{F} (see also \cite{C}) guarantees the existence of a CW complex $X_A$ with the following conditions.
\begin{itemize}   
\item There is a one-to-one correspondence between the $i$-cells of $X_A$ and the {\it critical} $i$-cells of $X$. 
\item $X_A$ is contractible, that is, homotopy equivalent to $X$.  
\end{itemize}
The cell of $X_A$ corresponding to a critical cell $\gs$ of $X$ is denoted by $\gs_A$. By \cite[Proposition~7.3]{BW}, 
the closure of $\gs_A$ contains $\gt_A$ if and only if there is a gradient path from $\gs$ to $\gt$. 
See also Proposition~\ref{gradient path} below  and the argument before it. 

The following is a direct consequence of \cite[Theorem~4.3]{BW}. 

\begin{prop}[{Batzies-Welker, \cite{BW}}] 
With the above notation, the CW complex $X_A$ supports a minimal free resolution $\cF_{X_A}$ of $\wI$.  
\end{prop}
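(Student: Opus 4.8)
The plan is to read this off from \cite[Theorem~4.3]{BW}, which says that for a shellable (equivalently, linear-quotient) monomial ideal the CW complex obtained from the full simplex on its minimal generators by collapsing along the canonical acyclic matching of \cite[Theorem~3.2]{BW} supports a minimal cellular free resolution. All the ingredients have in fact already been assembled above: Lemma~\ref{wI is shelllable} supplies shellability of $\wI$ with respect to $\sqsubset$; Definition~\ref{order gs} and the subsequent construction produce the local orders $\prec_\gs$, the statistics $u(\gs)$ and $\wn_\gs$, the matching $A$, and the directed graph $G_X^A$; and it is recorded there, on the authority of \cite[Theorem~3.2]{BW}, that $A$ is an acyclic matching and that $X_A$ enjoys the two displayed properties from Forman's theory. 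Granting all of this, the proposition is just \cite[Theorem~4.3]{BW} applied verbatim.

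So the only genuine task is to confirm that the data $(\prec_\gs, u, \wn, A)$ written down above really is the specialization to $\wI$ of the Batzies--Welker recipe, not merely something formally similar. That recipe requires, for each generator $\wm_r$ of the shellable ideal, a set $\{\, n_j^{\wm_r} \mid j \in J_r \,\}$ of ``linear'' generators of $(\wm_1,\dots,\wm_{r-1}):\wm_r$, one for each variable occurring, and it then builds $u$, $\wn$ and $A$ exactly as here. I would check that $N_\gs$ is a legitimate such choice: Lemma~\ref{sec:bpolfilt} identifies $(\wt I_{r-1}:\wm_r)$ with the monomial prime $(x_{1,b_1},\dots,x_{\gl,b_\gl})$, $\gl = \nu(\m_r)-1$, and its proof shows that the variable $x_{i,b_i}$ is witnessed by $(\wm_r)_{\<i\>}$ because $(\wm_r)_{\<i\>} \mid x_{i,b_i}\wm_r$ by Lemma~\ref{rem for adm}~(ii); restricting to a face $\gs$ this is precisely the set $N_\gs$, and the inequality $(\wm_\gs)_{\<i\>} \sqsubset \wm_\gs$ that makes $N_\gs$ the bottom segment of $\prec_\gs$ amounts to $(\m_\gs)_{\<i\>} \succ \m_\gs$, i.e.\ Lemma~\ref{m_i_r}.

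The main, and essentially only, obstacle is therefore this reconciliation of conventions: in the general shellable setting the sets $\{\, n_j^{\wm_r}\,\}$ are not canonical, and one must verify that the choice forced by our construction is indeed $N_\gs$, so that the $u$- and $\wn$-statistics, and hence the matching $A$, coincide with those of \cite{BW}. Once that bookkeeping is done nothing remains: the bijection between $i$-cells of $X_A$ and critical $i$-cells of $X$, the contractibility of $X_A$, and the identification of $\cF_{X_A}$ with a minimal free resolution of $\wI$ are all contained in \cite[Theorems~3.2 and~4.3]{BW} together with Forman's theory.
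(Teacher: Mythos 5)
Your proposal is correct and follows essentially the same route as the paper: the paper itself offers no independent proof, stating the proposition as a direct consequence of \cite[Theorem~4.3]{BW} once the matching $A$ has been built by specializing the Batzies--Welker recipe to the shellable ideal $\wI$ (Lemma~\ref{wI is shelllable}). Your additional check that $N_\gs$ is the forced choice of the sets $\{\,n_j^m \mid j\in J_m\,\}$ from the proof of \cite[Proposition~4.3]{BW}, via Lemmas~\ref{sec:bpolfilt}, \ref{rem for adm}~(ii) and \ref{m_i_r}, is exactly the point recorded in the paper's remark following Definition~\ref{order gs}.
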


While the explicit form of $\cF_{X_A}$ will be introduced before Theorem~\ref{BW=wP}, we remark now that 
the free summands of $\cF_{X_A}$ are indexed by the cells of $X_A$ (equivalently, the critical subsets of $G(\wI)$). 

The purpose of this section is to show  that the Batzies-Welker resolution $\cF_{X_A}$ is  ``same" as our $\wP_\bullet$, 
that is, there is the isomorphism $\cF_{X_A} \cong \wP_\bullet$ given by a correspondence between  
the basis elements of $\cF_{X_A}$ and those of $\wP_\bullet$. 
For consistent notation, set $\wQ_\bullet:=\cF_{X_A}$. 
The following (long) discussion is necessary to compute the differential map of  $\wQ_\bullet$.   

\begin{lem}\label{lcm lemma}
Let $\gs$ and  $\gt$ be (not necessarily critical) subsets of  $G(\wI)$ admitting a gradient path from $\gs$ to $\gt$. 
Then $\lcm(\gt)$ divides $\lcm(\gs)$. 
\end{lem}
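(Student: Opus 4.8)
The statement concerns gradient paths in $G_X^A$ and the behavior of $\lcm$ along them. The natural strategy is to reduce to a single "step" of a gradient path and then chain the results. A gradient path from $\gs$ to $\gt$ is a directed path in $G_X^A$, and by construction every such path alternates between "down" arrows $\gs' \to \gs''$ with $\gs' \supset \gs''$, $\#\gs' = \#\gs''+1$ (these are the edges of $E_X \setminus A$, or forward edges), and "up" arrows $\gs'' \to \gs'$ coming from reversed matching edges, where $\gs' = \gs'' \cup \{\wn_{\gs''}\}$. So it suffices to prove: (i) if $\gs' \to \gs''$ is a down arrow then $\lcm(\gs'') \mid \lcm(\gs')$, which is immediate since $\gs'' \subset \gs'$; and (ii) if $\gs'' \to \gs'$ is an up arrow with $\gs' = \gs'' \cup \{\wn_{\gs''}\}$ then $\lcm(\gs') \mid \lcm(\gs'')$. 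Then an induction on the length of the gradient path finishes the proof.

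The crux is therefore step (ii): I must show that adjoining the element $\wn_{\gs''}$ to $\gs''$ does not enlarge the lcm, i.e. that $\wn_{\gs''}$ divides $\lcm(\gs'')$. But this is essentially built into the definition of $\wn_\gs$: recall that when $u(\gs) \neq -\infty$, one sets $\wn_\gs := \min_{\prec_\gs}\{\wm \mid \wm \text{ divides } \lcm(\gs_u)\}$ where $\gs_u$ is a subset of $\gs$. Hence $\wn_\gs$ divides $\lcm(\gs_u)$, which divides $\lcm(\gs)$ since $\gs_u \subseteq \gs$. Therefore $\lcm(\gs \cup \{\wn_\gs\}) = \lcm(\gs)$, and in particular $\lcm(\gs \cup \{\wn_\gs\})$ divides $\lcm(\gs)$. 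This handles the up arrows.

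So the proof assembles as follows. First I would recall that a gradient path from $\gs$ to $\gt$ is a sequence $\gs = \gs^{(0)} \to \gs^{(1)} \to \cdots \to \gs^{(m)} = \gt$ of edges in $G_X^A$, and argue by induction on $m$. The base case $m=0$ is trivial. For the inductive step, it suffices by transitivity of divisibility to show $\lcm(\gs^{(1)})$ divides $\lcm(\gs^{(0)})$ for the first edge. If that edge lies in $E_X \setminus A$ (a forward/down edge), then $\gs^{(1)} \subsetneq \gs^{(0)}$ and the divisibility is clear. If instead it is a reversed matching edge, then $\gs^{(0)} = \gt' \to \gs^{(0)}$ was in $A$ for... more precisely $\gs^{(1)} = \gs^{(0)} \cup \{\wn_{\gs^{(0)}}\}$ with $u(\gs^{(0)}) \neq -\infty$ and $\wn_{\gs^{(0)}} \notin \gs^{(0)}$; then as above $\wn_{\gs^{(0)}}$ divides $\lcm(\gs^{(0)}_{u})$ which divides $\lcm(\gs^{(0)})$, so $\lcm(\gs^{(1)}) = \lcm(\gs^{(0)})$. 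Either way $\lcm(\gs^{(1)}) \mid \lcm(\gs^{(0)})$, and the inductive hypothesis applied to the subpath $\gs^{(1)} \to \cdots \to \gs^{(m)}$ gives $\lcm(\gt) \mid \lcm(\gs^{(1)}) \mid \lcm(\gs^{(0)}) = \lcm(\gs)$.

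I do not expect any serious obstacle here; the lemma is a bookkeeping consequence of the definitions of the matching $A$ and of $\wn_\gs$. The only mild care needed is to recall precisely which arrows of $G_X^A$ go "up" versus "down" and to note that an "up" arrow is always of the form $\gs \to \gs \cup \{\wn_\gs\}$ so that the added generator is harmless for the lcm. One could also cite the relevant statement from \cite{BW} directly, but the self-contained argument above is short enough to include.
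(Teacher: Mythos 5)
Your proof is correct and follows essentially the same route as the paper: reduce to a single arrow of $G_X^A$, note that a down arrow $\gs\supset\gt$ is trivial, and that a reversed matching arrow adjoins $\wn_\gs$, which by definition divides $\lcm(\gs_u)$ and hence $\lcm(\gs)$, so the lcm is unchanged. Nothing further is needed.
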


\begin{proof}
It suffices to consider the case there is an arrow $\gs \to \gt$ in $G_X^A$.  
If $\gs \supset \gt$, the assertion is  clear. 
If  $\gt \supset \gs$, then $\gt=\gs \cup \{ \wn_\gs\}$ and the assertion follows from the fact that $\wn_\gs$ divides $\lcm(\gs)$.  
\end{proof}

Assume that  $\emptyset \ne \gs \subset G(\wI)$ is critical. 
Recall that $\wm_\gs$ denotes the largest element of $\gs$ with respect to $\sqsubset$.   
Take $\m_\gs = \prod_{l=1}^n x_l^{a_l} \in G(I)$  with $\wm_\gs=\BoX(\m_\gs)$, and set $q := \#\gs-1$. 
Then there are integers $i_1, \ldots , i_q$ with $1 \le i_1 < \ldots < i_q < \nu(\m_\gs)$ and 
\begin{equation}\label{shape of sigma}
\gs=\{ \, (\wm_\gs)_{\<i_r\>} \mid 1 \le r \le q \, \} \cup \{ \wm_\gs   \}
\end{equation}
(see the proof of \cite[Proposition~4.3]{BW}). 
Equivalently, we have $\gs =N_\gs \cup \{  \wm_\gs  \}$ in the notation of Definition~\ref{order gs}. 
Set $j_r:= 1+ \sum_{l=1}^{i_r}a_l$ for each $1 \le r \le q$, and 
$\wF_\gs :=\{ \, (i_1, j_1), \ldots, (i_q, j_q) \, \}$.  
Then $(\wF_\gs, \wm_\gs)$ is an admissible pair for $\wI$. 
Conversely, any admissible pair comes from a critical cell $\gs \subset G(\wI)$ in this way. 
Hence there is a one-to-one correspondence between critical cells and admissible pairs. 
Note that 
\begin{equation}\label{lcm of sigma}
\lcm (\gs)= \wm_\gs \times \prod_{r=1}^qx_{i_r, j_r}
\end{equation}
is the ``support" of the diagram of the admissible pair $(\wF_\gs, \wm_\gs)$ introduced in \S4. 

From now on,  we study gradient paths starting from a critical subset $\gs$ of the form \eqref{shape of sigma}. 
Since $\gs$ is critical, the first step of any path must be $\gs \to \gs \setminus \{\wm\}$ for some $\wm \in \gs$. 
Let $\gt \subset G(\wI)$  be a critical subset with $\#\gt=\#\gs-1=q$, and assume that there is a gradient path 
$\gs \to \gs \setminus \{\wm\} =\gs_0 \to \gs_1 \to \cdots \to \gs_l=\gt$. 
Since $\gt$ is critical, we have $\# \gs_{l-1} = \#\gt+1 = q+1$.  
It follows from $A$ being a matching that $\# \gs_i=q$ or $q+1$ for each $i$,
and $\gs_i$ is not critical for all $0 \le i < l$. 
Since  $\gs \setminus \{ \, (\wm_\gs)_{\<i_r\>} \, \}$ is critical for $1 \le r \le q$, 
it remains to consider a gradient path starting from $\gs \to \gs \setminus \{ \wm_\gs \}$.

\begin{prop}\label{gradient path}
Let $\gs, \gt \subset G(\wI)$ be critical subsets with $\# \gs=\#\gt+1$, 
and $(\wF_\gs, \wm_\gs)$ and $(\wF_\gt, \wm_\gt)$ 
the admissible pairs corresponding to $\gs$ and $\gt$ respectively. 
Set $\wF_\gs=\{ \, (i_1, j_1), \ldots, (i_q, j_q) \, \}$ with $i_1 <  \cdots < i_q$. 
There is a gradient path $(\gs \setminus \{ \wm_\gs \})
=\gs_0 \to \gs_1 \to  \cdots \to \gs_l =\gt$  
if and only if there is some $r \in B(\wF_\gs, \wm_\gs)$ with 
$(\wF_\gt, \wm_\gt)=((\wF_\gs)_r, (\wm_\gs)_{\<i_r\>})$. 
\end{prop}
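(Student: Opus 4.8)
The plan is to reformulate both sides of the equivalence in the language of admissible pairs via \eqref{shape of sigma} and to prove the two implications separately; the forward (``only if'') direction carries the real weight.

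\emph{Preliminaries.} Writing $\gs=\{(\wm_\gs)_{\<i_1\>},\dots ,(\wm_\gs)_{\<i_q\>}\}\cup\{\wm_\gs\}$ and $\gs_0:=\gs\setminus\{\wm_\gs\}$, I would first determine the order $\prec_{\gs_0}$ on $\gs_0$. Since $\fb_i$ raises the exponent of the smallest-index variable it touches, Lemma~\ref{m_i_r} gives, for $i<i'$, that $(\m_\gs)_{\<i\>}$ is lexicographically larger than $(\m_\gs)_{\<i'\>}$; hence the $\sqsubset$-maximal element of $\gs_0$ is $(\wm_\gs)_{\<i_q\>}$, so $\m_{\gs_0}=(\m_\gs)_{\<i_q\>}$ and $\nu(\m_{\gs_0})\ge i_q$. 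I would then compute $N_{\gs_0}$, $u(\gs_0)$ and $\wn_{\gs_0}$ from Lemmas~\ref{rem for adm}, \ref{m_i_r}, \ref{lem for adm} and \ref{sec:bpolfilt} (the last one handling the colon-ideal bookkeeping), so that the behaviour of a gradient path as it leaves $\gs_0$ is pinned down: it must begin with the ``up'' arrow $\gs_0\to\gs_0\cup\{\wn_{\gs_0}\}$, except in the degenerate case where $\gs_0$ is itself critical, which is $l=0$ and corresponds to $r=q$ with $j_1=\dots=j_q$.

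\emph{The ``if'' direction.} Given $r\in B(\wF_\gs,\wm_\gs)$, Lemma~\ref{lem for adm}(iii) forces $j_r<j_{r+1}$ when $r<q$, hence $j_r<j_s$ for every $s>r$, so by Lemma~\ref{lem for adm}(iv),(v) each iterated operation $((\m_\gs)_{\<i_s\>})_{\<i_r\>}$ is unambiguous. A direct computation then shows that $\gt$, the critical cell attached to $((\wF_\gs)_r,(\wm_\gs)_{\<i_r\>})$, is obtained from $\gs_0$ by successively trading each of $(\wm_\gs)_{\<i_q\>},(\wm_\gs)_{\<i_{q-1}\>},\dots ,(\wm_\gs)_{\<i_{r+1}\>}$ for its $\fb_{i_r}$-image and adjoining the remaining elements of $\gt$. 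I would build the gradient path one generation at a time, each generation being a ``down'' arrow removing the current $\sqsubset$-maximum followed by the matching ``up'' arrow $\wn$; at every stage one checks $u(\cdot)\ne-\infty$ and that $\wn$ is the predicted element, by the same computations as in the preliminaries, while Lemma~\ref{lcm lemma} keeps the successive $\lcm$'s consistent. The path stops exactly at $\gt$, and this uses precisely that $((\wF_\gs)_r,(\wm_\gs)_{\<i_r\>})$ is admissible, i.e. $r\in B$.

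\emph{The ``only if'' direction, and the main obstacle.} A priori a gradient path out of $\gs_0$ can be long, and the real work is to show it is sufficiently rigid. The key structural facts I would establish are: that the path alternates in dimension (each ``down'' arrow is a non-matched edge, each ``up'' arrow a reversed matched edge), so it has the shape up-down-$\cdots$-down; that $\wm_{\gs_i}$, the $\sqsubset$-maximal generator of $\gs_i$, is weakly $\sqsubset$-decreasing along the path, because any $\wn_{\gs_i}$ adjoined at an ``up'' step satisfies $\wn_{\gs_i}\sqsubset\wm_{\gs_i}$; and that each $\gs_i$ of the larger dimension is again of the shape \eqref{shape of sigma}, while each $\gs_i$ of the smaller dimension becomes so after adjoining the next $\wn$ — this propagation of \eqref{shape of sigma}, together with Lemma~\ref{lcm lemma} (so $\lcm(\gs_i)$ only shrinks), is what prevents the matching from offering ``escape routes'' other than those producing the pairs $((\wF_\gs)_r,(\wm_\gs)_{\<i_r\>})$. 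Feeding these into an induction — on the path length, or, in the spirit of the proof of Theorem~\ref{main}, on $\#G(I)$ via the subideals $I_s$ (with $\gt$ being critical fixing $\wm_\gt$) — yields $\wm_\gt=(\wm_\gs)_{\<i_r\>}$ for a suitable $r$, and reading off which $\fb_{i_r}$-branch is taken at the first step identifies $\wF_\gt$ with $(\wF_\gs)_r$. Finally Lemma~\ref{lem for adm}(iii) and Lemma~\ref{m_i_r} translate the admissibility of $(\wF_\gt,\wm_\gt)$ back into $r\in B(\wF_\gs,\wm_\gs)$. The delicate, case-heavy part — and the analogue of the bookkeeping in the proof of Proposition~\ref{diff} — will be the propagation of \eqref{shape of sigma} and the exclusion of the unwanted ``escape routes''.
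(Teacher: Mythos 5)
There are two genuine gaps, one in each direction.

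\textbf{The ``only if'' direction.} Your plan hinges on the claim that every intermediate cell of size $q+1$ along the path ``is again of the shape \eqref{shape of sigma}'' (and that the size-$q$ cells become so after adjoining the forced $\wn$). This is false: cells of the form \eqref{shape of sigma} are exactly the \emph{critical} ones, whereas every intermediate cell of a gradient path between critical cells is non-critical (it occurs in a matched edge). Concretely, in Example~\ref{exmp:matching} the cell $\{x_1z_2,x_1w_2,y_1y_2,y_1z_2\}$ occurs on the displayed gradient path but is matched, and indeed $x_1w_2$ is not of the form $(y_1z_2)_{\<i\>}$ for any $i$. Likewise, ``reading off which $\fb_{i_r}$-branch is taken at the first step'' cannot identify $r$: the first step out of $\gs_0$ is the forced up-arrow adding $\wn_{\gs_0}=((\wm_\gs)_{\<i_q\>})_{\<i_1\>}$, which is the same for every target $\gt$, so paths to different $((\wF_\gs)_r,(\wm_\gs)_{\<i_r\>})$ share their initial segment. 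The paper avoids all control of intermediate cells here: it only uses Lemma~\ref{lcm lemma} (so the diagram of $(\wF_\gt,\wm_\gt)$ sits inside that of $(\wF_\gs,\wm_\gs)$), the fact that every element occurring after the first step is $\sqsubset$-below $\wm_\gs$ (so $\wm_\gt\ne\wm_\gs$), and a count of white squares together with the staircase rigidity of diagrams of admissible pairs; this pins down $(\wF_\gt,\wm_\gt)=((\wF_\gs)_r,(\wm_\gs)_{\<i_r\>})$ with $r\in B(\wF_\gs,\wm_\gs)$ in a few lines. Your heavier path-tracking machinery is what the paper needs only later, for the \emph{uniqueness} statement (Proposition~\ref{uniqueness}, via Lemmas~\ref{element of gs_k} and \ref{path lemma}).

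\textbf{The ``if'' direction.} The path you propose --- ``a down arrow removing the current $\sqsubset$-maximum followed by the matching up arrow'', one generation per element of $(\wm_\gs)_{\<i_q\>},\dots,(\wm_\gs)_{\<i_{r+1}\>}$ --- is not in general a directed path of $G_X^A$. From $\gs_0$ the first admissible move is the up-arrow adding $\wn_{\gs_0}$, and after each down step the next up step is \emph{forced} to adjoin the $\prec$-minimal divisor of the relevant lcm, which is a hybrid monomial $((\wm_\gs)_{\<i_s\>})_{\<i_t\>}$ with \emph{small} $t$, not the $\fb_{i_r}$-image of the element you just removed; moreover in Case~$r=q$ the maximum $(\wm_\gs)_{\<i_q\>}=\wm_\gt$ must never be removed, while the elements actually deleted are $(\wm_\gs)_{\<i_1\>},(\wm_\gs)_{\<i_2\>},\dots$. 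For $1<r<q$ the true path must repeatedly cycle through the replacements $\wm_{[s,t]}\rightsquigarrow\wm_{[s-1,t]}$ for $t\le r$ before each large element can be deleted, giving roughly $2r(q-r)$ steps rather than your $2(q-r)$; your up-steps would simply not be edges of $G_X^A$. So the construction needs to be redone along the lines of the paper's Case~1/Case~2 (Steps~1 and~2) analysis, including the degenerate coincidences $\wm_{[s]}=\wm_{[q,s]}$ caused by $g(-)$, which your sketch does not address.
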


\begin{proof}
First, we  assume that there is a gradient path from $\gs \setminus \wm_\gs$ to $\gt$. 
By Lemma~\ref{lcm lemma} and  \eqref{lcm of sigma}, 
the support of diagram of $(\wF_\gt, \wm_\gt)$ is a ``subset" of that of $(\wF_\gs, \wm_\gs)$. 
By the definition of gradient paths, we have 
$\wm \sqsubseteq \max_{\prec_\gs} (\gs \setminus \{ \wm_\gs \} ) \sqsubset \wm_\gs$ for all $\wm \in \gt$. 
It follows that $\wm_\gt \ne \wm_\gs$, and the set of the black squares in the diagram of $(\wF_\gs, \wm_\gs)$ 
is different from that of $(\wF_\gt, \wm_\gt)$. 
By the assumption $\# \gs = \# \gt+1$ (equivalently, $\#\wF_\gs =\#\wF_\gt+1$), 
the number of the white square in the diagram of $(\wF_\gt, \wm_\gt)$ is one smaller than that of $(\wF_\gs, \wm_\gs)$. 
By the shapes of the diagrams of admissible pairs,  we have 
$(\wF_\gt, \wm_\gt)=((\wF_\gs)_r, (\wm_{\gs})_{\< i_r \>} )$ for some $r \in B(\wF, \wm_\gs)$.  

Next, assuming $\wF_\gt=(\wF_\gs)_r$ and $\wm_\gt =(\wm_\gs)_{\< i_r\>}$ for some $r \in B(\wF_\gs, \wm_\gs)$, 
we will construct a gradient path from $\gs \setminus \{ \wm_\gs \}$ to $\gt$. 
For short notation, set 
$$\wm_{[s]}:= (\wm_\gs)_{\<i_s\>} \qquad \text{and} \qquad \wm_{[s,t]}:= ((\wm_\gs)_{\<i_s\>})_{\<i_t\>}.$$
By \eqref{shape of sigma}, we have $\gs_0 := (\gs \setminus \{ \wm_\gs \}) =\{ \, \wm_{[s]} \mid 1 \le s\le q \, \}$ and  
\begin{eqnarray*}
\tau &=& \{ \, (\wm_\gt)_{\< i_s \>} \mid 1 \le s \le q, s \ne r\, \} \cup \{ \wm_\gt \}\\
&=& \{ \, \wm_{[r,s]} \mid 1 \le s \le q, s \ne r\, \} \cup \{ \wm_{[r]} \}.
\end{eqnarray*}

\smallskip

\noindent{\it Case 1.} 
First consider the case $r=q$. If $q=1$, then $\gs_0=\gt$. So we may assume that $q \ge 2$. 
Then $\wm_\gt= \wm_{[q]}$ is the largest element of $\sigma_0$ with respect to 
the order $\sqsubset$. (The same is true for $\gs_1, \ldots, \gs_{2(r-1)}$ below.) 

We might have  $\wm_{[s]} = \wm_{[q,s]}$ for some $s < q$. 
For instance, if $j_1=j_q$, then the equality holds for all $s$.  
Even if $j_s < j_q$ we can not ignore the effect of $g(-)$. 
However, we always have $\wm_{[q,s]} | \lcm \{ \wm_{[s]}, \wm_{[q]}\}$. 
 
First, we assume that $\wm_{[s]} \ne \wm_{[q,s]}$ for all $s < q$.  
Note that $\wm_{[q,1]}$ divides $\wn :=\lcm(\gs_0)$, and it is the smallest element in $G(\wI)$ 
with respect to the order $\prec_{\gs_0}$ defined above. 
By the present assumption that $\wm_{[1]} \ne \wm_{[q,1]}$, we have $\wm_{[q,1]} \not \in \gs_0$.  
Hence if we set $\gs_1:= \gs_0 \cup \{ \wm_{[q,1]} \}$, then $\gs_0 \to \gs_1$ is an arrow in $G_X^A$. 
Clearly,  we have an arrow $\gs_1 \to \gs_2$, where $\gs_2:= \gs_1 \setminus \{\wm_{[1]}\}$.   
If $q=2$, then $\gs_2=\gt$, and we are done. So assume that $q >2$. 
While $x_{i_1, j_1}$ divides $\wm_{[q,1]}$, it does not 
divide $\wn':=\lcm ( \gs_2  \setminus \{ \wm_{[q,1]}\})$. 
Hence $\wm_{[q,1]}$ does not divide $\wn'$, 
and $\wm_{[q,2]}$ is the smallest elements of $\{ \, \wm \in G(\wI) \mid \text{$\wm$ divides $\wn'$ } \}$  
with respect to $\prec_{\gs_2}$. If we set $\gs_3 := \gs_2 \cup \{\wm_{[q,2]}\}$ 
and $\gs_4 := \gs_3 \setminus \{ \wm_{[2]} \}$, then  $\gs_2 \to \gs_3 \to \gs_4$ is a gradient path. 
If $q=3$, then $\gs_4=\gt$. If $q>3$, we repeat this procedure till we get $\gs_{2(q-1)}$. Then 
$$\gs_{2(q-1)}=(\gs_0 \setminus \{ \, \wm_{[1]}, \ldots \wm_{[q-1]}\, \}) \cup \{ \, \wm_{[q,1]}, \ldots \wm_{[q,q-1]}\, \}=\gt,$$
and we are done. 

Next we consider the case $\wm_{[s]}=\wm_{[q,s]}$ for some $s < q$. 
Then, in the above construction, we skip the part 
$\gs_k \to \gs_k \cup \{ \wm_{[q,s]}\} \to (\gs_k \cup \{ \wm_{[q,s]}\} ) \setminus \{\wm_{[s]}\}$  
(these are ``loops" now), and just change its ``name" from $\wm_{[s]}$ to $\wm_{[q,s]}$.

\smallskip

\noindent{\it Case 2, Step 1.} Assume that $r <q$. We will use monomials $\wm_{[s,t]}$ for 
$s,t$ with $t \le r < s$. 
Clearly, $x_{i_t,j_t}$ divides  $\wm_{[s,t]}$. 
However,  by the effect of $g(-)$, $x_{i_s,j_s}$ need  not divide $\wm_{[s,t]}$, 
and we might have $\wm_{[q,t]} = \wm_{[t]}$ and $\wm_{[s, t]}=\wm_{[s+1,t]}$.   

For the simplicity, we assume that $\wm_{[q,t]} \ne \wm_{[t]}$ and 
$\wm_{[s, t]} \ne \wm_{[s+1,t]}$ for all  $t \le r < s$ in the following argument. 
If we have $\wm_{[q,t]} = \wm_{[t]}$ or $\wm_{[s, t]}=\wm_{[s+1,t]}$, we can avoid the problem as in Case~1.  
Since $r \in B(\wF_\gs,\wm_\gs)$, we have $\nu(\wm_{[r]}) > i_q$ and $x_{i_s,j_s} | \, \wm_{[r,s]}$ for all $r < s \le q$. 
Hence  $\wm_{[r,s]}$ is irrelevant to the above problem. 

We inductively define $\gs_1, \ldots, \gs_{2r-1}$  by 
$$\gs_{2s-1}:= \gs_{2s-2} \cup \{ \, \wm_{[q,s]} \, \}
=\{ \, \wm_{[q,1]}, \wm_{[q,2]}, \ldots , \wm_{[q,s]}, \wm_{[s]}, \wm_{[s+1]}, \ldots, \wm_{[q]}\, \}$$ 
for each $1 \le s \le r$, and   
$\gs_{2s}:= \gs_{2s-1} \setminus \{ \wm_{[s]}\}$ 
for each $1 \le s < r$. These are same as those in Case 1,  and we have a gradient path 
$\gs_0 \to \cdots \to \gs_{2r-1}$.  
Next we set 
$$\gs_{2r}:=\gs_{2r-1} \setminus \{\, \wm_{[q]} \, \}\\
= \{ \, \wm_{[q,1]}, \wm_{[q,2]}, \ldots , \wm_{[q,r]}, \wm_{[r]}, \wm_{[r+1]}, \ldots, \wm_{[q-1]}\, \}.$$
Of course, there is an arrow $\gs_{2r-1} \to \gs_{2r}$. If $r=q-1$, we move to Step 2 below. 
Hence we assume that $r < q-1$.  

Now $\wm_{[q-1]}$ is the largest element of $\sigma_{2r}$ with respect to $\sqsubset$. 
We inductively define $\gs_{2r+1}, \ldots, \gs_{4r-1}$ by 
\begin{eqnarray*}
\gs_{2r+2s-1}: &=& \gs_{2r+2s-2} \cup \{ \, \wm_{[q-1,s]} \, \}\\
&=&\{ \, \wm_{[q-1,t]} \mid 1 \le t \le s \, \} \cup \{ \, \wm_{[q,t]} \mid s \le t \le r \, \} \cup  \{\, \wm_{[t]} \mid r \le t \le q-1 \, \}.
\end{eqnarray*}  for $1 \le s \le r$,  and 
$\gs_{2r+2s}:= \gs_{2r+2s-1} \setminus \{ \wm_{[q, s]}\}$ for $1 \le s < r$. 
Then we have a gradient path $\gs_{2r} \to \gs_{2r+1} \to \cdots \to \gs_{4r-1}$ as before.  
Set 
\begin{eqnarray*}
\gs_{4r}&:=& \gs_{4r-1} \setminus \{\, \wm_{[q-1]} \, \}\\
        &=& 
\{ \, \wm_{[q-1,1]}, \wm_{[q-1,2]}, \ldots , \wm_{[q-1,r]}, \wm_{[q,r]}, \wm_{[r]}, \wm_{[r+1]}, \ldots, \wm_{[q-2]}\, \}.
\end{eqnarray*}
If $r=q-2$, we move to Step 2. If $r<q-2$, then we set $\gs_{4r+1}:= \gs_{4r} \cup \{ \, \wm_{[q-2,1]} \, \}$, 
$\gs_{4r+2}:= \gs_{4r+1} \setminus \{ \, \wm_{[q-1,1]} \, \}$, $\ldots, \gs_{6r}:= \gs_{6r-1} \setminus \{\wm_{[q-2]}\}$. 
We repeat this procedure till we get $\gs_{2r(q-r)}$, which equals 
$$\{ \, \wm_{[r+1,s]} \mid 1 \le s < r \, \} \cup  \{ \, \wm_{[r]} \, \} \cup 
\{ \, \wm_{[r, s]} \mid r  < s \le q \, \}$$
(since $r \in B(\wF_\gs,\wm_\gs)$, we have $\wm_{[s,r]}=\wm_{[r,s]}$ for all $s > r$ 
by Lemma~\ref{lem for adm} (iv)). Now we move to Step 2.

\smallskip

\noindent{\it Case 2, Step 2.}
Set $\gs':=\gs_{2r(q-r)}$ for short. The construction in this step is similar to Case 1. 
While we might have $\wm_{[s]}=\wm_{[r,s]}$ for some $s < r$, we can avoid the problem 
by the same way as Case 1. So we assume that  $\wm_{[s]} \ne \wm_{[r,s]}$ for all $s < r$.

Anyway, $\wm_{[r]}$ is the largest element of $\sigma'$ with respect to 
$\sqsubset$. By the same argument as in Step~1, we have  
a gradient path $\gs' =\gs'_0 \to \cdots \to \gs'_{2(r-1)}$ with $\gs'_{2s-1}=\gs'_{2s-2} \cup \{ \wm_{[r, s]}  \}$ 
and $\gs'_{2s}=\gs'_{2s-1} \setminus \{  \wm_{[r+1,s]}  \}$ for $1 \le s \le r-1$. We have  
$$\gs'_{2(r-1)}= (\gs' \setminus \{ \, \wm_{[r+1,s]} \mid 1 \le s < r \, \})   
\cup \{ \, \wm_{[r, s]} \mid 1  \le  s < r \, \} \\
=\gt.$$
\end{proof}

We also need the following. 

\begin{prop}\label{uniqueness}
Let $\gs, \gt \subset G(\wI)$ be as in Proposition~\ref{gradient path}.  Then the gradient path constructed in the proof of 
Proposition~\ref{gradient path} is the unique one connecting $\gs \setminus \{ \wm_\gs \}$ with $\gt$.  
\end{prop}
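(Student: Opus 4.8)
The plan is to prove uniqueness by closely tracking the structure of any gradient path emanating from $\gs\setminus\{\wm_\gs\}$ and showing that at every step the matching $A$ and the orders $\prec_{\gs_i}$ leave no choice. The key point is that a gradient path alternates between ``up'' moves $\gs_i\to\gs_i\cup\{\wn_{\gs_i}\}$ (the reversed matching edges) and ``down'' moves $\gs_{i+1}\to\gs_{i+1}\setminus\{\wm\}$; since each non-critical vertex along the path lies in exactly one edge of $A$, the up move is forced the moment we know $\gs_i$, and the only freedom is in the down moves. So the real content is: at each $\gs_{2s-1}$ of the form appearing in the proof of Proposition~\ref{gradient path}, the element we are allowed to delete is uniquely determined.

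**Key steps.** First I would record the ``shape lemma'': any vertex $\gs_i$ occurring strictly inside a gradient path from the critical set $\gs\setminus\{\wm_\gs\}$ to the critical set $\gt$ has its support (in the sense of \eqref{lcm of sigma}, i.e.\ the diagram of the would-be admissible pair) squeezed between that of $(\wF_\gt,\wm_\gt)$ and that of $(\wF_\gs,\wm_\gs)$; this follows from Lemma~\ref{lcm lemma} applied in both directions along the path together with the fact that $\lcm(\gs_i)$ can only lose one prime power at a time. Second, I would show that an up move $\gs_i\to\gs_i\cup\{\wn_{\gs_i}\}$ is completely determined: $u(\gs_i)$ and hence $\wn_{\gs_i}$ are defined purely in terms of $\gs_i$ and the fixed global data, so there is never a choice of which element to adjoin, only whether an up move is possible at all — and when $\gs_i$ is not critical and not the tail of a matching edge pointing down, the graph $G^A_X$ forces the up move. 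Third, for the down moves, I would argue that from a vertex of the form $\gs_{2s-1}=\{\wm_{[q,1]},\dots,\wm_{[q,s]},\wm_{[s]},\dots,\wm_{[q]}\}$ (or its analogues in Step~2) the only element whose removal keeps us on a path that can still reach the critical set $\gt$ — equivalently, the only removal that does not make the resulting set critical or dead-ended — is $\wm_{[s]}$ (resp.\ $\wm_{[q]}$ at the transition points). This is where one uses that $x_{i_s,j_s}$ divides $\wm_{[q,s]}$ but not $\lcm$ of the rest, so removing anything else either strands a prime power that can never be reintroduced (violating the support-squeeze lemma) or reaches a critical set of the wrong size prematurely. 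Finally, by induction on the path length, matching the forced moves against the construction in Proposition~\ref{gradient path}, I conclude the path is the one exhibited there.

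**Main obstacle.** The delicate part will be Step~3: ruling out ``detours.'' A priori a gradient path could wander — delete some $\wm_{[t]}$ with $t\neq s$, take an up move, and only later correct itself. I expect the cleanest way to exclude this is a monovariant argument: assign to each vertex $\gs_i$ the pair consisting of its support (a monomial, ordered by divisibility, forced to decrease weakly by the squeeze lemma) and, as a tie-breaker among sets with the same support, some lexicographic statistic on which $\wm_{[\cdot]}$ versus $\wm_{[q,\cdot]}$ elements are present; one then checks that along $G^A_X$ this pair is strictly monotone, so no vertex can be revisited and the sequence of forced up moves pins down the unique path. The complication with $g(-)$ — the coincidences $\wm_{[s]}=\wm_{[q,s]}$ etc.\ handled by ``skipping loops'' in Proposition~\ref{gradient path} — will need to be carried along here too, but it only collapses steps of the canonical path and does not create alternatives, so it does not affect uniqueness.
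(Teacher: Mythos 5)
Your skeleton is the right one and is essentially the paper's: the path must alternate between sets of sizes $q$ and $q+1$, the element adjoined in an ``up'' move is uniquely determined because $A$ is a matching, and Lemma~\ref{lcm lemma} (the lcm only decreases along a path, since $\wn_{\gs'}$ always divides $\lcm(\gs')$) kills every wrong deletion that destroys a variable dividing $\lcm(\gt)$. Two caveats, one minor and one fatal. Minor: $G_X^A$ does \emph{not} force the up move --- from a non-critical size-$q$ vertex all non-matched codimension-one faces are still outgoing edges; you must argue separately that dropping to size $q-1$ is a dead end (after such a step the path can only oscillate between sizes $q-1$ and $q$, never again reaching size $q+1$, hence never reaching $\gt$). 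Fatal: your dichotomy ``removing anything else either strands a variable that can never be reintroduced or reaches a critical set of the wrong size'' is not true, and the exceptions are precisely the hard cases. Deleting $\wm_{[r]}=(\wm_\gs)_{\<i_r\>}=\wm_\gt$ itself from an intermediate size-$(q+1)$ set, and (when $r<q$) deleting $\wm_{[s]}$ or $\wm_{[r,s+1]}$ at the stages $\gs_{2(lr+t)-1}$, need not remove any variable of $\lcm(\gt)$ from the lcm, so the squeeze argument is silent; and ``hitting a critical set of the wrong size'' is not an obstruction at all, since a critical vertex still has all its down-edges in $G_X^A$ (nor do these deletions generally produce critical sets).

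Your proposed fix --- a monovariant showing no vertex is revisited --- cannot close this gap: non-revisiting is already automatic from acyclicity of $A$, and it does nothing to single out the canonical path among other injective paths; uniqueness fails exactly if one of the exceptional deletions above could be continued to $\gt$. What is actually needed, and what the paper supplies, is structural control of which elements can ever be \emph{adjoined} later: every adjoined element is $\wn_{\gs'}$, the $\prec_{\gs'}$-least divisor of a sub-lcm, and by Lemma~\ref{element of gs_k} every element occurring on the path has the form of an iterated $(\,\cdot\,)_{\<c\>}$ of $\wm_\gs$. From this one shows (Lemma~\ref{path lemma}) that once $(\wm_\gs)_{\<i_r\>}$ is discarded it can never be re-adjoined, so it may never be deleted; and a parallel argument (the analysis culminating in \eqref{impossible}, using that the matching only adjoins $(\wm_{\gs'})_{\<i_s\>}$ when no $\prec_{\gs'}$-smaller divisor is available) shows that after wrongly deleting $\wm_{[s]}$ or $\wm_{[r,s+1]}$ the element $\wm_{[r,s]}\in\gt$ can never appear, so $\gt$ is unreachable. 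Without an argument of this kind --- which is where the real work of Proposition~\ref{uniqueness} lies --- the proof is incomplete.
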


For the proof of the proposition, the next lemmas are useful. 

\begin{lem}\label{element of gs_k}
Let $\gs \subset G(I)$ be as in Proposition~\ref{gradient path}, and 
$(\gs \setminus \{\wm_\gs\}) =\gs'_0 \to \gs'_1 \to \cdots \to \gs'_k \to \cdots$ be a gradient path.  
Then, for any $\wm \in \gs'_k$, there is  a subset $\emptyset \ne 
\{ \, c_1, \ldots, c_s \, \} \subset \{ \, i_1, \ldots, i_q \, \}$ such that 
\begin{equation}\label{repeated}
\wm= (\,(\cdots (\, (\wm_{\gs})_{\<c_1\>})_{\<c_2\>} \cdots )_{\<c_{s-1}\>})_{\<c_s\>}.
\end{equation}
 \end{lem}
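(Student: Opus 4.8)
The plan is to argue by induction on $k$. For $k=0$ we have $\gs'_0=\gs\setminus\{\wm_\gs\}=\{\,(\wm_\gs)_{\<i_r\>}\mid 1\le r\le q\,\}$ by \eqref{shape of sigma}, and each of these monomials is of the required form \eqref{repeated} with $s=1$ and $c_1=i_r$. For the inductive step, an edge $\gs'_k\to\gs'_{k+1}$ of $G_X^A$ is either a forward edge, in which case $\gs'_{k+1}=\gs'_k\setminus\{\wm\}$ for some $\wm$, every element of $\gs'_{k+1}$ already lies in $\gs'_k$, and the inductive hypothesis applies directly; or it is the reversal of a matching edge, in which case $\gs'_{k+1}=\gs'_k\cup\{\wn\}$ with $\wn:=\wn_{\gs'_k}$, and the only new element to control is $\wn$.

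So suppose $\gs'_{k+1}=\gs'_k\cup\{\wn\}$. I would first record two properties of $\wn$. \emph{(i) $\wn$ divides $x(\wF_\gs,\wm_\gs)$:} indeed $\wn\mid\lcm(\gs'_k)$, and by Lemma~\ref{lcm lemma} applied along $\gs'_0\to\cdots\to\gs'_k$ one has $\lcm(\gs'_k)\mid\lcm(\gs'_0)=\lcm(\gs\setminus\{\wm_\gs\})\mid\lcm(\gs)=x(\wF_\gs,\wm_\gs)$, the last equality being \eqref{lcm of sigma}. \emph{(ii) $\wn\ne\wm_\gs$:} by the inductive hypothesis every element of $\gs'_k$ is of the form \eqref{repeated}, and each operation $(-)_{\<c\>}$ strictly raises the corresponding monomial of $S$ in the lexicographic order $\succ$ (by Lemma~\ref{m_i_r}, $(\m)_{\<c\>}$ and $\fb_c(\m)$ have equal exponents in $x_l$ for $l\le c$ while $\fb_c(\m)$ has one more $x_c$ than $\m$); hence every element of $\gs'_k$ — in particular its $\sqsubset$-largest element $\wm_{\gs'_k}$, whose underlying monomial is therefore $\succ\m_\gs$ — is $\sqsubset$-strictly below $\wm_\gs$, and the same fact forces $\wm_\gs\notin N_{\gs'_k}$ and $\wm_{\gs'_k}\notin N_{\gs'_k}$, so that $\prec_{\gs'_k}$ coincides with $\sqsubset$ on $\{\wm_{\gs'_k},\wm_\gs\}$; combined with $\wn\prec_{\gs'_k}\wm_{\gs'_k}$, which follows from the definitions of $u(\gs'_k)$ and $\wn_{\gs'_k}$, this yields $\wn\prec_{\gs'_k}\wm_\gs$.

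The inductive step is thus reduced to the following claim, to be proved on its own: \emph{any minimal generator $\wm'=\BoX(\m')$ of $\wI$ with $\wm'\ne\wm_\gs$ and $\wm'\mid x(\wF_\gs,\wm_\gs)$ is of the form \eqref{repeated} for a nonempty subset $\{c_1,\dots,c_s\}\subset\{i_1,\dots,i_q\}$.} In the diagram language of \S4, writing $\m_\gs=\prod_l x_l^{a_l}$ with slots $\alpha_1\le\cdots\le\alpha_e$, the divisibility forces, in every column $j\le\deg\m'$, the black square $(\beta_j,j)$ of $\BoX(\m')$ to lie among the squares of $x(\wF_\gs,\wm_\gs)$ in that column; since every white square $(i_r,j_r)\in\wF_\gs$ sits strictly above the black square of column $j_r$, this means $\beta_j\le\alpha_j$, with equality unless $\beta_j=i_r$ for some $r$ (necessarily with $j_r=j$, and then $r$ determined by $\beta_j$). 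I would then induct on the number of columns in which $\beta_j<\alpha_j$ (the \emph{slid} columns): picking the largest slid column $j$, with $\beta_j=i_r$, one checks that $\m'=(\m'')_{\<i_r\>}$ for a minimal generator $\m''$ obtained by ``undoing'' that one slide — so that $\BoX(\m'')$ again divides $x(\wF_\gs,\wm_\gs)$, has one fewer slid column, and satisfies $i_r<\nu(\m'')$ — and then the inductive hypothesis writes $\m''$ as an iterate $(\cdots(\m_\gs)_{\<c_1\>}\cdots)_{\<c_{s-1}\>}$ with the $c_t$ distinct, belonging to $\{i_1,\dots,i_q\}$ and different from $i_r$; Lemma~\ref{lem for adm}(iv),(v) then reassembles these into a single expression of the form \eqref{repeated}. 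Applying the claim to $\wn$ closes the induction on $k$.

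The step I expect to be the main obstacle is precisely this last claim. When $I$ is generated in one degree it is the combinatorial content of Proposition~\ref{rmv 2} (the order ideal of $A_I$ below $(\wF_\gs,\wm_\gs)$ being a product of simplices). For a general Borel fixed ideal the operation $\m\mapsto(\m)_{\<i\>}=\BoX(g(\fb_i(\m)))$ involves the decomposition function $g$, which truncates a suffix of columns and is \emph{not} a regular decomposition function in the sense of Herzog--Takayama (cf.\ Remark~\ref{regular decomp}); so ``undoing a slide'' must be carried out carefully, verifying in particular that the reconstructed generator $\m''$ really does satisfy $g(\fb_{i_r}(\m''))=\m'$ and that un-sliding the largest slid column creates no new slid columns — here one uses Lemma~\ref{m_i_r} to control the columns up to index $i_r$ and the admissibility of $(\wF_\gs,\wm_\gs)$ to keep the reconstructed data admissible.
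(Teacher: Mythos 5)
Your argument is correct and is essentially the paper's: both rest on Lemma~\ref{lcm lemma} to see that every element of $\gs'_k$ divides $\lcm(\gs)=x(\wF_\gs,\wm_\gs)$, on the combinatorial fact that any minimal generator other than $\wm_\gs$ dividing this support has the form \eqref{repeated} (which the paper asserts with ``by the properties of the diagrams''), and on an induction on $k$ excluding $\wm_\gs$ via the observation that $\wm_\gs$ is $\sqsupset$-larger than every monomial of the form \eqref{repeated} (which the paper leaves as an exercise). The un-sliding step you flag does go through: letting $\m''$ be $g$ of the monomial obtained from $\m'$ by restoring $x_{\alpha_j}$ in the largest slid column $j$ and appending the remaining black squares of $\m_\gs$, Borel-fixedness (via the exchange $x_{\alpha_j}\mapsto x_{i_r}$) shows that no prefix of degree less than $\deg\m'$ lies in $I$, whence $\m''\in G(I)$, $\BoX(\m'')$ divides $x(\wF_\gs,\wm_\gs)$ with one fewer slid column, and $g(\fb_{i_r}(\m''))=\m'$.
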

 
 \begin{proof}
Recall that $\lcm (\gs)$ coincides with the ``support" of the diagram of $(\wF_\gs, \wm_\gs)$, and 
$\lcm(\gs'_k)$ divides $\lcm(\gs)$. Hence $\wm \in \gs'_k$ divides the support of the diagram of $(\wF_\gs, \wm_\gs)$.  
By the properties of the diagrams, we see that any $\wm \in \gs'_k$ is either 
$\wm_\gs$ itself or of the form \eqref{repeated}. 
Finally, we  prove that $\wm \ne \wm_\gs$ by induction on $k$ using the fact 
that $\wm_\gs$ is larger than any  element of the form \eqref{repeated} with respect to the order $\sqsupset$.  
We leave the details to the reader as an easy exercise. 
\end{proof}

\begin{lem}\label{path lemma}
Let $\gs, \gt \subset G(I)$ be as in Proposition~\ref{gradient path},  
$(\gs \setminus \{\wm_\gs\}) =\gs'_0 \to \gs'_1 \to \cdots \to \gs'_{l'}= \gt$ any gradient path, 
and $r \in B(\wF_\gs, \wm_\gs)$ the one given in the proposition. 
Then $(\wm_\gs)_{\<i_r\>} \in \gs'_k$ for all $k$. 
\end{lem}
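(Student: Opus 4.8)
The plan is to argue by contradiction: suppose some gradient path $(\gs \setminus \{\wm_\gs\}) = \gs'_0 \to \gs'_1 \to \cdots \to \gs'_{l'} = \gt$ has $(\wm_\gs)_{\<i_r\>} \notin \gs'_k$ for some $k$, and let $k_0$ be the smallest such index. Since $(\wm_\gs)_{\<i_r\>} = \wm_{[r]} = \wm_\gt \in \gt = \gs'_{l'}$ and $\wm_{[r]} \in \gs'_0$ (it lies in $\gs \setminus \{\wm_\gs\}$ by \eqref{shape of sigma}), we have $0 < k_0 < l'$; moreover the step $\gs'_{k_0-1} \to \gs'_{k_0}$ must be a deletion removing $\wm_{[r]}$, so $\gs'_{k_0} = \gs'_{k_0-1} \setminus \{\wm_{[r]}\}$, $\#\gs'_{k_0} = \#\gs'_{k_0-1} - 1 = q-1$, and this is the matched (reversed) half of an edge of $A$: that is, $\wm_{[r]} = \wn_{\gs'_{k_0-1}}$. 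Similarly, since $\wm_{[r]}$ must reappear before reaching $\gt$, there is a later index at which it is added back.

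The key is to analyze the one step $\gs'_{k_0-1} \to \gs'_{k_0}$ that deletes $\wm_{[r]}$, using Lemma~\ref{element of gs_k}: every element of $\gs'_{k_0-1}$ is either $\wm_\gs$ (impossible, since $\wm_\gs$ was deleted at step $0$ and, by Lemma~\ref{lcm lemma}, once $\wm_\gs \notin \gs'_j$ its reappearance would force $x$'s not in $\lcm(\gs'_j)$ — actually $\wm_\gs \notin \gs'_j$ for all $j \ge 1$ because $\wm_\gs$ is $\sqsubset$-largest and any $\wn_{\gs'_j}$ divides $\lcm(\gs'_j) \mid \lcm(\gs)$ forces $\wn_{\gs'_j} \ne \wm_\gs$) or of the form \eqref{repeated}. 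I would then show that $\wm_{[r]}$ cannot be the value $\wn_{\gs'_{k_0-1}}$: by definition $\wn_{\gs'_{k_0-1}}$ is the $\prec_{\gs'_{k_0-1}}$-smallest monomial dividing $\lcm((\gs'_{k_0-1})_u)$ where $u = u(\gs'_{k_0-1})$. Using $r \in B(\wF_\gs, \wm_\gs)$, Lemma~\ref{lem for adm}(iv) gives $\wm_{[r,s]} = \wm_{[s,r]}$ for $s > r$ and Lemma~\ref{m_i_r} gives $\nu(\wm_{[r]}) \ge i_q$, so $x_{i_r,j_r}$ divides $\wm_{[r]}$ but $x_{i_s,j_s}$ with $s > r$ still divides $\wm_{[r,s]}$; tracking the diagram (i.e.\ the supports, via \eqref{lcm of sigma} and \eqref{diagram recovers}) shows that at the moment $\wm_{[r]}$ is $\sqsubset$-largest among the $q$-element sets in the path, the white square $(i_r, j_r)$ is still present in the support of every element lexicographically below it, so $\wm_{[r]}$ is never the $\prec$-minimal divisor of the relevant lcm — contradicting $\wm_{[r]} = \wn_{\gs'_{k_0-1}}$.

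The main obstacle I anticipate is the bookkeeping: making precise, in terms of the diagrams of \S4 and the orders $\prec_{\gs'_j}$, the claim that whenever $\wm_{[r]}$ is $\sqsubset$-largest in a stage $\gs'_j$ it cannot be selected as $\wn_{\gs'_j}$, and ruling out the alternative scenario in which $\wm_{[r]}$ is deleted while it is \emph{not} $\sqsubset$-largest — in that case the deletion step is a matched edge $\gs'_{k_0-1} = \gs'_{k_0} \cup \{\wn_{\gs'_{k_0}}\}$ read backwards, so $\wm_{[r]} = \wn_{\gs'_{k_0}}$; but $\wn_{\gs'_{k_0}} \in \gs'_{k_0}$ would be needed for it to have been deletable, a contradiction with $\wn_{\gs'_{k_0}} \notin \gs'_{k_0}$ which is required for $\gs'_{k_0} \cup \{\wn_{\gs'_{k_0}}\} \to \gs'_{k_0}$ to lie in $A$. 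So that subcase is quick; the substance is the $\sqsubset$-largest subcase above. I would close by noting that since $\wm_{[r]}$ is never deleted, it persists: $(\wm_\gs)_{\<i_r\>} \in \gs'_k$ for all $k$, as claimed.
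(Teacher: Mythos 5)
The central step of your argument rests on a misreading of the Morse graph $G_X^A$, and the contradiction you aim for does not exist. In $G_X^A$ the \emph{reversed} edges of $A$ are exactly the insertion steps $\gs'_k \to \gs'_k \cup \{\wn_{\gs'_k}\}$; every deletion step occurring in a gradient path is an ordinary edge of $E_X \setminus A$. So the step $\gs'_{k_0-1} \to \gs'_{k_0}$ removing $(\wm_\gs)_{\<i_r\>}$ is never ``the matched (reversed) half of an edge of $A$,'' and there is no identity $\wm_{[r]} = \wn_{\gs'_{k_0-1}}$ to contradict: the only constraint on a deletion is that the edge itself not lie in $A$, i.e.\ that the deleted element differ from $\wn$ of the \emph{target} set $\gs'_{k_0}$ (when $u(\gs'_{k_0}) \ne -\infty$), and generically many deletions are permitted. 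Hence nothing in the matching forbids removing $(\wm_\gs)_{\<i_r\>}$ at a down-step, and your diagram/$\prec$-minimality analysis is refuting a condition the path was never required to satisfy. (Two further slips of the same kind: along such a path the cardinalities stay in $\{q,q+1\}$, so a deletion of $(\wm_\gs)_{\<i_r\>}$ would go from a $(q+1)$-set to a $q$-set, not from $q$ to $q-1$ as you write; and in your ``alternative scenario'' an edge of $A$ is simply not traversable downwards in $G_X^A$, so that case analysis is vacuous rather than quick.)

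The step that \emph{is} constrained is re-insertion, and that is where the argument must be made (this is the paper's proof): show that $(\wm_\gs)_{\<i_r\>} \notin \gs'_k$ implies $(\wm_\gs)_{\<i_r\>} \notin \gs'_{k+1}$, so that once absent it can never return, contradicting $(\wm_\gs)_{\<i_r\>} \in \gt$. Concretely, the only element that can be added at $\gs'_k$ is $\wn_{\gs'_k}$, the $\prec_{\gs'_k}$-minimal generator dividing the relevant lcm. By Lemma~\ref{element of gs_k} every element of $\gs'_k$ is an iterated $(-)_{\<c\>}$ of $\wm_\gs$ with indices among $i_1,\dots,i_q$, and since $r \in B(\wF_\gs,\wm_\gs)$ one checks that $(\wm_\gs)_{\<i_r\>}$ is not of the form $\wn_{\<c\>}$ for any $\wn \in \gs'_k$; hence $(\wm_\gs)_{\<i_r\>} \notin N_{\gs'_k}$, so it sits in the low-priority part of $\prec_{\gs'_k}$. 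On the other hand, if $(\wm_\gs)_{\<i_r\>}$ divided the lcm in question, some element of $\gs'_k$ would be divisible by $x_{i_r,j_r}$, and any such element is strictly $\sqsubset$-smaller than $(\wm_\gs)_{\<i_r\>}$, hence $\prec_{\gs'_k}$-earlier. Either way $(\wm_\gs)_{\<i_r\>}$ is never the $\prec_{\gs'_k}$-minimum, so it is never inserted. Your sketched minimality computation is essentially this one, but as written it is attached to the deletion step, where it proves nothing; redirected to the insertion steps it closes the proof.
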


\begin{proof}
We use contradiction. Since $(\wm_\gs)_{\<i_r\>} \in \gt$, it suffices to show that $(\wm_\gs)_{\<i_r\>} \not \in \gs'_k$ implies 
$(\wm_\gs)_{\<i_r\>} \not \in \gs'_{k+1}$.  Assume that $(\wm_\gs)_{\<i_r\>} \not \in \gs'_k$. 
Since $r \in B(\wF_\gs, \wm_\gs)$,  we have $\wn_{\< c \>} \ne (\wm_{\gs})_{\<i_r\>}$ for all 
$\wn \in \gs_k'$ and all $c$ by Lemma~\ref{element of gs_k}. Similarly, if $x_{i_r, j_r}$ divides 
some $\wn \in \gs_k'$ (note that $x_{i_r, j_r}| (\wm_\gs)_{\<i_r\>}$), then $(\wm_\gs)_{\<i_r\>} \sqsupset \wn$. 
Hence the expected assertion follows from the construction of the acyclic matching $A$. 
\end{proof}

\begingroup
\renewcommand{\proofname}{The proof of Proposition~\ref{uniqueness}.}
\begin{proof}
Let $\cP:  (\gs \setminus \{ \wm_\gs \}) =\gs_0 \to \gs_1 \to  \cdots \to \gs_l =\gt$ be the path  
constructed in the proof of Proposition~\ref{gradient path}.  
We will show that if we once leave from $\cP$ then we can not arrive $\gt$ anymore. 
Since $G_X^A$ is acyclic, there is no (non-trivial) path from $\gt$ to $\gt$ itself. 
Hence it completes the proof of the uniqueness. 
Recall that $\# \gs_{2s-1} = \#\gs_{2s}+1=q+1$ for all $s$. 
If we take an arrow $\gs_{2s} \to \gs'$ with $\gs' \ne \gs_{2s-1}$, then we have $\# \gs' =q-1$, and  
there is no gradient path from $\gs'$ to $\gt$. 
Hence it suffices to consider the step $\gs_{2s-1} \to \gs'$. 
In this case, $\gs' = \gs_{2s-1} \setminus \{ \wm \}$ for some $\wm \in \gs_{2s-1}$. 
We follow the framework of the proof of Proposition~\ref{gradient path}
and also use the notation $\wm_{[s]} := (\wm_\gs)_{\<i_s \>}$ and
$\wm_{[s,t]} := ((\wm_\gs)_{\< i_s \>})_{\<i_t \>}$.

\smallskip

\noindent{\it Case~1.} First consider the case $r=q$. For the simplicity, we assume that $\wm_{[q,s]} \ne \wm_{[s]}$ 
for all $s < q$. The following argument also works without this assumption after minor modification. 

Recall that 
$$\gs_{2s-1}=\{\, \wm_{[q,1]}, \wm_{[q,2]}, \ldots, \wm_{[q,s]}, \wm_{[s]}, \wm_{[s+1]}, \ldots, \wm_{[q]} \, \}$$
and $\gs_{2s} =\gs_{2s-1} \setminus \{  \wm_{[s]} \}$ for each $1 \le s < q$. 
Since $\gs_{2s-1}=\gs_{2s-2} \cup \{ \wm_{[q,s]} \}$, we can not remove $\wm_{[q,s]}$ from $\gs_{2s-1}$. 
Hence it suffices  to show that there is no gradient path from $\gs':=\gs_{2s-1} \setminus \{  \wm  \}$ to $\gt$ 
for all $\wm \in \gs_{2s-1} \setminus \{ \, \wm_{[s]}, \wm_{[q,s]} \, \}$. 
If $\wm=\wm_{[q,t]}$ for some $1 \le t <s$, then $x_{i_t, j_t} \nmid\lcm(\gs')$, and hence 
$\lcm(\gt)  \nmid \lcm(\gs')$. By Lemma~\ref{lcm lemma},  there is no path from $\gs'$ to $\gt$. 
If $\wm=\wm_{[t]}$ for $t >s$, the same argument also works. 

\smallskip

\noindent{\it Case 2, Step 1.} Next consider the case  $r <q$.  
For the simplicity, we assume that $\wm_{[q,t]} \ne \wm_{[t]}$ and 
$\wm_{[s, t]} \ne \wm_{[s+1,t]}$ for all  $t \le r < s$.   
The following argument also works without this assumption after minor modification. 

Recall that the first $(2r-1)$-steps $\gs_0 \to \cdots \to \gs_{2r-1}$ of $\cP$ coincide with those in Case~1. 
In this part of the path, the proof in Case~1 also works. So we consider the next step $\gs_{2r-1} \to\gs'$. 
Note that 
$$\gs_{2r-1}= \{ \, \wm_{[q,1]}, \wm_{[q,2]}, \ldots , \wm_{[q,r]}, \wm_{[r]}, \wm_{[r+1]}, \ldots, \wm_{[q]}\, \}$$
and $\gs_{2r}=\gs_{2r-1} \setminus \{  \wm_{[q]}  \}$. 
We can not remove $\wm_{[q,r]}$ from $\gs_{2r-1}$, since it is the new comer of this set. 
If $\gs' = \gs \setminus \{  \wm  \}$ with $\wm=\wm_{[q,s]}$ for some $s <r$ or  $\wm=\wm_{[s]}$ for some $s >r$, 
then $x_{i_s, j_s}  \nmid \lcm(\gs')$, and hence $\lcm(\gt)  \nmid \lcm(\gs')$. 
So we must have $\gs' = \gs_{2r-1} \setminus \{ \wm_{[r]}  \}$, but this can not happen by Lemma~\ref{path lemma}. 

A similar argument works for the step $\gs_{2lr-1} \to \gs_{2lr}$ for $2 \le l \le q-r$. 
Note that 
$$\gs_{2lr-1}= \{ \, \wm_{[s,1]}, \wm_{[s,2]}, \ldots , \wm_{[s,r]}, \wm_{[r]}, \wm_{[r+1]}, \ldots, \wm_{[s]}, 
\wm_{[r,s+1]}, \wm_{[r,s+2]}, \ldots, \wm_{[r,q]} \, \}$$
and $\gs_{2lr}=\gs \setminus \{ \wm_{[s]}  \}$, where $s=q-l+1$.
Since  $\wm_{[s,r]}$ is the new comer, we can not remove it.  
If $\gs'= \gs_{2lr-1} \setminus \{  \wm_{[s,t]}  \}$ for $1 \le t <r$, there is no gradient path from 
$\gs'$ to $\gt$. In fact, the variable $x_{i_t, j_t}$ matters. 
The same is true for $\wm_{[t]}$ for $r < t \le s$ and $\wm_{[r, t]}$ for $s < t \le q$
(since $r \in B(\wF,\wm)$, $x_{i_t,j_t}$ divides $\wm_{[r, t]}$).  
Hence we may assume that  $\gs' = \gs_{2lr-1} \setminus \{  \wm_{[r]} \}$, but the assertion follows from Lemma~\ref{path lemma} in this case.

The remaining case of this step is $\gs_{2(lr+t)-1} \to \gs'$  for $2 \le l < q-r$ and $1 \le t <r$. 
Note that 
$$\gs_{2(lr+t)-1} = \{ \, \wm_{[s,u]} \mid 1 \le u \le t \, \} \cup\{ \, \wm_{[s+1,u]} \mid t \le u < r \, \}$$
$$\qquad \qquad \qquad \qquad \qquad \cup \{ \, \wm_{[u]} \mid r \le u \le s \, \} \cup  \{ \, \wm_{[r,u]} \mid s+1 \le u \le q \, \},$$
$\gs_{2(lr+t)} = \gs_{2(lr+t)-1}  \setminus \{  \wm_{[s+1,t]} \}$, and $\wm_{[s,t]}$ is the new comer of $\gs_{2(lr+t)-1}$. 
For $\wm_{[r]}$, we can use Lemma~\ref{path lemma} again. 
For the other elements except $\wm_{[s]}$ and $\wm_{[r,s+1]}$, the variable $x_{i_u,j_u}$ matters ($u \ne r, s, s+1$ in this case). 

To see that there is no gradient path from $\gs'_0 := \gs_{2(lr+t)-1} \setminus \{  \wm_{[s]} \}$ to $\gt$, we will show that 
if there is a gradient path $\gs'_0 \to \cdots \to \gs'_k$, then $\wm_{[r,s]} \not \in \gs'_k$. In fact, we can prove more:  
$\gs_k'$ does not contain any $\wn \in G(\wI)$ such that $\wn \sqsupseteq \wm_{[r,s]}$ and $x_{i_s, j_s} \, | \, \wn$. 
For the contradiction, assume that $\gs'_k$ contains such $\wn$ and $\gs'_{k-1}$ does not.  
Then we must have 
\begin{equation}\label{impossible}
\gs_k'= \gs'_{k-1} \cup \{ \wn\},  \ \ \ \  (\gs_k'\to  \gs'_{k-1}) \in A \ \ \ \  \text{and}  \ \ \ \   
\wn = (\wm_{\gs'_{k-1}})_{\<i_s\>}.
\end{equation} 
Since $x_{i_s, j_s} \, | \, \lcm (\gs'_{k-1})$, there is some $\wn' \in \gs'_{k-1}$ with $x_{i_s, j_s} \, | \, \wn'$. 
By the assumption, we have $\wn' \sqsubset \wm_{[r,s]}$ and there is some $u <r \, (<s)$ with $x_{i_u, j_u} \, | \, \wn'$. 
Since $(\wm_{\gs'_{k-1}})_{\<i_u\>} \ne \wn'$ (to see this, consider the variable $x_{i_s,j_s}$), 
the situation \eqref{impossible} can not happen by the construction of the matching $A$. 

We can prove that there is no gradient path from $\gs_{2(lr+t)-1} \setminus \{  \wm_{[r,s+1]} \}$ to $\gt$
by a similar argument.  

\smallskip

\noindent{\it Case 2, Step 2.} Now we connect $\gs_{2(q-r)}$ with $\gt$. Since the construction is quite similar to Case~1, 
we can prove the assertion by a similar way to that case.  
\end{proof}
\endgroup

Applying the construction of Batzies and Welker (see \cite[Remark~4.4]{BW} for its most explicit form), 
we get the following resolution $\wQ_\bullet$ of $\wI=\BoX(I)$. 
For a critical cell $\gs \subset G(\wI)$, 
$e(\gs)$ denotes a basis element with degree $\deg(\lcm(\gs)) \in \ZZ^{n \times d}$.   
Set 
$$\wQ_q = \bigoplus_{\substack{\text{$\gs:$ critical}\\ \#\gs=q+1}}\wS \, e(\gs) \qquad (q \ge 0).$$
For a critical cell $\gs$ of the form \eqref{shape of sigma},  the differential map sends $e(\gs)$ to  
\begin{eqnarray}\label{cell diff}
\sum_{r=1}^q &(-1)^r& x_{i_r, j_r} \cdot e(\gs \setminus \{ (\wm_\gs)_{\< i_r \>} \})\\ 
&-& (-1)^q \sum_{\substack{\text{$\gt:$ critical, $\# \gt=\#\gs-1$} \\ \text{$\exists$ grad.\,path $\cP: (\gs \setminus \{ \wm_\gs \})
 \leadsto \gt$}}}m(\cP) \cdot 
\frac{\lcm(\gs)}{\lcm(\gt)} \cdot e(\gt) \nonumber 
\end{eqnarray}
(Recall that a critical cell is characterized as a cell of the form \eqref{shape of sigma}.
Clearly $\gs \setminus \{ (\wm_\gs)_{\< i_r \>} \}$ is of such form).
The definition of the integer coefficient $m(\cP)$ can be found in the proof of Theorem~\ref{BW=wP} below (the original definition is in 
\cite[p.166]{BW}). 


\begin{thm}\label{BW=wP}
Our description of the resolution $\wP_\bullet$ coincides with the Batzies-Welker type resolution $\wQ_\bullet$ 
(more precisely, the truncation $\wP_{\ge 1}$ of $\wP_\bullet$ coincides with $\wQ_\bullet$).
\end{thm}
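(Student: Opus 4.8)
The plan is to upgrade the one-to-one correspondence between critical cells and admissible pairs recorded around \eqref{shape of sigma} to an isomorphism of complexes $\Phi\colon\wP_{\ge 1}\xrightarrow{\ \sim\ }\wQ_\bullet$, and then to check that $\Phi$ intertwines the two differentials basis element by basis element. First I would define $\Phi$ by $e(\wF_\gs,\wm_\gs)\mapsto e(\gs)$, where $(\wF_\gs,\wm_\gs)$ is the admissible pair attached to the critical cell $\gs$; this is a bijection between the bases of $\wP_{q+1}$ and of $\wQ_q$, and by \eqref{lcm of sigma} the multidegree $\deg\lcm(\gs)$ of $e(\gs)$ coincides with the prescribed multidegree of $e(\wF_\gs,\wm_\gs)$, so $\Phi$ is a $\ZZ^{n\times d}$-graded isomorphism of the underlying free modules; in homological degree $0$ it matches $e(\emptyset,\wm)$ with the critical singleton $e(\{\wm\})$, and both augmentations send it to $\wm\in\wI$.

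Next I would compare differentials by splitting \eqref{cell diff}. The first sum is the ``$\gd$-part'': for $1\le r\le q$ the cell $\gs\setminus\{(\wm_\gs)_{\<i_r\>}\}$ is again of the form \eqref{shape of sigma} with the same largest element $\wm_\gs$ (since $(\m_\gs)_{\<i_s\>}\succ\m_\gs$ by Lemma~\ref{m_i_r}, $\wm_\gs$ stays $\sqsubset$-largest), and the recipe \eqref{shape of sigma} identifies it with $((\wF_\gs)_r,\wm_\gs)$; hence the first sum is exactly $\Phi(\gd_{q+1}(e(\wF_\gs,\wm_\gs)))$. For the second sum, Propositions~\ref{gradient path} and~\ref{uniqueness} tell us that the critical cells reachable from $\gs\setminus\{\wm_\gs\}$ by a gradient path are precisely the cells $\gt_r$ corresponding to $((\wF_\gs)_r,(\wm_\gs)_{\<i_r\>})$ for $r\in B(\wF_\gs,\wm_\gs)$, each by a unique path $\cP_r$; applying \eqref{lcm of sigma} to $\gs$ and $\gt_r$ and using that $(\wm_\gs)_{\<i_r\>}$ divides $x_{i_r,j_r}\wm_\gs$ (noted after Lemma~\ref{rem for adm}), a direct comparison of supports gives $\lcm(\gs)/\lcm(\gt_r)=x_{i_r,j_r}\wm_\gs/(\wm_\gs)_{\<i_r\>}$, which is exactly the monomial coefficient of $e((\wF_\gs)_r,(\wm_\gs)_{\<i_r\>})$ appearing in $\gd'_{q+1}(e(\wF_\gs,\wm_\gs))$. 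Thus the theorem reduces to the single sign identity
\[
m(\cP_r)=(-1)^{\,q-r}\qquad\text{for all }r\in B(\wF_\gs,\wm_\gs).
\]

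This identity is the heart of the matter. Here I would unwind the Batzies--Welker coefficient $m(\cP_r)$ (\cite[p.166]{BW}): up to a global orientation sign it is $(-1)^{l}$ times the product over the edges of the gradient path $\cP_r\colon\gs_0\to\gs_1\to\cdots\to\gs_{2l}=\gt_r$ of the simplicial incidence numbers $[\gs_{2i+1}:\gs_{2i}]$ and $[\gs_{2i+1}:\gs_{2i+2}]$ (the odd arrows being the reversed matching edges), where $[\tau:\tau\setminus\{\wm\}]=(-1)^{p}$ with $p$ the position of $\wm$ in $\tau$ relative to the fixed order $\sqsubset$. I would then run through the explicit path produced in the proof of Proposition~\ref{gradient path}, zig-zag by zig-zag: in Case~1 ($r=q$) the $s$-th zig-zag replaces the monomial $\wm_{[s]}$ by $\wm_{[q,s]}$ while the larger block $\wm_{[s+1]},\dots,\wm_{[q]}$ stays above, and in Case~2 ($r<q$) the analogous doubly-indexed path replaces $\wm_{[s+1,t]}$ by $\wm_{[s,t]}$, using $\wm_{[s,r]}=\wm_{[r,s]}$ for $s>r$ (Lemma~\ref{lem for adm}(iv)) and the observation that the ``loop'' steps silently skipped in the construction contribute nothing. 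Since in each such swap the two monomials involved agree on an initial segment of the variables (Lemma~\ref{m_i_r}), the change of $\sqsubset$-positions is controlled, and the resulting incidence contributions telescope to $(-1)^{q-r}$; the small cases $q\le 2$ (for $q=1$ the path is trivial and $m(\cP_1)=1$) verify the formula and pin the global sign. Finally $-(-1)^q m(\cP_r)=-(-1)^q(-1)^{q-r}=-(-1)^r$, matching the coefficient of $e((\wF_\gs)_r,(\wm_\gs)_{\<i_r\>})$ in $-\gd'_{q+1}(e(\wF_\gs,\wm_\gs))$.

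Assembling the three parts, $\Phi\circ\partial^{\wP}=\partial^{\wQ}\circ\Phi$ on every basis element, so $\Phi$ is an isomorphism of complexes; together with the compatibility of $\Phi$ with the augmentations this yields $\wP_{\ge 1}\cong\wQ_\bullet=\cF_{X_A}$, as claimed. The main obstacle is clearly the sign identity of the third paragraph: the gradient path $\cP_r$ can be long (of length of order $2r(q-r)+2(r-1)$), so the position bookkeeping has to be carried out with care and the degenerate steps that the construction of Proposition~\ref{gradient path} omits must be reinstated when reading off incidence numbers. One could instead try to force the remaining signs by inducting on $q$ and using $\partial\circ\partial=0$ in $\wQ_\bullet$ (valid by Batzies--Welker) together with the already-matched $\gd$-part, but this essentially recreates the case analysis of Proposition~\ref{diff} and is no shorter.
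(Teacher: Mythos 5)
Your proposal is correct and takes essentially the same route as the paper: the same bijection $e(\gs)\leftrightarrow e(\wF_\gs,\wm_\gs)$, the same matching of the $\gd$- and $\gd'$-parts of the two differentials via Propositions~\ref{gradient path} and~\ref{uniqueness}, and the same reduction to the sign identity $(-1)^q m(\cP)=(-1)^r$ for the unique gradient path. The only difference is one of detail: where you sketch the sign bookkeeping as a telescoping over zig-zags, the paper finishes it by classifying the peaks into two types (type $\cA$ contributing $+1$, type $\cB$ contributing $(-1)^{q-r}$) and counting exactly $q-r$ peaks of type $\cB$, giving $m(\cP)=(-1)^{(q-r)^2}=(-1)^{q-r}$.
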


\begin{proof}
Recall that there is the one-to-one correspondence between the critical cells $\gs \subset G(\wI)$ and 
the admissible pairs $(\wF_\gs, \wm_\gs)$. Hence, for each $q$, we have the isomorphism $\wQ_q \to \wP_q$ induced 
by $e(\gs) \longmapsto e(\wF_\gs, \wm_\gs)$. We will show that they induce a chain isomorphism $\wQ_\bullet \cong \wP_\bullet$.

Note that $\gs \setminus  \{ (\wm_\gs)_{\< i_r \>} \}$ corresponds 
to $((\wF_\gs)_r, \wm_\gs)$, and $\gt \subset G(\wI)$ appears in the second $\sum$ of \eqref{cell diff} 
if and only if $(\wF_{\gt}, \wm_{\gt})=((\wF_\gs)_r, (\wm_\gs)_{\<i_r\>})$ 
for some $r \in B(\wF_\gs, \wm_\gs)$ by Proposition~\ref{gradient path}. 
Hence, if we forget ``coefficients", the differential map of $\wQ_\bullet$ and 
that of $\wP_\bullet$ are compatible with  the maps $e(\gs) \longmapsto e(\wF_\gs, \wm_\gs)$. 
 So it remains to check the equality of the coefficients.  

To define the coefficient $m(\cP)\in \ZZ$ used in \eqref{cell diff}, we 
fix an orientation of the simplex $X$. 
Identifying  $X$ with the power set $2^{G(\wI)}$, we set 
$[\gs:\gs'] = (-1)^r$ for $\gs = \{\wm_1,\dots,\wm_{q+1}\}$
and $\gs' = \gs \setminus \{\wm_r\}$
with $\wm_1 \sqsubset \wm_2 \sqsubset \cdots \sqsubset \wm_{q+1}$.
Then $m(\cP) \in \ZZ$ for a gradient path $\cP=\gs_0 \to \gs_1 \to  \cdots \to \gs_l$ is defined by 
$$m(\cP) = \prod_{\stackrel{1 \le i \le l}{(\gs_i \to \gs_{i-1}) \not \in A}}[\gs_{i-1} : \gs_i] \ \times 
\prod_{\stackrel{1 \le i \le l}{(\gs_i \to \gs_{i-1}) \in A}}(-[\gs_i: \gs_{i-1}]).$$

Let $\gs,\gt \subset G(\wI)$ be critical cells with $(\wF_{\gt}, \wm_{\gt})=((\wF_\gs)_r, (\wm_\gs)_{\<i_r\>})$ 
for some $r \in B(\wF_\gs, \wm_\gs)$, and $\cP: \gs \setminus \{ \, \wm_\gs \, \} \to \cdots \to 
\gt$ the gradient path constructed in the proof of 
Propositions~\ref{gradient path}. By Proposition~\ref{uniqueness}, 
$\cP$ is the unique one connecting $\gs \setminus \{ \, \wm_\gs \, \}$ with $\gt$. 
It suffices to show that $(-1)^q m(\cP) = (-1)^r$.  

The path $\cP$ is a succession of short  paths
\begin{align}
\gs^{(1)} \to \gs^{(2)} \to \gs^{(3)} \label{eq:unit_path}
\end{align}
such that $\# \gs^{(1)} = \# \gs^{(3)} = q$ and $\# \gs^{(2)} = q + 1$.
For convenience, we refer to a path of type~\eqref{eq:unit_path} 
as a {\em peak}.
According to the construction of $\cP$,
the peaks $\gs^{(1)} \to \gs^{(2)} \to \gs^{(3)}$ appearing in $\cP$
are classified into the following two types $\cA$ and $\cB$.
The type $\cA$ is such that $\gs^{(1)} \to \gs^{(2)}$ is given by insertion
an element into the $s$-th position of $\gs^{(1)}$ and $\gs^{(2)} \to \gs^{(3)}$
by deletion of the $(s+1)$-th element of $\gs^{(2)}$, for some $1 \le s \le r-1$.
To be more explicit, set $\gs^{(1)} := \{\wn_1,\dots,\wn_q\}$
with $\wn_1 \sqsubset \wn_2 \sqsubset \cdots \sqsubset \wn_q$. 
A peak of type $\cA$ is of the following form
\begin{align*}
\{\wn_1,\dots,\wn_q\} &\to \{\wn_1,\dots,\wn_{s-1},\wn_s',\wn_s,\cdots, \wn_q\} \\
&\to \{\wn_1,\dots,\wn_{s-1},\wn_s',\wn_{s+1},\cdots, \wn_q\},
\end{align*}
where $\wn_{s-1} \sqsubset \wn_s' \sqsubset \wn_s$.
The type $\cB$ is such that $\gs^{(1)} \to \gs^{(2)}$ is given by insertion
an element into the $r$-th position of $\gs^{(1)}$ and $\gs^{(2)} \to \gs^{(3)}$
by deletion of the $(q+1)$-th element of $\gs^{(2)}$. That is, 
\begin{align*}
\{\wn_1,\dots,\wn_q\} &\to \{\wn_1,\dots,\wn_{r-1},\wn_r',\wn_r,\cdots, \wn_q\} \\
&\to \{\wn_1,\dots,\wn_{r-1},\wn_r',\wn_r,\cdots, \wn_{q-1}\},
\end{align*}
where $\wn_{r-1} \sqsubset \wn_r' \sqsubset \wn_r$

Let $\cP' = (\gs^{(1)} \to \gs^{(2)} \to \gs^{(3)})$ be a peak of type $\cA$ or $\cB$. 
By the choice of orientation of $X$,
$$
m(\cP') = -[\gs^{(2)}:\gs^{(1)}][\gs^{(2)}:\gs^{(3)}] =\begin{cases}
1 & \text{if $ \cP' $ is of type $ \cA $} \\
(-1)^{q-r} & \text{if $ \cP' $ is of type $ \cB $.}
\end{cases}
$$
Hence it follows that $m(\cP) = (-1)^{l(q-r)}$, where $l$ is the number of the peaks of type $\cB$
which appears in $\cP$.

To count the number of the peaks of type $\cB$, we use the framework of the proof of Proposition~\ref{gradient path}.
It is clear that the paths constructed in Case 1 and Step 2 of Case 2 are successions of peaks of type $\cA$.
In Step 1 of Case 2, an easy observation shows that
the path is given by $(q-r)$-times repetition of construction of a path
$$
\cP_1\cdots \cP_{r-1} \cP_r,
$$
where $\cP_i$ is a peak of type $\cA$ for $i < r$, and $\cP_r$ is a peak of type $\cB$.
Summing up these observation, we have $l = q-r$ (Recall that $q = r$ in Case 1).
Therefore it follows that
$$
(-1)^qm(\cP) = (-1)^q \cdot (-1)^{(q-r)^2} = (-1)^{q + (q-r)} = (-1)^r,
$$
as desired.
\end{proof}

The following is easy. 

\begin{cor}
The free resolution $\wP_\bullet \otimes_{\wS} \wS/(\Theta)$ (resp.  $\wP_\bullet \otimes_{\wS} \wS/(\Theta_a)$) of 
$S/I$ (resp. $T/I^{\gamma(a)}$) is also a cellular resolution supported by $X_A$. 
In particular, these resolutions are Batzies-Welker type. 
\end{cor}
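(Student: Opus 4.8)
The plan is to read the statement off from Theorem~\ref{BW=wP} together with the two corollaries that identify $\wP_\bullet\otimes_{\wS}\wS/(\Theta)$ and $\wP_\bullet\otimes_{\wS}\wS/(\Theta_a)$ as minimal free resolutions of $S/I$ and of $T/I^{\gamma(a)}$. By \eqref{lcm of sigma}, \eqref{cell diff} and Theorem~\ref{BW=wP}, the complex $\wP_{\ge 1}=\wQ_\bullet=\cF_{X_A}$ is nothing but the (oriented) cellular chain complex of $X_A$ homogenized by the monomial labeling $\gs\mapsto\lcm(\gs)$ of its cells; so the whole task is to show that applying the quotient maps $\pi\colon\wS\to\wS/(\Theta)\cong S$ ($x_{i,j}\mapsto x_i$) and $\pi_a\colon\wS\to\wS/(\Theta_a)\cong T$ ($x_{i,j}\mapsto x_{i+a_{j-1}}$) to $\wP_\bullet$ has the effect of merely relabeling each cell $\gs$ of $X_A$ by $\pi(\lcm(\gs))$, resp.\ $\pi_a(\lcm(\gs))$, while leaving $X_A$ and its incidence coefficients untouched.

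First I would note the elementary fact that $\pi$ and $\pi_a$ send each variable $x_{i,j}$ to a single variable, so that $\pi(v/u)=\pi(v)/\pi(u)$ and $\pi_a(v/u)=\pi_a(v)/\pi_a(u)$ whenever $u\mid v$ are monomials of $\wS$. Next, using \eqref{lcm of sigma} I would rewrite every nonzero coefficient of the differential of $\wP_\bullet$ as a ratio of cell labels: the coefficient $x_{i_r,j_r}$ in the $\gd$-part equals $\lcm(\gs)/\lcm(\gs\setminus\{(\wm_\gs)_{\<i_r\>}\})$, and the coefficient $x_{i_r,j_r}\wm_\gs/(\wm_\gs)_{\<i_r\>}$ in the $\gd'$-part (for $r\in B(\wF_\gs,\wm_\gs)$) equals $\lcm(\gs)/\lcm(\gt)$, where $\gt$ is the critical cell with $(\wF_\gt,\wm_\gt)=((\wF_\gs)_r,(\wm_\gs)_{\<i_r\>})$; in both cases the denominator divides the numerator by Lemma~\ref{lcm lemma}. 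Applying $\pi$ (resp.\ $\pi_a$) then turns $\wP_\bullet$ into the complex whose $q$-th term is $\bigoplus_{\gs\ \mathrm{critical},\ \#\gs=q+1}S\,e(\gs)$ (resp.\ over $T$), with multidegrees $\deg\pi(\lcm(\gs))$ and differential
$$
\partial\, e(\gs)=\sum_{r=1}^{q}(-1)^{r}\,\pi(x_{i_r,j_r})\,e(\gs\setminus\{(\wm_\gs)_{\<i_r\>}\})\ -\ (-1)^{q}\sum_{\gt}m(\cP)\,\frac{\pi(\lcm(\gs))}{\pi(\lcm(\gt))}\,e(\gt)
$$
(the second sum over the same $\gt$ as in \eqref{cell diff}), which is exactly the homogenization of the cellular chain complex of $X_A$ by $\gs\mapsto\pi(\lcm(\gs))$; the integers $m(\cP)$ and the incidence coefficients are unchanged, since they were already matched to those of $X_A$ in the proof of Theorem~\ref{BW=wP}. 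I would then check that $\gs\mapsto\pi(\lcm(\gs))$ is an admissible (order-compatible) labeling of $X_A$: if $\gt_A\subseteq\overline{\gs_A}$ there is a gradient path $\gs\leadsto\gt$ by \cite[Proposition~7.3]{BW}, hence $\lcm(\gt)\mid\lcm(\gs)$ by Lemma~\ref{lcm lemma} and so $\pi(\lcm(\gt))\mid\pi(\lcm(\gs))$. Since the resulting complex is a minimal free resolution of $S/I$ (resp.\ $T/I^{\gamma(a)}$) by the earlier corollary, it is a cellular resolution supported by $X_A$, and the same argument with $\pi_a$ finishes the first assertion.

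For the final sentence, I would observe that $X_A$ is itself a Morse complex produced by discrete Morse theory, namely the one obtained from the Taylor simplex $X=2^{G(\wI)}$ by the acyclic matching $A$ of \S5, so any cellular resolution supported by $X_A$ is of Batzies--Welker type. If one prefers a matching living on the Taylor complex of $I$ (resp.\ of $I^{\gamma(a)}$) itself, I would remark that acyclicity of $A$ depends only on the directed graph $G_X^A$ and not on the monomial labels, so the canonical bijection $G(\wI)\cong G(I)$, $\wm\leftrightarrow\m$ (resp.\ $G(\wI)\cong G(I^{\gamma(a)})$, $\wm\leftrightarrow\m^{\gamma(a)}$) transports $A$ to an acyclic matching with the same critical cells, for which the label of a critical cell $\gs$ is $\lcm\{\m\mid\wm\in\gs\}=\pi(\lcm(\gs))$ by Lemma~\ref{m_i_r}, in agreement with the previous paragraph. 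I expect the only point needing genuine care to be the bookkeeping identity $x_{i_r,j_r}\wm_\gs/(\wm_\gs)_{\<i_r\>}=\lcm(\gs)/\lcm(\gt)$ together with the compatibility of $\pi,\pi_a$ with division of monomials; everything else is immediate from \S5, which is why this corollary may fairly be called ``easy''.
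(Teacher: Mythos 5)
Your proposal is correct and is essentially the argument the paper has in mind (the paper offers no written proof beyond ``the following is easy''): since $\wP_{\ge 1}=\wQ_\bullet=\cF_{X_A}$ is the homogenized cellular chain complex of $X_A$ with labels $\lcm(\gs)$, and the specialization maps $\wS\to\wS/(\Theta)\cong S$ and $\wS\to\wS/(\Theta_a)\cong T$ send variables to variables, tensoring merely relabels each cell by the image of $\lcm(\gs)$ while exactness is already guaranteed by the earlier corollaries, so the resulting resolutions are cellular and supported by the Morse complex $X_A$. The only fragile point is your optional closing remark about transporting the matching $A$ to the Taylor complex of $I$ itself: label-independence of acyclicity does not by itself give the required homogeneity of the matched pairs with respect to the lcm-labels of $I$ (compare the paper's observation that $\BoX(-)$ does not preserve lcm-lattices), but this remark is not needed for the corollary.
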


\section{Final Remarks}
For the formal definition of the {\it regularity} of a (finite) CW complex, consult suitable textbooks. 
For our purpose, a regular CW complex is  a CW complex such that for all $i \ge 0$ the closure $\overline{\gs}$  
of any $i$-cell $\gs$ is homeomorphic to an $i$-dimensional closed ball, 
and $\overline{\gs} \setminus \gs$ is the closure of the union of some $(i-1)$-cells.  
Recently, Mermin \cite{Mer} (see also \cite{Cl}) showed that the Eliahou-Kervaire resolution of a Borel fixed ideal 
(more generally, a stable monomial ideal) is cellular, and the supporting CW complex is regular. 
In the previous section, we showed that our resolution $\wP_\bullet$ is cellular. However,  
the regularity of the supporting complex $X_A$ constructed by discrete Morse theory is not clear, while we have the following. 

\begin{prop}
Let $X_A$ be the CW complex constructed in the previous section
(recall that there is a one-to-one correspondence between critical subsets $\gs \subset G(\wI)$ and 
cells $\gs_A$ of $X_A$). Then the following hold. 
\begin{itemize}
\item[(1)]  If the closure of an $(i+1)$-cell $\gs_A$ contains an $(i-1)$-cell 
$\gt_A$, there are exactly two cells between them. 
\item[(2)] The incidence number $[\gs_A : \gs'_A]$ is $1, -1$ or $0$ for all $\gs_A,\gs'_A$.  
\end{itemize}
\end{prop}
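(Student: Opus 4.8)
The plan is to extract both statements from the explicit form of the differential of $\wP_\bullet$, using the identification $\wQ_\bullet=\wP_{\ge 1}$ of Theorem~\ref{BW=wP} and the fact that, for a cellular resolution supported on $X_A$, the coefficient of $e(\gs'_A)$ in $\partial e(\gs_A)$ equals $[\gs_A:\gs'_A]$ times the monomial $\lcm(\gs)/\lcm(\gs')$. For (2): under the correspondences $\gs_A\leftrightarrow(\wF,\wm)$ and $\gs'_A\leftrightarrow(\wF_r,\wm)$ or $(\wF_r,\wm_{\<i_r\>})$ (with $r\in B(\wF,\wm)$ in the second case), the formula defining $\partial$ on $\wP_\bullet$ assigns to these base elements the coefficients $(-1)^r x_{i_r,j_r}$ and $-(-1)^r(x_{i_r,j_r}\wm)/\wm_{\<i_r\>}$ respectively, the latter being a genuine monomial by the remark after Lemma~\ref{m_i_r}; so each coefficient is $\pm1$ times $\lcm(\gs)/\lcm(\gs')$. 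The only thing to verify is that no base element occurs twice in $\partial e(\wF,\wm)$, which would produce a scalar $\pm2$: but $\wF_r$ determines $r$, and $\wm_{\<i_r\>}\ne\wm$ because $\m\in G(I)$ forces $\m_{\<i_r\>}$ and $\m$ to differ in the exponent of $x_{i_r}$ (Lemma~\ref{m_i_r}), so the base elements listed are pairwise distinct. Hence $[\gs_A:\gs'_A]\in\{-1,0,1\}$.

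For (1), recall (\cite[Proposition~7.3]{BW}, Proposition~\ref{gradient path}, and Theorem~\ref{BW=wP}) that the closure of $\gs_A\leftrightarrow(\wF,\wm)$ contains $\gs'_A\leftrightarrow(\wF',\wm')$ exactly when $(\wF',\wm')\le(\wF,\wm)$ in the face poset of $X_A$, and that $(\wF,\wm)$ covers $(\wF',\wm')$ iff $\#\wF'=\#\wF-1$ and either $(\wF',\wm')=(\wF_r,\wm)$ for some $r$ (always admissible by Lemma~\ref{lem for adm}(i)) or $(\wF',\wm')=(\wF_r,\wm_{\<i_r\>})$ for some $r\in B(\wF,\wm)$. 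So (1) reduces to the combinatorial statement: if $(\wF'',\wm'')<(\wF,\wm)$ with $\#\wF''=\#\wF-2$, then there are exactly two admissible pairs $(\wF',\wm')$ with $(\wF,\wm)\gtrdot(\wF',\wm')\gtrdot(\wF'',\wm'')$. Since each covering relation removes one pair from $\wF$, we get $\wF''=\wF\setminus\{(i_r,j_r),(i_s,j_s)\}$ for a unique $r<s$, the intermediate $\wF'$ is $\wF_r$ or $\wF_s$, and $\wm''$ is obtained from $\wm$ by applying at most two of the operations $(-)_{\<i_r\>}$, $(-)_{\<i_s\>}$; thus $\wm''$ lies in $\{\wm,\ \wm_{\<i_r\>},\ \wm_{\<i_s\>},\ (\wm_{\<i_r\>})_{\<i_s\>},\ (\wm_{\<i_s\>})_{\<i_r\>}\}$.

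The remaining work is to run through the values of $\wm''$. Throughout one has $j_r\le j_s$ (Lemma~\ref{rem for adm}(i)), and one uses the identities of Lemma~\ref{lem for adm}(iv),(v) — if $j_r<j_s$ the two diagonal monomials coincide, if $j_r=j_s$ then $(\wm_{\<i_s\>})_{\<i_r\>}=\wm_{\<i_r\>}$ — together with the criterion of Lemma~\ref{lem for adm}(iii) and Lemma~\ref{m_i_r} to decide membership in $B(\wF_r,\wm')$ and $B(\wF_s,\wm')$. When $\wm''=\wm$ the two chains are the ``type (a)'' ones through $\wF_r$ and through $\wF_s$, and no other chain gives $\wm''=\wm$ because a ``type (b)'' move sends $\m$ to $\m_{\<i\>}\succ\m$ in the lexicographic order (Lemma~\ref{m_i_r}); so there are exactly two. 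For the other values of $\wm''$ one of the two steps is a $g$-move, and one checks case by case that exactly two of the a priori candidate chains survive the admissibility tests. Finally, the hypothesis that $\overline{\gs_A}$ contains $\gs'_A$ produces at least one such chain — a gradient path $\gs\leadsto\gt$ must pass through a critical $i$-cell, which is immediate when its first step deletes some $(\wm_\gs)_{\<i_r\>}$, and otherwise follows by inspecting the path constructed in the proof of Proposition~\ref{gradient path} — and then $\partial\circ\partial=0$ together with (2) forces the number of intermediate cells to be even, hence equal to $2$.

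The main obstacle is precisely the case analysis in (1): because $B(\wF',\wm')$ genuinely depends on both $\wF'$ and $\wm'$, a candidate chain through $\wF_r$ may fail — for instance when $s=q$ and $\nu(\m_{\<i_r\>})\le i_q$, so that $r\notin B(\wF,\wm)$ — while a compensating chain, say the one that performs the $g$-move at $i_q$ first (legitimate because $q\in B(\wF,\wm)$ always, by Lemma~\ref{lem for adm}(ii)) and then collapses via the convention $(\wm_{\<i_r\>})_{\<i_q\>}=\wm_{\<i_r\>}$, appears in its place. Showing that these replacements always restore the count to exactly two, uniformly over the sub-cases $j_r<j_s$ versus $j_r=j_s$, $s<q$ versus $s=q$, and the various positions of $r$ and $s$ relative to $B(\wF,\wm)$, is where essentially all of the effort goes; the identities collected in Lemma~\ref{lem for adm} are exactly what make the accounting balance.
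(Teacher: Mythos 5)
Your part (2) is fine and is essentially the paper's own argument: via Theorem~\ref{BW=wP} the incidence number $[\gs_A:\gs'_A]$ is read off from the coefficient of $e(\wF_{\gs'},\wm_{\gs'})$ in $\partial e(\wF_{\gs},\wm_{\gs})$, and each coefficient of $\wP_\bullet$ is either absent or $\pm 1$ times the monomial $\lcm(\gs)/\lcm(\gs')$ (with no repeated basis elements), which is also what Proposition~\ref{uniqueness} guarantees on the Morse-theoretic side.

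For (1) there is a genuine gap: the decisive step --- ``one checks case by case that exactly two of the a priori candidate chains survive the admissibility tests'' --- is never carried out, and you yourself say in the final paragraph that this is where essentially all of the effort goes. That case-by-case verification \emph{is} the content of statement (1); without it you have a plan, not a proof. The paper does not redo this analysis because it does not need to: it is exactly what is done in \S 3, in the proof of Proposition~\ref{diff}, where $\partial\circ\partial(e(\wF,\wm))$ is expanded for every configuration of $r,s$ relative to $B(\wF,\wm)$, $B(\wF_r,\wm_{\<i_r\>})$, etc.\ (using Lemma~\ref{lem for adm} and Lemma~\ref{case 3}), and in each case the surviving terms $e(\wF_{r,s},-)$ are shown to occur in exactly cancelling pairs; i.e.\ every $(i-1)$-cell reached from $(\wF,\wm)$ by two covering steps is reached through exactly two intermediate $i$-cells. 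Your proposal should either invoke that completed analysis or actually perform the analogous one over the five possible values of $\wm''$; as written, the central claim is asserted.

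The two devices you offer in its place do not close the gap. The parity argument (``$\partial\circ\partial=0$ together with (2) forces the number of intermediate cells to be even, hence equal to $2$'') only yields ``even and at least two''; to get ``exactly two'' you still need the upper bound that at most one intermediate cell passes through $\wF_r$ and at most one through $\wF_s$ (for instance because $\wm$, $\wm_{\<i_r\>}$, $\wm_{\<i_s\>}$ and $(\wm_{\<i_r\>})_{\<i_s\>}$ are pairwise distinct by Lemma~\ref{m_i_r}, so $(\wF_r,\wm)$ and $(\wF_r,\wm_{\<i_r\>})$ never cover the same pair) --- and that bound is again part of the omitted analysis. Moreover, the existence step, that a closure relation $\gt_A\subset\overline{\gs_A}$ in codimension two produces at least one intermediate critical cell, is justified only by ``inspecting the path constructed in the proof of Proposition~\ref{gradient path}''; but that proposition concerns gradient paths between critical cells of \emph{consecutive} dimensions and one specific constructed path, so it says nothing about an arbitrary gradient path descending by two, and in a general non-regular CW complex an $(i+1)$-cell can contain an $(i-1)$-cell in its closure with no $i$-cell in between. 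This point therefore also needs an argument rather than a pointer.
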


If $X_A$ is regular, the conditions of the above proposition hold obviously.

\begin{proof}
(1) Let $\gs'_A$ be an $i$-cell of $X_A$. 
As shown in the previous section, the closure of $\gs_A$ contains $\gs_A'$ if and only if 
$e(\wF_{\gs'}, \wm_{\gs'})$ appears in $\partial(e(\wF_{\gs}, \wm_{\gs}))$, where 
$\partial$ is the differential of $\wP_\bullet$.   
Hence the assertion follows from the analysis of $\partial$ given in \S3. 

(2) Since $[\gs_A:\gs'_A]$ coincides with the coefficient in $\partial$, the assertion is also clear. 
In other words, the assertion is a consequence of Proposition~\ref{uniqueness}. 
\end{proof}

Recall that if a Borel fixed ideal $I$ is generated in one degree then our resolution $\wP_\bullet$ of 
$\BoX(I)$ is equivalent to the resolution of Nagel and Reiner \cite{NR}. Their resolution is cellular, and the supporting 
CW complex is polytopal, hence is regular.  
Since their CW complex is contractible as shown in the proof of \cite[Theorem~3.13]{NR}, 
it can be taken as $X_A$.
Anyway, if $I$ is generated in one degree, the Nagel-Reiner resolution of $I$ is Batzies-Welker type, and 
the CW complex $X_A$ is regular.

\begin{exmp}\label{final remark}
Set $I :=(x_1^2, x_1x_2^2, x_1x_2x_3, x_1x_2x_4,x_1x_3^2,x_1x_3x_4)$. 
Then
$$
\wI = (x_{1,1}x_{1,2},x_{1,1}x_{2,2}x_{2,3}, x_{1,1}x_{2,2}x_{3,3}, x_{1,1}x_{2,2}x_{4,3}, x_{1,1}x_{3,2}x_{3,3}, x_{1,1}x_{3,2}x_{4,3}),
$$
and easy computation 
shows that the CW complex $X_A$, which supports our resolutions $\wP_\bullet$ of $\wS/\wI$ and  
$\wP_\bullet \otimes_{\wS} \wS/(\Theta)$ of $S/I$, is the one illustrated in Figure 3. 
The complex consists of a square pyramid and a tetrahedron glued along trigonal faces of each.  
For a Borel fixed ideal generated in one degree,  any face of the Nagel-Reiner CW complex is a product of several simplices 
(of positive dimension). 
Hence a square pyramid can not appear in the case of Nagel and Reiner.

\begin{figure}[htbp]
\begin{minipage}{.48\textwidth}
\begin{center}
\includegraphics[height=4cm, width=6.3cm]{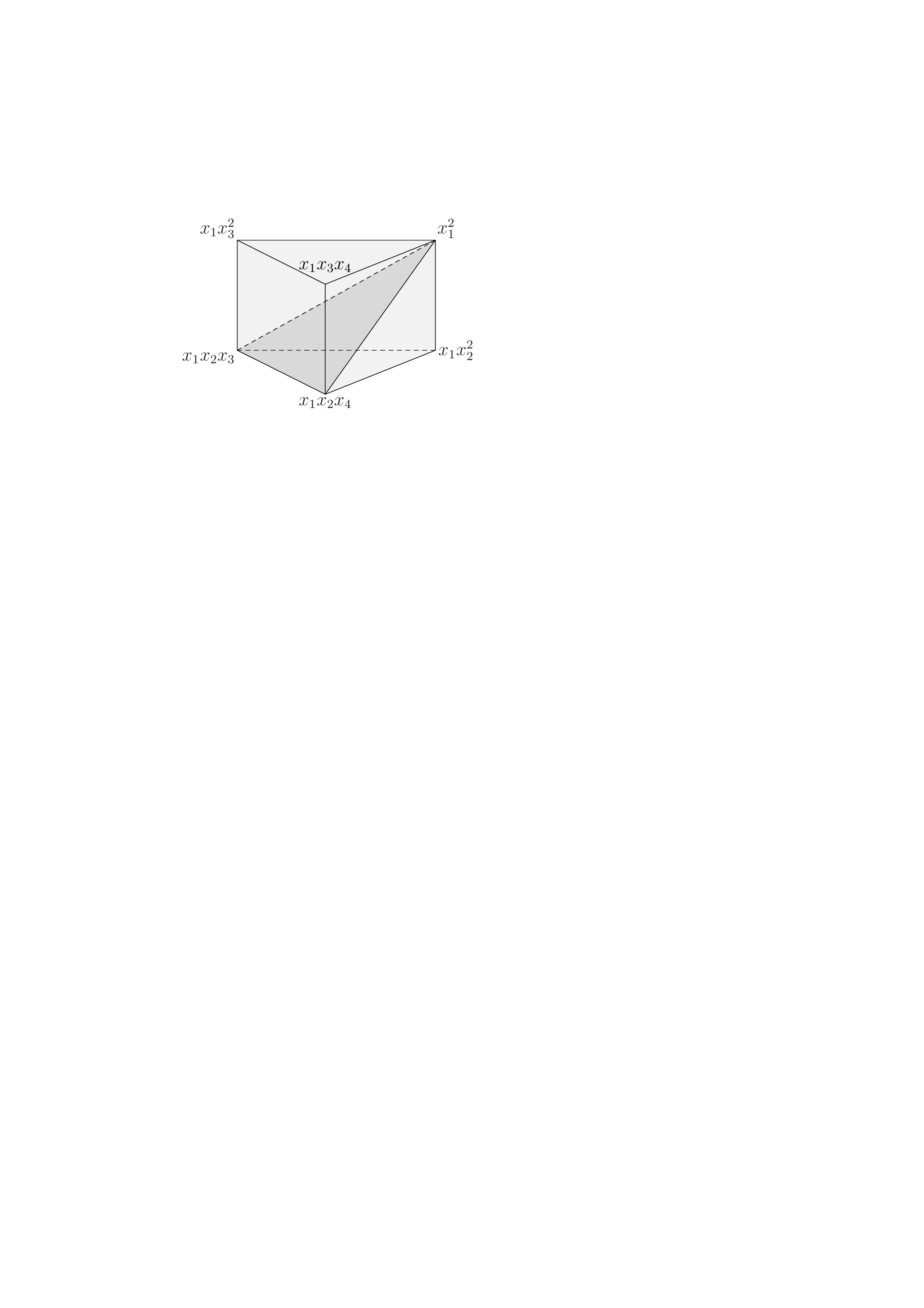}
\end{center}
\caption{}
\end{minipage}
\begin{minipage}{.48\textwidth}
\begin{center}
\includegraphics[height=4.2cm, width=6.5cm]{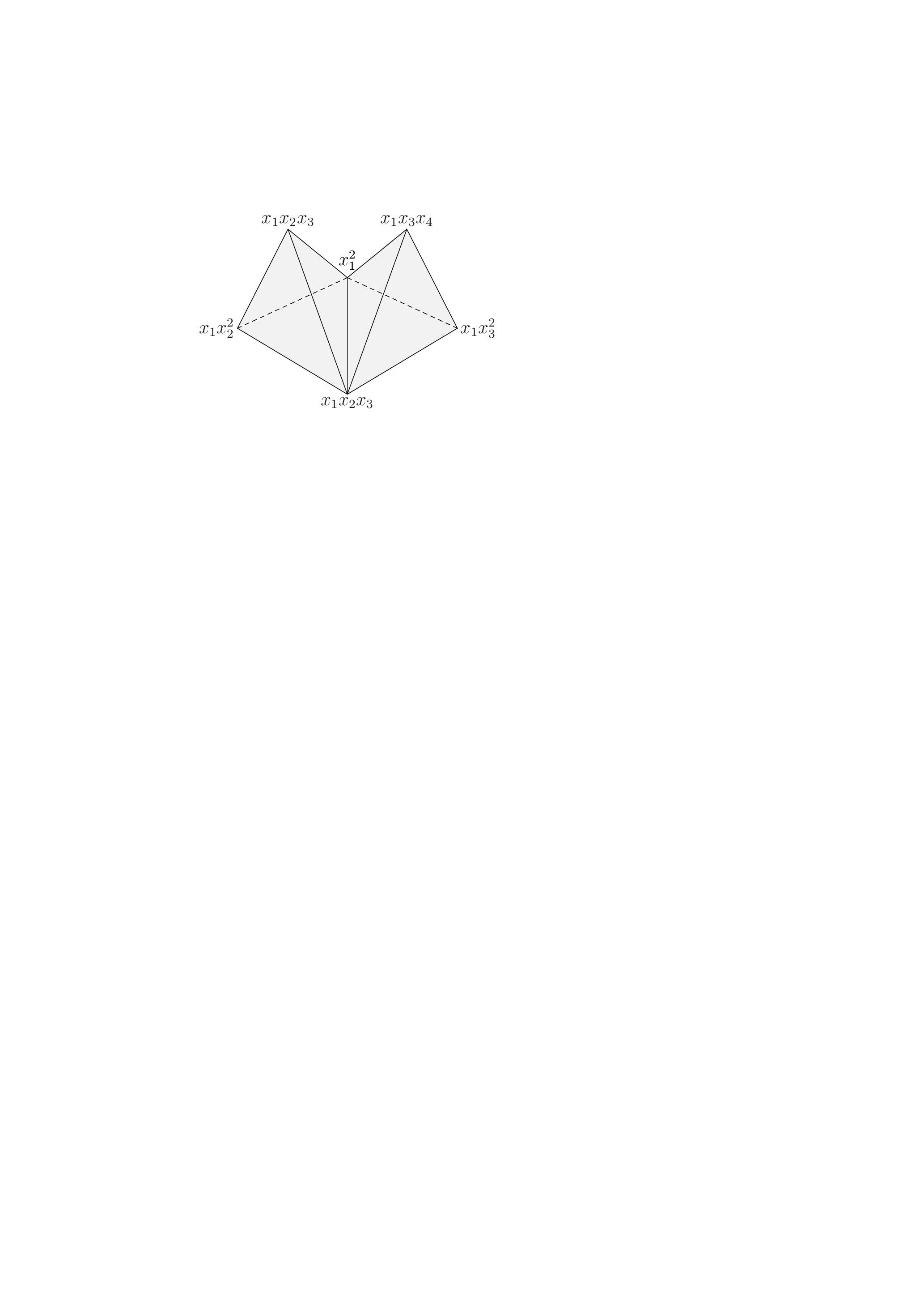}
\end{center}
\caption{}
\end{minipage}
\end{figure}

We remark that the Eliahou-Kervaire resolution of $I$ is supported by 
the CW complex illustrated in Figure 4. 
This complex consists of two tetrahedrons glued along edges of each. 
These figures show visually that the description of the  Eliahou-Kervaire resolution and 
that of ours are really different.
\end{exmp}

\begin{ques}
Is the CW complex $X_A$ is regular for a general Borel fixed ideal? 
\end{ques}

\section*{Acknowledgement}
We are grateful to Professor Volkmar Welker and the referees for valuable comments.

\end{document}